
\documentclass[10pt]{article}
\usepackage{amsfonts,amssymb,amsbsy,amsmath,amsthm,enumerate,verbatim}
\usepackage{mathrsfs}


\usepackage[colorlinks=true]{hyperref}
\hypersetup{urlcolor=blue,citecolor=red}

\newtheorem{theorem}{Theorem}[section]

\newtheorem{lemma}[theorem]{Lemma}
\newtheorem{definition}[theorem]{Definition}

\theoremstyle{definition}
\newtheorem{remark}[theorem]{Remark}

\newcommand{\bel}{\begin{equation} \label}
\newcommand{\ee}{\end{equation}}

\newcommand{\R}{{\mathbb R}}
\newcommand{\N}{{\mathbb N}}

\newcommand{\Z}{{\mathbb Z}}

\let\epsilon=\varepsilon
\let\phi=\varphi
\def\dans{\longrightarrow}

\def\beq{\begin{equation}}
\def\eeq{\end{equation}}
\newcommand{\bea}{\begin{eqnarray}}
\newcommand{\eea}{\end{eqnarray}}
\newcommand{\beas}{\begin{eqnarray*}}
\newcommand{\eeas}{\end{eqnarray*}}

{

\providecommand{\norm}[1]{\left\lVert#1\right\rVert}

\begin{document}

\title{Uniqueness and stability results for an \\
 inverse spectral problem in a \\ 
 periodic waveguide}

\author{O. Kavian\footnote{Laboratoire de Math\'ematiques de Versailles; UMR 8100; universit\'e de Versailles Saint-Quentin; 45 avenue des Etats Unis; 78035 Versailles cedex; France.
e-mail: {\tt kavian@math.uvsq.fr}}, 
Y. Kian\footnote{Aix-Marseille Universit\'e; CNRS, CPT UMR 7332; 13288 Marseille; and Universit\'e de Toulon; CNRS, CPT UMR 7332; 83957 La Garde; France.
e-mail: {\tt yavar.kian@univ-amu.fr}}
\& E. Soccorsi\footnote{Aix-Marseille Universit\'e; CNRS, CPT UMR 7332; 13288 Marseille; and Universit\'e de Toulon; CNRS, CPT UMR 7332; 83957 La Garde; France.
e-mail: {\tt eric.soccorsi@univ-amu.fr}}
} 

\maketitle

\begin{abstract}
\noindent Let $\Omega =\omega\times{\Bbb R}$ where $\omega\subset {\Bbb R}^2$ be a bounded domain, and $V : \Omega \dans {\Bbb R}$ a bounded potential which is $2\pi$-periodic in the variable $x_{3}\in {\Bbb R}$. We study the inverse problem consisting in the determination of $V$, through the boundary spectral data of the operator $u\mapsto Au := -\Delta u + Vu$, acting on $L^2(\omega\times(0,2\pi))$, with quasi-periodic and Dirichlet boundary conditions. More precisely we show that if for two potentials $V_{1}$ and $V_{2}$ we denote by $(\lambda_{1,k})_{k}$ and $(\lambda_{2,k})_{k}$ the eigenvalues associated to the operators $A_{1}$ and $A_{2}$ (that is the operator $A$ with $V := V_{1}$ or $V:=V_{2}$), then if $\lambda_{1,k} - \lambda_{2,k} \to 0$ as $k \to \infty$ we have that $V_{1} \equiv V_{2}$, provided one knows also that $\sum_{k\geq 1}\|\psi_{1,k} - \psi_{2,k}\|_{L^2(\partial\omega\times[0,2\pi])}^2 < \infty$, where $\psi_{m,k} := \partial\phi_{m,k}/\partial{\bf n}$. We establish also an optimal Lipschitz stability estimate. The arguments developed here may be applied to other spectral inverse problems, and similar results can be obtained.

\end{abstract}

\noindent{\bf Keywords: }{
Inverse spectral problem, stability, uniqueness, periodic waveguide, Schr\"odinger operator, periodic potential\\

\noindent {\bf 2010 AMS Subject Classification: } 
Primary: 35R30, 35J10; 
secondary: 35P99}


\section{Introduction}
\label{sec-intro}
\setcounter{equation}{0}

In the present paper we study two related inverse spectral problems in which a potential is identified through an incomplete boundary spectral data.
\medskip

Let $\omega \subset {\Bbb R}^2$ be a bounded   domain. On the one hand set \begin{equation}\label{eq:Def-Y-Gamma}
Y := \omega\times(0,2\pi)\qquad\mbox{and}\qquad
\Gamma := \partial\omega\times[0,2\pi],
\end{equation} 
and on the other hand consider an infinite waveguide $\Omega$ with
\begin{equation}\label{eq:Def-Omega}
\Omega :=\omega \times \R \qquad\mbox{and}\qquad
\partial\Omega = \partial\omega\times {\Bbb R}.
\end{equation} 
We may assume, without loss of generality, that the cross section $\omega$ of the waveguide contains the origin $0_{\R^2}$ of $\R^2$. For simplicity we assume that $\omega $ is  a $\mathcal C^{1,1}$  domain. Nevertheless, with some additional arguments most of the results of this paper (Theorems \ref{thm:Uniqueness-QP}--\ref{thm:Stability}) can be treated when $\omega$ is assumed to be only a Lipschitz domain.
For the sake of brevity of notations, we write $x=(x',x_3)$ with $x'=(x_1,x_2) \in \omega$ for every $x=(x_1,x_2,x_3) \in \Omega$. 
\medskip


The main problem we study, and whose solution is a consequence of a result presented a few lines below, concerns an inverse spectral problem in a waveguide given by $\Omega$. We consider a real valued electric potential $V \in L^\infty(\Omega;{\Bbb R})$, which is $2 \pi$-periodic with respect to the infinite variable $x_3$. Namely, we assume that $V$ satisfies
\bel{eq:V-per}
V(x',x_3+2\pi)=V(x',x_3),\quad \forall\, x_3 \in \R,
\ee
and then we define the self-adjoint operator $(A,D(A))$ acting in $L^2(\Omega)$ by
\begin{equation}\label{eq:Def-A-1}
Au :=-\Delta u + Vu, \quad\mbox{for }\; u \in D(A)
\end{equation}
with its domain
\begin{equation}\label{eq:Def-Dom-A-1}
D(A) :=\left\{u \in H^1_0(\Omega) \; ; \; -\Delta u + Vu \in L^2(\Omega) \right\}.
\end{equation}
We are interested in the problem of determining $V$ from the partial knowledge of the spectral data associated with $A$. However, the operator $(A,D(A))$ being self-adjoint and its resolvent not being compact, it may have a continuous spectrum contained in an interval of type $[\lambda_{*},+\infty)$: thus in the first place one should state precisely what is meant by an inverse spectral problem. To make this statement more precise, we are going to recall the definition of the (full) spectral data associated with the operator $A$, but before doing so we state another result closely related to the above problem.

This result concerns the following inverse spectral problem: let $Y$ be as in \eqref{eq:Def-Y-Gamma} and  consider a real valued potential $V \in L^\infty(Y)$ and, for a given fixed $\theta \in [0,2\pi)$, let $(\lambda_{k}(\theta),\phi_{\theta,k})_{k \geq 1}$ be the eigenvalues and normalized eigenfunctions of the realization of the operator $-\Delta + V$ with quasi-periodic and Dirichlet boundary conditions, more precisely those eigenvalues and eigenfunctions given by
\begin{equation}\label{eq:Eigen-Pb-0}
\left\{
\begin{array}{rcll}
-\Delta \phi_{\theta,k} + V\phi_{\theta,k} & = & \lambda_{k}(\theta)\phi_{\theta,k} & \mbox{in }\, Y,\\
\phi_{\theta,k}(\sigma) & = & 0,& \sigma\in \Gamma,\\
\phi_{\theta,k}(x',2\pi) & = & {\rm e}^{{\rm i}\theta} \phi_{\theta,k}(x',0) , & x'\in\omega,\\
\partial_{3} \phi_{\theta,k}(x',2\pi) & = & {\rm e}^{{\rm i}\theta}\partial_{3} \phi_{\theta,k}(x',0) , & x'\in\omega.
\end{array}
\right.
\end{equation}
Then we show that if $N \geq 1$ is a given integer, knowledge of 
$$\lambda_{j}(\theta), \; 1_{\Gamma}{\partial\phi_{\theta,j} \over \partial{\bf n}}
\quad\mbox{for }\, j \geq N + 1,$$
allows us to identify the potential $V$ in $Y$. The novelty of this result, in contrast with analogous results in the literature (cf. mainly A.Nachman, J.~Sylvester \& G.~Uhlmann \cite{NSU}, H.~Isozaki \cite{Isozaki}), is that the normal derivatives  of the eigenfunctions, $\partial\phi_{\theta,j}/\partial{\bf n}$, are assumed to be known only on the part $\Gamma = \partial\omega\times[0,2\pi]$ of the boundary $\partial Y$ of the domain $Y$. More precisely we show the following:

\begin{theorem}
\label{thm:Uniqueness-QP} 
Let $Y$ and $\Gamma$ be as in \eqref{eq:Def-Y-Gamma}, $\theta \in [0,2\pi)$ and, for $m = 1, 2$, let $V_m \in L^\infty(Y;{\Bbb R})$. We denote by $(\lambda_{m,k}(\theta),\phi_{m,\theta,k})_{k \geq 1}$ the eigenvalues and normalized eigenfunctions given by the eigenvalue problem \eqref{eq:Eigen-Pb-0} where $V := V_{m}$, for $m = 1$ or $m = 2$, and denote $\psi_{m,\theta,k} := \partial\phi_{m,\theta,k}/\partial{\bf n}$.
Assume that for an integer $N \geq 1$ we have
\begin{equation}\label{eq:Cnd-BSD-0} 
\forall \, k \geq N + 1,\quad \lambda_{1,k}(\theta) = \lambda_{2,k}(\theta),
\quad\mbox{and}\quad
\psi_{1,\theta,k} = \psi_{2,\theta,k}
\quad\mbox{on }\, \Gamma.
\end{equation}
Then we have $V_1 \equiv V_2$.
\end{theorem}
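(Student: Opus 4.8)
The plan is to reduce the inverse spectral problem to a Borg–Levinson type argument, exploiting the fact that dropping finitely many eigenpairs is harmless. Fix $\lambda$ outside the (discrete) spectra of both operators $A_m(\theta) := -\Delta + V_m$ with the boundary conditions in \eqref{eq:Eigen-Pb-0}. For boundary data $f$ supported on $\Gamma$, let $u_{m} = u_{m}(\lambda,f)$ solve $(-\Delta + V_m - \lambda)u_m = 0$ in $Y$ with the quasi-periodic conditions at $x_3 = 0, 2\pi$ and $u_m = f$ on $\Gamma$. The associated (partial) Dirichlet-to-Neumann map is $\Lambda_m(\lambda) : f \mapsto \partial u_m/\partial{\bf n}\big|_{\Gamma}$. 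Using the spectral decomposition of $A_m(\theta)$ one writes, for $f$ smooth and supported on $\Gamma$,
\begin{equation}\label{eq:DtN-series}
\left\langle \Lambda_m(\lambda) f, g \right\rangle_{\Gamma}
= \sum_{k \geq 1} \frac{1}{\lambda_{m,k}(\theta) - \lambda}
\left\langle f, \psi_{m,\theta,k} \right\rangle_{\Gamma}
\overline{\left\langle g, \psi_{m,\theta,k} \right\rangle_{\Gamma}},
\end{equation}
the series converging for $\lambda$ in the resolvent set. The first key step is to show that the hypothesis \eqref{eq:Cnd-BSD-0} forces $\Lambda_1(\lambda) = \Lambda_2(\lambda)$ for all $\lambda$ in a common resolvent set: indeed, by \eqref{eq:DtN-series} the difference $\left\langle (\Lambda_1(\lambda) - \Lambda_2(\lambda))f, g\right\rangle_\Gamma$ is, thanks to \eqref{eq:Cnd-BSD-0}, a \emph{finite} sum over $k \leq N$ of terms $(\lambda_{m,k}(\theta)-\lambda)^{-1}\langle f,\psi_{m,\theta,k}\rangle_\Gamma\overline{\langle g,\psi_{m,\theta,k}\rangle_\Gamma}$; hence $\lambda \mapsto \left\langle (\Lambda_1(\lambda) - \Lambda_2(\lambda))f, g\right\rangle_\Gamma$ extends to a rational function of $\lambda$ which, being also the (holomorphic) difference of two DtN maps, has no poles, and therefore must decay as $|\lambda| \to \infty$ along, say, a ray in $\C$. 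A standard estimate on the full DtN map (or on $u_m$ via an energy/Carleman bound) gives that $\Lambda_m(\lambda) \to 0$ in the appropriate operator topology as $\lambda \to -\infty$ along the real axis; since a rational function tending to $0$ at infinity and having no poles is identically $0$, we conclude $\Lambda_1(\lambda) \equiv \Lambda_2(\lambda)$.

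The second step is to upgrade equality of the partial DtN maps to equality of the potentials. Here the non-standard feature is that the Neumann data are only known on $\Gamma = \partial\omega\times[0,2\pi]$, and nothing is prescribed on the "caps" $\omega\times\{0\}$ and $\omega\times\{2\pi\}$; but the quasi-periodic boundary conditions glue these caps together, so morally the problem lives on $\omega\times\mathbb{S}^1_\theta$, a manifold \emph{without} boundary in the $x_3$-direction, whose only boundary is $\Gamma$. Concretely I would pass to the waveguide picture: a quasi-periodic eigenfunction on $Y$ with parameter $\theta$ corresponds, via $u(x',x_3) = e^{i\theta x_3/(2\pi)} w(x',x_3)$ with $w$ genuinely $2\pi$-periodic, to a Floquet mode of the operator $A$ on $\Omega = \omega\times\R$; and the partial DtN map $\Lambda_m(\lambda)$ on $\Gamma$ is exactly (the $\theta$-fiber of) the lateral DtN map of $-\Delta + V_m - \lambda$ on the cylinder $\Omega$. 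Then one invokes the complex geometric optics (CGO) machinery of Nachman–Sylvester–Uhlmann / Isozaki, adapted to the cylindrical geometry: from $\Lambda_1(\lambda) = \Lambda_2(\lambda)$ one derives, via Alessandrini's identity, $\int_Y (V_1 - V_2)\, u_1 u_2 \, dx = 0$ for all solutions $u_m$ of $(-\Delta + V_m - \lambda)u_m = 0$; inserting CGO solutions $u_m \approx e^{\zeta\cdot x}(1 + \psi_m)$ with $\zeta\cdot\zeta = 0$ and $|\zeta|\to\infty$, the product $u_1 u_2$ produces the Fourier transform of $V_1 - V_2$ at an arbitrary frequency $\xi$, yielding $V_1 \equiv V_2$. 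The periodicity of $V_m$ in $x_3$ and the quasi-periodic conditions are compatible with this construction since the CGO parameter can be chosen with its $x_3$-component in the dual lattice (or one works on a large fundamental cell and uses density of the admissible $\xi$).

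The main obstacle is the first step — propagating equality of finitely-many-eigenpair data into equality of the full DtN map — and, within it, justifying the asymptotic decay $\Lambda_m(\lambda)\to 0$ as $\lambda\to-\infty$ together with the convergence and meromorphy of the series \eqref{eq:DtN-series} in the appropriate trace spaces ($H^{s}(\Gamma)$ scales), uniformly enough to conclude that the finite-rank rational remainder vanishes. The delicate points are: (i) the series \eqref{eq:DtN-series} converges only in a weak/distributional sense a priori, so one must work with sufficiently smooth $f, g$ and use elliptic regularity plus the completeness of the eigenfunctions $(\phi_{m,\theta,k})_k$; and (ii) one needs the trace estimate $\|\partial u_m/\partial{\bf n}\|_{H^{-1}(\Gamma)} \lesssim |\lambda|^{-1/2}\|f\|_{H^{1/2}(\Gamma)}$ (or similar), which follows from a resolvent bound $\|(A_m(\theta)-\lambda)^{-1}\| \lesssim 1/|\lambda|$ for $\lambda$ real and very negative, combined with a lifting of $f$ into $Y$. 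The CGO step of the second part is, by contrast, essentially a known argument once the geometry is set up correctly, the only care being that the boundary data on $\Gamma$ (not all of $\partial Y$) suffice — which works precisely because the caps carry matched quasi-periodic data and thus contribute nothing to Alessandrini's identity.
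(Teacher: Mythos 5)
There is a genuine gap, and it sits exactly where you predicted the main obstacle would be: your Step 1 does not work as stated. The assertion that $\lambda \mapsto \langle (\Lambda_1(\lambda)-\Lambda_2(\lambda))f,g\rangle_\Gamma$ ``has no poles'' because it is a difference of two DtN maps is false: each $\Lambda_m(\lambda)$ has simple poles at the eigenvalues $\lambda_{m,k}(\theta)$, and the hypothesis \eqref{eq:Cnd-BSD-0} only guarantees that the residues cancel in the difference for $k\geq N+1$. For $k\leq N$ the eigenvalues and the normal derivatives may differ, so the difference is a rational function of the form $\sum_{k\leq N}\bigl[(\lambda_{1,k}-\lambda)^{-1}\langle f,\psi_{1,k}\rangle\overline{\langle g,\psi_{1,k}\rangle}-(\lambda_{2,k}-\lambda)^{-1}\langle f,\psi_{2,k}\rangle\overline{\langle g,\psi_{2,k}\rangle}\bigr]$, which decays at infinity but generically \emph{does} have poles and is \emph{not} identically zero (compare $(\lambda-\lambda_{1,1})^{-1}-(\lambda-\lambda_{2,1})^{-1}$). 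In fact $\Lambda_1(\lambda)\equiv\Lambda_2(\lambda)$ is essentially equivalent to the conclusion $V_1\equiv V_2$, so any argument establishing it before identifying the potentials is circular. Consequently your Step 2, which applies Alessandrini's identity and CGO solutions at a \emph{fixed} $\lambda$ with $\zeta\cdot\zeta=0$, has no valid input.

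The repair — and this is what the paper does — is to give up exact equality of the DtN maps and use only the \emph{asymptotic} statement: the difference above is a finite sum of terms each of size $O(1/|\lambda|)$ (with numerators controlled uniformly via the $L^2$ bounds of Lemma \ref{lem:Estim-u-e-zeta}), hence $S_{\theta,V_1}(\lambda,\zeta_0,\zeta_1)-S_{\theta,V_2}(\lambda,\zeta_0,\zeta_1)\to0$ as $|\lambda|\to\infty$. One then needs Isozaki's version of the representation formula (Theorem \ref{thm:Isozaki}), in which $\widehat{V}(\xi)$ is obtained as the limit of $S_{\theta,V}(\lambda,\zeta_0,\zeta_1)$ with $\zeta_\ell\cdot\zeta_\ell=-\lambda$, ${\rm Im}\,\lambda\to+\infty$ and $\zeta_0+\zeta_1\to-{\rm i}\xi$, the resolvent remainder being $O(1/{\rm Im}\,\lambda)$; the choice of $\zeta_0,\zeta_1$ must respect the quasi-periodicity constraints \eqref{eq:Cnd-zeta}--\eqref{eq:Cnd-zeta-star} (Lemmas \ref{lem:Larger-than-zero}--\ref{lem:Equal-to-zero}). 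A secondary point you flag but do not resolve: the eigenfunction series for $\partial u_{m,\lambda}/\partial{\bf n}$ cannot be differentiated termwise, since the expansion of $u_{m,\lambda}$ converges only in $L^2(Y)$; the paper makes the finite difference of normal derivatives rigorous by subtracting $u_{m,\mu}$ and letting $\mu\to-\infty$ (Lemmas \ref{lem:v-lambda-mu} and \ref{lem:z-mu}). Your observation that the caps $\omega\times\{0,2\pi\}$ contribute nothing to the Green identities because of the matched quasi-periodic data is correct and is indeed how the paper justifies working with boundary data on $\Gamma$ only.
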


As we shall see below in Theorem \ref{thm:Uniqueness-Asymptotic}, in order to conclude that $V_{1} = V_{2}$, it is suffient to have 
\begin{equation}\label{eq:Ecart-lambda-psi}
\lim_{k\to\infty}(\lambda_{1,k}(\theta) - \lambda_{2,,k}(\theta)) = 0 \quad\mbox{and}\quad
\sum_{k\geq 1}\|\psi_{1,\theta,k} - \psi_{2,\theta,k}\|_{L^2(\Gamma)}^2 < \infty,
\end{equation}
which is a much weaker condition than \eqref{eq:Cnd-BSD-0}.

In order to explain and state our main result concerning waveguides, in the next subsection we recall what is meant by boundary spectral data for an unbounded domain such as $\Omega = \omega\times{\Bbb R}$.


\subsection{The spectral data of the operator $A$}\label{s-sec:spectraldata}
In order to make a precise statement about the structure of the spectral data of the operator $A$, we begin with a simple example in which an operator $L$ and its spectrum ${\rm sp}(L)$ are described in terms of a family of operators $(L_{\theta})_{\theta\in[0,2\pi)}$ and their spectrums $({\rm sp}(L_{\theta}))_{\theta\in[0,2\pi)}$.

One of the main tools in the analysis of operators with periodic coefficients, such as the one given by $A$, is the Floquet-Bloch-Gel'fand transform
$U$, defined for every $f \in \mathscr{S}(\R)$, by
$$
\mbox{for }\; x,\, \theta \, \in [0,2\pi), \qquad (U f)_\theta (x) := \sum_{k=-\infty}^{+\infty}{\rm e}^{-{\rm i}k\theta}f(x+2k\pi).
$$
In view of M.~Reed \& B.~Simon \cite[\S XIII.16]{RS4}, the above operator $U$ can be extended to a unitary operator from $L^2(\R)$ onto the Hilbert space defined as the direct integral sum
$$\int_{(0,2\pi)}^{\oplus}L^2(0,2\pi) \, \frac{d \theta}{2\pi}.$$ 
Now consider the operator $L$ defined by $Lf := -\partial_{xx}f$, the one dimensional Laplacian on $L^2({\Bbb R})$, with its domain $D(L) := H^2({\Bbb R})$. One can easily check that
\bel{1D}
U L U^{-1} = \int_{(0,2\pi)}^{\oplus} L_{\theta} \, \frac{d \theta}{2\pi},
\ee
where, for a fixed $\theta \in [0,2\pi)$, the operator $L_{\theta}$ is defined by 
$$L_{\theta} (U f)_{\theta} := -\left(U \partial_{xx}f \right)_{\theta} $$
for any $f \in H^2(\R)$. However, since 
$$(U f)_{\theta}(2 \pi) = {\rm e}^{{\rm i}\theta} (U f)_{\theta}(0),
\quad\mbox{and}\quad 
(U f)_{\theta}'(2 \pi)= {\rm e}^{{\rm i}\theta} (U f)_{\theta}'(0),$$
the operator $L_{\theta}$ acts in $L^2(0,2\pi)$ as the operator $\phi \mapsto -\partial_{xx}\phi$ with quasiperiodic boundary conditions: this means that the domain of $L_{\theta}$ is given by
$$D(L_{\theta}) := \left\{ \phi \in H^2(0,2\pi) \; ; \; \phi(2\pi) - {\rm e}^{{\rm i}\theta}\phi(0) = \phi'(2\pi) - {\rm e}^{{\rm i}\theta}\phi'(0) = 0\right\}.$$ 
Thus $L_{\theta}\phi = - \partial_{xx}\phi$ for $\phi \in D(L_{\theta})$, and the relation \eqref{1D} gives a decomposition of $L$ in terms of the direct integral sum of the operators $L_{\theta}$. Indeed one can express also the spectrum of ${\rm sp}(L)$ in terms of the spectrums of the operators ${\rm sp}(L_{\theta})$, which is precisely the sequence 
$${\rm sp}(L_{\theta}) = \left\{ \lambda_{j}(\theta) \; ; \; j \in {\Bbb Z} \right\},\quad\mbox{with }\, 
\lambda_{j}(\theta) := \left(j + {\theta \over 2\pi}\right)^2,$$
and, as a matter of fact, noting that $\lambda_{j}([0,2\pi)) = [j^2,(j + 1)^2)$ for $j \geq 0$, we have
$${\rm sp}(L) =  \overline{\bigcup_{j \in {\Bbb Z}}\lambda_{j}([0,2\pi))} = [0,\infty).$$

The same procedure can be applied to the operator $A$ given by \eqref{eq:Def-A-1}, albeit with a slight modification: indeed, since we are dealing with functions of $x \in \Omega := \omega\times {\Bbb R}$ in this framework, we shall rather consider the transform defined for any $f \in \mathscr{S}(\overline{\omega}\times{\Bbb R})$ by
$$\mbox{for } x' \in \omega,\;\mbox{and}\; x_3, \theta \in [0,2\pi),\qquad
(U f)_\theta (x',x_3)=\sum_{k=-\infty}^{+\infty} {\rm e}^{-{\rm i}k\theta} f(x',x_3+2k\pi),
$$
and then suitably extend it to a unitary operator from $L^2(\Omega)$ onto the direct integral sum
$$\int_{(0,2\pi)}^{\oplus}L^2(Y) \,\frac{d \theta}{2\pi},$$
where we recall that $Y$ is defined in \eqref{eq:Def-Y-Gamma}. 
The potential $V$ being periodic with respect to $x_3$ by the assumption \eqref{eq:V-per}, we obtain in a similar way to \eqref{1D}, that
\bel{decomposition}
U A U^{-1}=\int_{(0,2\pi)}^{\oplus} A_{\theta}  \frac{d \theta}{2\pi}.
\ee
Here, for each fixed $\theta\in [0,2\pi)$ the operator $A_{\theta}$ acts in $L^2(Y)$ as $-\Delta+V$ on its domain, composed of those functions $\phi \in H^2(Y)$ such that  
\begin{equation}\label{eq:CndDirichlet}
\forall\,\sigma'\in \partial\omega,\quad \forall\, x_{3}\in (0,2\pi),\qquad
\phi(\sigma',x_{3}) = 0,
\end{equation}
and 
\begin{equation}\label{eq:CndQP}
\phi(\cdot,2\pi) - {\rm e}^{{\rm i}\theta}  \phi(\cdot,0) = \partial_{3} \phi(\cdot,2\pi) 
- {\rm e}^{{\rm i}\theta}\partial_{3} \phi(\cdot,0) 
= 0\quad 
\mbox{in }\,\omega.
\end{equation}
Thus the operator $A_{\theta}$ is given by 
$$
A_{\theta}\phi := - \Delta\phi + V\phi,
$$
for $\phi \in D(A_{\theta})$ defined to be
$$
D(A_{\theta})  := \left\{\phi \in H^1(Y) \; ; \;A_{\theta}\phi \in L^2(Y), \; \phi\;\mbox{satisfies \eqref{eq:CndDirichlet} and \eqref{eq:CndQP}}\right\}. 
$$
It is clear that for each $\theta \in [0,2\pi)$ the operator $A_{\theta}$ is self-adjoint, and that the imbedding of $D(A_{\theta})$ (endowed with its graph norm) into $L^2(Y)$ is compact: this means that $A_{\theta}$ has a compact resolvent and thus its spectrum  is composed of a sequence of real numbers 
$\{\lambda_{k}(\theta)\; ; \; k \in \N^* \}$, where  these numbers are assumed to be ordered in a non-decreasing order (repeated according to their multiplicity), and $\lambda_{k}(\theta) \to + \infty$ as $k \to +\infty$. Actually the spectrum of $A$ is determined in terms of the spectrums of $(A_{\theta})_{\theta\in [0,2\pi)}$, by the relation:  
\bel{spec}
{\rm sp}(A)=\overline{\bigcup_{k \in \N^*} \lambda_{k}([0,2\pi))}.
\ee
Moreover, the spectrum of $A$ is purely absolutely continuous (cf. N.~Filonov \& I.~Ka\-chkovskii \cite[Theorem 2.1]{FK}), which amounts to say that the so-called band functions $\theta \mapsto \lambda_j(\theta)$, $j \in \N^*$, are non constant.
\medskip

To go further and say a few words about the {\it generalized eigenfunctions\/} of $A$, we introduce a family $\{ \phi_{\theta,k}\; ; \; k \in \N^* \}$ of eigenfunctions of the operator $A_{\theta}$, which satisfy
$$ A_{\theta} \phi_{\theta,k} = \lambda_{k}(\theta) \phi_{\theta,k}\quad\mbox{in }\, Y, $$
and form an orthonormal basis of $L^2(Y)$. For $k \in \N^*$ and $\theta \in [0,2 \pi)$, we define a function $\Phi_{\theta,k}$ by setting
\bel{def:Phi-k} 
\mbox{for }\, x = (x',x_3) \in Y,\quad n \in\Z, \qquad \Phi_{\theta,k}(x',x_3+2n \pi) := {\rm e}^{{\rm i} n \theta} \phi_{\theta,k}(x),
\ee
so that for any $\chi \in C^\infty_{c}({\Bbb R})$, the function $x\mapsto \chi(x_{3})\,\Phi_{\theta,k}(x)$ belongs to the domain $D(A)$. 
For any $k \in \N^*$ and $\theta \in [0,2\pi)$, it is easy to check that
$$(-\Delta + V) \Phi_{\theta,k} = \lambda_{k}(\theta) \Phi_{\theta,k}\qquad\mbox{in }\, \Omega,$$ 
in the sens of the distributions. Therefore, for any given $k \in \N^*$ and $\theta \in [0,2\pi)$, each $\Phi_{\theta,k}$ is a  generalized eigenfunction of $A$ associated with the generalized eigenvalue $\lambda_{k}(\theta)$. 
Furthermore, the family $\left\{ \Phi_{\theta,k} \; ; \; k \in \N^*,\ \theta \in [0,2 \pi) \right\}$ is a complete system of generalized eigenfunctions of $A$, in the sense that upon setting
$$u_{k}(\theta) := \int_{Y} u(x',x_3) \overline{\Phi_{\theta,k}(x',x_3)} dx' dx_3, $$
for $u\in L^2(\Omega)$, the mapping
$$ u \mapsto \left\{ u_{k}(\theta)\; ; \; k \in \N^*,\; \theta \in [0,2 \pi) \right\}, $$
defines a unitary operator from $L^2(\Omega)$ onto $\bigoplus_{k \in \N^*} L^2(0,2 \pi)$, that is for any $u,v \in L^2(\Omega)$ we have:
$$(u|v)_{L^2(\Omega)} = \sum_{k \geq 1}\int_{0}^{2\pi} u_{k}(\theta)\overline{v_{k}(\theta)}\, {d\theta \over 2\pi}\, .$$
Now, the (full) Floquet spectral data associated with the operator $A$ is defined as
$$\left\{ (\lambda_{k}(\theta), {\rm span}(\Phi_{\theta,k})) \; ; \; k \in \N^*,\; \theta \in [0,2\pi) \right\}.$$
Often, with two abuses of notations, we shall denote the above full Floquet spectral data set as
\begin{equation}\label{eq:Def-FSD}
{\rm FSD}(V) := \left\{ (\lambda_{k}(\theta), \phi_{\theta,k}) \; ; \; k \in \N^*,\; \theta \in [0,2\pi) \right\},
\end{equation}
that is in the first place we use the eigenfunctions $\phi_{\theta,k}$ defined on $\omega\times(0,2\pi)$ instead of $\Phi_{\theta,k}$: clearly this does not create any ambiguity since $\Phi_{\theta,k}$ is known in a unique manner through the definition \eqref{def:Phi-k}. The next abuse of notations is owed to the fact that we omit to say that what is indeed important is the eigenspace ${\rm span}(\Phi_{\theta,k})$, or ${\rm span}(\phi_{\theta,k})$, rather than each eigenfunction $\Phi_{\theta,k}$ or $\phi_{\theta,k}$, in particular when the Floquet eigenvalue $\lambda_{k}(\theta)$ is a multiple eigenvalue. 

Moreover, in accordance with G. Eskin, J. Ralston \& E. Trubowitz \cite[\S I.6]{ERT}, for any $\theta\in[0,2\pi)$ fixed, the set $\{(\lambda_j(\theta),\Phi_{\theta,j}) \; ; \; j \in \N^*\}$ will be referred to as the {\it Floquet spectral data\/} (or equivalently, the {\it Floquet eigenpairs\/}) associated with the operator $A$ at $\theta\in [0,2\pi)$.

The Floquet boundary spectral data associated to the potential $V$ will be the set of eigenpairs $(\lambda_{k}(\theta),{\rm span}(\psi_{\theta,k}))$, where
\begin{equation}\label{eq:Def-psi-k}
\psi_{\theta,k} := {\partial\phi_{\theta,k} \over \partial{\bf n}}\, ,
\end{equation}
and for a fixed $k$ and $\theta$, the finite dimensional space ${\rm span}(\psi_{\theta,k})$ is meant to be spanned by the normal derivatives of all eigenfunctions associated to $\lambda_{k}(\theta)$. The Floquet boundary spectral data set will be denoted, again by an abuse of notations, as
\begin{equation}\label{eq:Def-BFSD}
\begin{cases}
&{\rm FBSD}(V) := \bigcup_{\theta\in [0,2\pi]} {\rm FBSD}(\theta,V), \cr
&\mbox{where }\,
{\rm FBSD}(\theta,V) := \left\{ (\lambda_{k}(\theta), \psi_{\theta,k}) \; ; \; k \in \N^*\right\}\cr
\end{cases}
\end{equation}


\subsection{Main results in an infinite waveguide}
\label{sec-mainresults}
We consider two potentials $V_m \in L^\infty(\Omega;\R)$, $m=1,2$, that are $2 \pi$-periodic with respect to  $x_3$,
\bel{eq:V-per2}
V_m(x',x_3+2\pi) = V_m(x',x_3),\qquad x' \in \omega,\ x_3 \in \R,
\ee
and we call $A_m$ (resp. $A_{m,\theta}$ for $\theta \in [0,2\pi)$) the operator obtained by substituting $V_m$ for $V$ in the definition of the operator $A$ (resp. $A_{\theta}$), so that we have:
\bel{decomposition2}
U A_m U^{-1}=\int_{(0,2\pi)}^{\oplus} A_{m,\theta} \,  \frac{d \theta}{2\pi},\qquad\mbox{for }\, m=1,2.
\ee
Further we note $\left\{(\lambda_{m,k}(\theta),\phi_{m,\theta,k}) \; ; \; k \in \N^*,\; \theta \in [0,2 \pi) \right\}$ the full spectral data associated with $A_m$, for $m=1,2$, as defined in \eqref{eq:Def-FSD}. 

\medskip

The following is an identification result for a potential defined in a waveguide:

\begin{theorem}
\label{thm:Uniqueness} 
For $m = 1, 2$ let $V_m \in L^\infty(\Omega;\R)$ satisfy \eqref{eq:V-per2}, and denote $\psi_{m,k,\theta}$ as in \eqref{eq:Def-psi-k}. Assume that for some $\theta_{0}\in [0,2\pi)$ and some integer $N \geq 1$ we have
\bel{t1a} 
\forall \,  k \geq N + 1,\quad \lambda_{1,k}(\theta_0) = \lambda_{2,k}(\theta_0)\quad\mbox{and}\quad
\psi_{1,\theta_{0},k} = \psi_{2,\theta_{0},k}\quad\mbox{on }\, \Gamma.
\ee
Then we have $V_1 \equiv V_2$.
\end{theorem}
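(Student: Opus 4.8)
The plan is to deduce Theorem~\ref{thm:Uniqueness} directly from Theorem~\ref{thm:Uniqueness-QP}, by recognizing that the hypothesis \eqref{t1a} is a statement about a single Floquet fiber of $A_m$, and that this fiber is precisely the quasi-periodic Dirichlet operator already treated in Theorem~\ref{thm:Uniqueness-QP}. First I would observe that, since each $V_m$ is $2\pi$-periodic in $x_3$ by \eqref{eq:V-per2}, it is completely determined by its restriction $V_m|_Y \in L^\infty(Y;\R)$, so it suffices to prove that $V_1|_Y = V_2|_Y$ on $Y$.

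Next I would identify the fiber operator. Fix $\theta_0$ as in the statement. By construction of the direct-integral decomposition \eqref{decomposition2}, the operator $A_{m,\theta_0}$ acts in $L^2(Y)$ as $\phi \mapsto -\Delta\phi + V_m\phi$ on the domain of functions satisfying the Dirichlet condition on $\partial\omega\times(0,2\pi)$ together with the $\theta_0$-quasi-periodic conditions on $\omega\times\{0,2\pi\}$; since $\omega$ is $\mathcal C^{1,1}$, elliptic regularity shows that this domain is contained in $H^2(Y)$, hence the outer normal derivative of any element on $\Gamma$ is well defined as an element of $L^2(\Gamma)$ through the trace theorem. Consequently the eigenpairs $(\lambda_{m,k}(\theta_0),\phi_{m,\theta_0,k})_{k\ge 1}$ of $A_{m,\theta_0}$, together with the normal traces $\psi_{m,\theta_0,k} = \partial\phi_{m,\theta_0,k}/\partial{\bf n}$ on $\Gamma$, are exactly the objects appearing in the eigenvalue problem \eqref{eq:Eigen-Pb-0} written for $V := V_m|_Y$ and $\theta := \theta_0$. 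In particular, the assumption \eqref{t1a} coincides verbatim with the assumption \eqref{eq:Cnd-BSD-0} of Theorem~\ref{thm:Uniqueness-QP}.

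With this dictionary in hand, I would simply invoke Theorem~\ref{thm:Uniqueness-QP} with $\theta := \theta_0$ applied to the pair $V_1|_Y, V_2|_Y$, to obtain $V_1|_Y \equiv V_2|_Y$ on $Y$, and then extend this identity to all of $\Omega$ using the $2\pi$-periodicity \eqref{eq:V-per2}. The same reduction shows, incidentally, that a weakened hypothesis in the spirit of \eqref{eq:Ecart-lambda-psi} at $\theta_0$ — i.e. $\lambda_{1,k}(\theta_0)-\lambda_{2,k}(\theta_0)\to 0$ together with $\sum_{k\ge 1}\|\psi_{1,\theta_0,k}-\psi_{2,\theta_0,k}\|_{L^2(\Gamma)}^2<\infty$ — would suffice as well.

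Since we are permitted to use Theorem~\ref{thm:Uniqueness-QP}, there is no genuine obstacle in the waveguide statement itself: the whole content is the reduction, which is essentially bookkeeping. The only point requiring a little care is the verification that the Floquet fiber $A_{m,\theta_0}$ really agrees — as a self-adjoint operator with the stated $H^2(Y)$ domain and well-defined $L^2(\Gamma)$ normal traces — with the realization of $-\Delta + V_m$ implicit in \eqref{eq:Eigen-Pb-0}; this relies on the $\mathcal C^{1,1}$ regularity of $\omega$ and standard elliptic and trace estimates. The real difficulty is thus concentrated entirely in Theorem~\ref{thm:Uniqueness-QP}, whose proof I would expect to combine a boundary-integral / complex-geometric-optics reconstruction argument adapted to use only the partial boundary $\Gamma$, with a finite-rank perturbation step to compensate for the missing first $N$ eigenpairs.
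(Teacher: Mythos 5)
Your reduction is exactly what the paper does: in Subsection \ref{subsec:Dem-thm:Uniqueness-QP} the authors observe that Theorem \ref{thm:Uniqueness} is ``nothing but a rewriting'' of Theorem \ref{thm:Uniqueness-QP}, since the hypothesis \eqref{t1a} concerns only the single Floquet fiber $A_{m,\theta_0}$, which is precisely the quasi-periodic Dirichlet realization of \eqref{eq:Eigen-Pb-0} on $Y$, and periodicity recovers $V_1\equiv V_2$ on all of $\Omega$. Your identification of the fiber operator and the domain/trace regularity remarks are correct and consistent with the paper's setup, so the proposal is correct and follows the paper's own route.
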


Theorem \ref{thm:Uniqueness} yields that the knowledge of the Floquet boundary spectral data, with the possible exception of finitely many generalized eigenpairs, at one arbitrary $\theta_0 \in [0,2 \pi)$, uniquely determines the operator $A$. The claim seems quite surprising at first sight, since the full boundary spectral data of $A$ is the collection of the Floquet data at $\theta$, for $\theta$ evolving in $[0,2 \pi)$. Nevertheless, we point out that this result is in accordance with G. Eskin, J. Ralston \& E. Trubowitz \cite[Theorem 6.2]{ERT}, where Floquet isospectrality at $\theta=0$ for Schr\"odinger operators with analytic periodic potential in $\R^n$, $n \geq 1$, implies Floquet isospectrality for all $\theta \in [0,2\pi)$. 

\medskip

As a matter of fact, we show the stability result stated in Theorem \ref{thm:Stability} below, which yields a much stronger uniqueness result. Indeed, notwithstanding the fact that the main interest of Theorem \ref{thm:Uniqueness} lies in its simplicity, notice that under the assumptions \eqref{t1a} one has also 
\begin{equation*}\label{eq:Cnd-Ecart-ell-2}  
\sum_{k=1}^{\infty} \, \|\psi_{1,\theta_{0},k} - \psi_{2,\theta_{0},k}\|^2_{L^2(\Gamma)} < \infty.
\end{equation*}
Actually, the above condition is sufficient to state a stability result in terms  of the asymptotic distance between the eigenvalues $|\lambda_{1,k}(\theta_{0}) - \lambda_{2,k}(\theta_{0})|$, as stated in the following:

\begin{theorem}
\label{thm:Stability} 
Let $M >0$ be such that for $V_m \in L^\infty(\Omega;\R)$ fulfilling \eqref{eq:V-per2}, and $m = 1, 2$, 
we have $\|V_{m}\|_{\infty} \leq M$, and denote $V = (V_1-V_2)1_{Y}$.  Let $\widehat{V}$ be the Fourier transform of $V$ defined by
$$\widehat{V}(\xi',j) := (2\pi)^{-3/2}\int_0^{2\pi}\int_{\R^2}V(x',x_3)e^{-{\rm i}(\xi'\cdot x' + j x_3)}dx'dx_3,$$
for $\xi'\in{\Bbb R}^2$ and $j\in{\Bbb Z}$.
For some given $\theta_{0}\in [0,2\pi)$ assume that
\begin{equation}\label{eq:Ecart-ell-2-psi} 
\sum_{k=1}^\infty \norm{\psi_{1,\theta_{0},k} - \psi_{2,\theta_{0},k}}_{L^2(\Gamma)}^2 < \infty.
\end{equation}
Then there exists a positive constant $c$ depending only on $\omega$ and $M$ such that for $(\xi',j)\in\R^2\times \mathbb{Z}$, and all $N \geq 1$, the following stability estimate holds:
\begin{equation}\label{eq:Estim-stability}
|\widehat{V}(\xi',j)| \leq c\, 
\sup_{k\geq N}|\lambda_{1,k}(\theta_0)-\lambda_{2,k}(\theta_0)|.
\end{equation}
\end{theorem}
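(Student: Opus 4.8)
The plan is an Alessandrini–identity argument based on complex geometric optics (CGO) solutions adapted to the quasi-periodic geometry, combined with a Borg–Levinson type spectral representation of a Dirichlet-to-Neumann map evaluated below the spectra. Fix $(\xi',j)\in\R^2\times\Z$, write $\xi:=(\xi',j)$ and $\xi\cdot x:=\xi'\cdot x'+jx_3$, and set $z_0:=-M-1$, a fixed real number lying below $\mathrm{sp}(A_{1,\theta_0})\cup\mathrm{sp}(A_{2,\theta_0})$ with $\mathrm{dist}(z_0,\mathrm{sp}(A_{m,\theta_0}))\ge 1$. For a large parameter $\rho$ and $m=1,2$, the first step is to build solutions $u_m^{(\rho)}\in H^2(Y)$ of $-\Delta u_m^{(\rho)}+(V_m-z_0)u_m^{(\rho)}=0$ in $Y$ satisfying the quasi-periodic conditions \eqref{eq:CndQP} at $\theta_0$ on $\omega\times\{0,2\pi\}$ (with \emph{no} prescribed condition on $\Gamma$), of the form $u_m^{(\rho)}={\rm e}^{{\rm i}\zeta_m^{(\rho)}\cdot x}(1+r_m^{(\rho)})$ with $\zeta_m^{(\rho)}\cdot\zeta_m^{(\rho)}=0$, $|\zeta_m^{(\rho)}|=\rho$, the third component of $\zeta_m^{(\rho)}$ being the fixed real number $\tfrac{\theta_0}{2\pi}+n_m$ for suitable integers $n_1,n_2$ (so that ${\rm e}^{{\rm i}\zeta_m^{(\rho)}\cdot x}$ is ${\rm e}^{{\rm i}\theta_0}$-quasi-periodic in $x_3$) and with $\zeta_1^{(\rho)}-\overline{\zeta_2^{(\rho)}}=-\xi$. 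The usual Faddeev/Sylvester–Uhlmann construction, performed in the cross-section (Fourier transform in $x'$, Fourier series in $x_3$), gives $\|r_m^{(\rho)}\|_{L^2(Y)}\le c(M,\omega)/\rho$, and then $u_1^{(\rho)}\overline{u_2^{(\rho)}}={\rm e}^{-{\rm i}\xi\cdot x}\bigl(1+r_1^{(\rho)}+\overline{r_2^{(\rho)}}+r_1^{(\rho)}\overline{r_2^{(\rho)}}\bigr)$ is bounded on $\overline Y$.

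Apply Green's formula on $Y$ to $u_1^{(\rho)}$ and $\overline{u_2^{(\rho)}}$. Both are quasi-periodic in $x_3$ with unimodular Floquet factors ${\rm e}^{{\rm i}\theta_0}$ and ${\rm e}^{-{\rm i}\theta_0}$, so the boundary contributions on the faces $\omega\times\{0\}$ and $\omega\times\{2\pi\}$ cancel, and one is left with
$$(2\pi)^{3/2}\widehat V(\xi)+o(1)=\int_Y(V_1-V_2)\,u_1^{(\rho)}\,\overline{u_2^{(\rho)}}\,dx=\int_\Gamma\!\Bigl(\overline{u_2^{(\rho)}}\,\tfrac{\partial u_1^{(\rho)}}{\partial{\bf n}}-u_1^{(\rho)}\,\tfrac{\partial\overline{u_2^{(\rho)}}}{\partial{\bf n}}\Bigr)dS=\bigl\langle(\Lambda_1(z_0)-\Lambda_2(z_0))f_1^{(\rho)},f_2^{(\rho)}\bigr\rangle_{L^2(\Gamma)}$$
as $\rho\to\infty$, where $f_m^{(\rho)}:=u_m^{(\rho)}|_\Gamma$ and $\Lambda_m(z_0)$ denotes the Dirichlet-to-Neumann map on $\Gamma$ for $-\Delta+V_m-z_0$ with quasi-periodic conditions at $\theta_0$. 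This is the point at which the hypothesis is reduced to the lateral part $\Gamma$ of $\partial Y$ alone.

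Next, express $\Lambda_m(z_0)$ through the Floquet boundary spectral data. Splitting the solution of the boundary value problem into a fixed ($V$-independent) lifting of its Dirichlet trace on $\Gamma$ plus a term in $D(A_{m,\theta_0})$, expanding the latter on the orthonormal basis $\{\phi_{m,\theta_0,k}\}_k$ by means of the resolvent of $A_{m,\theta_0}$ at $z_0$, using the identity $\langle f,\psi_{m,\theta_0,k}\rangle_{L^2(\Gamma)}=-(\lambda_{m,k}(\theta_0)-z_0)\langle \hat u_{m,f},\phi_{m,\theta_0,k}\rangle_{L^2(Y)}$ for the solution $\hat u_{m,f}$ with trace $f$, and letting an auxiliary energy tend to $-\infty$ (where $\Lambda_1-\Lambda_2$ vanishes), one obtains the Borg–Levinson formula
$$\bigl\langle(\Lambda_1(z_0)-\Lambda_2(z_0))f,g\bigr\rangle_{L^2(\Gamma)}=-\sum_{k\ge1}\left(\frac{\langle f,\psi_{1,\theta_0,k}\rangle_{L^2(\Gamma)}\,\overline{\langle g,\psi_{1,\theta_0,k}\rangle_{L^2(\Gamma)}}}{\lambda_{1,k}(\theta_0)-z_0}-\frac{\langle f,\psi_{2,\theta_0,k}\rangle_{L^2(\Gamma)}\,\overline{\langle g,\psi_{2,\theta_0,k}\rangle_{L^2(\Gamma)}}}{\lambda_{2,k}(\theta_0)-z_0}\right)$$
(the series converging after the regrouping below). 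Writing the $k$-th summand as a part linear in $\psi_{1,\theta_0,k}-\psi_{2,\theta_0,k}$ plus a part carrying the factor $\dfrac{\lambda_{2,k}(\theta_0)-\lambda_{1,k}(\theta_0)}{(\lambda_{1,k}(\theta_0)-z_0)(\lambda_{2,k}(\theta_0)-z_0)}$, inserting $f=f_1^{(\rho)}$, $g=f_2^{(\rho)}$, and replacing systematically $\langle f_1^{(\rho)},\psi_{1,\theta_0,k}\rangle_{L^2(\Gamma)}$ (resp. $\langle f_2^{(\rho)},\psi_{2,\theta_0,k}\rangle_{L^2(\Gamma)}$) by $-(\lambda_{1,k}(\theta_0)-z_0)\langle u_1^{(\rho)},\phi_{1,\theta_0,k}\rangle_{L^2(Y)}$ (resp. $-(\lambda_{2,k}(\theta_0)-z_0)\langle u_2^{(\rho)},\phi_{2,\theta_0,k}\rangle_{L^2(Y)}$), the series becomes absolutely convergent — using Weyl's law $\lambda_{m,k}(\theta_0)\sim c\,k^{2/3}$ and the elliptic trace bound $\|\psi_{m,\theta_0,k}\|_{L^2(\Gamma)}\lesssim\lambda_{m,k}(\theta_0)^{1/2}$ — and in it every exponentially growing factor carried by $f_m^{(\rho)}$ appears only inside the bounded products $u_1^{(\rho)}\overline{u_2^{(\rho)}}$ or paired against the $\ell^2$-summable family $(\psi_{1,\theta_0,k}-\psi_{2,\theta_0,k})_k$ supplied by \eqref{eq:Ecart-ell-2-psi}.

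Finally, fix $N\ge1$ and split the series at $k=N$. For the finitely many indices $k\le N$, one uses $u_m^{(\rho)}\rightharpoonup0$ weakly in $L^2(Y)$ (Riemann–Lebesgue, since $\mathrm{Re}\,\zeta_m^{(\rho)}\to\infty$ within the cross-section), hence $\langle u_m^{(\rho)},\phi_{m,\theta_0,k}\rangle_{L^2(Y)}\to0$, so each of these terms tends to $0$ as $\rho\to\infty$, with bound depending only on $M,\omega,N$. For $k\ge N+1$, the eigenvalue-difference part is bounded by $\sup_{k\ge N}|\lambda_{1,k}(\theta_0)-\lambda_{2,k}(\theta_0)|$ times a series that converges to a constant $c(M,\omega)$, while the $\psi$-difference part is bounded by $c(M,\omega)\bigl(\sum_{k\ge1}\|\psi_{1,\theta_0,k}-\psi_{2,\theta_0,k}\|_{L^2(\Gamma)}^2\bigr)^{1/2}$ times a quantity tending to $0$ as $\rho\to\infty$. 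Passing to the limit $\rho\to\infty$ leaves only $c(M,\omega)\sup_{k\ge N}|\lambda_{1,k}(\theta_0)-\lambda_{2,k}(\theta_0)|$, which is \eqref{eq:Estim-stability}. The main obstacle is precisely this last bookkeeping: the individual CGO solutions, hence their traces on $\Gamma$, blow up exponentially as $\rho\to\infty$ (a non-zero null complex frequency in $\R^3$ necessarily has real and imaginary parts of comparable, diverging size), so one cannot bound $\langle(\Lambda_1(z_0)-\Lambda_2(z_0))f_1^{(\rho)},f_2^{(\rho)}\rangle$ by $\|\Lambda_1(z_0)-\Lambda_2(z_0)\|\,\|f_1^{(\rho)}\|\,\|f_2^{(\rho)}\|$; the spectral series must be regrouped so that each exponentially large factor is absorbed, and the absolute convergence of the regrouped series — which legitimizes the term-by-term estimates — must be proved from the Weyl asymptotics and the trace estimates for the $\psi_{m,\theta_0,k}$, uniformly in $\rho$. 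A secondary (minor) point is arranging the CGO phases to respect the quasi-periodicity at $\theta_0$, which forces their $x_3$-component to be real; this is harmless since $|\zeta_m^{(\rho)}|\to\infty$ can still be achieved in the cross-section.
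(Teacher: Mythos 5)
Your overall architecture (a representation formula for $\widehat V$ via a DtN pairing on $\Gamma$, plus a Borg--Levinson spectral expansion of $\Lambda_1-\Lambda_2$ split into an eigenvalue-difference part and a $\psi$-difference part) matches the paper's, and you have correctly located the critical difficulty. But your resolution of that difficulty does not work, and this is a genuine gap, not a bookkeeping issue. With fixed energy $z_0$ and Sylvester--Uhlmann phases of size $\rho\to\infty$, the traces $f_m^{(\rho)}$ and the solutions $u_m^{(\rho)}$ grow like ${\rm e}^{c\rho}$. After your regrouping, the generic term of the series is a \emph{product of two separate inner products}, e.g. $\langle f_1^{(\rho)},\psi_{1,k}\rangle\,\overline{\langle f_2^{(\rho)},\psi_{2,k}\rangle}$, equivalently $(\lambda_{1,k}-z_0)(\lambda_{2,k}-z_0)\langle u_1^{(\rho)},\phi_{1,k}\rangle\overline{\langle u_2^{(\rho)},\phi_{2,k}\rangle}$. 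The cancellation ${\rm e}^{{\rm i}\zeta_1\cdot x}\,{\rm e}^{-{\rm i}\overline{\zeta_2}\cdot x}={\rm e}^{-{\rm i}\xi\cdot x}$ occurs only when the two exponentials sit at the \emph{same} point, as in $\int_Y V u_1^{(\rho)}\overline{u_2^{(\rho)}}\,dx$; in a product of two integrals over $Y$ (or over $\Gamma$) the exponential factors live at independent points and nothing tames the product, which is generically of size ${\rm e}^{c\rho}$. For the same reason the $\ell^2$-summability of $(\psi_{1,k}-\psi_{2,k})_k$ cannot compensate the factor $\|f_1^{(\rho)}\|_{L^2(\Gamma)}\|f_2^{(\rho)}\|_{L^2(\Gamma)}$. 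Two auxiliary claims also fail: $u_m^{(\rho)}$ does \emph{not} tend weakly to $0$ in $L^2(Y)$ (Riemann--Lebesgue applies to bounded oscillating families, not to exponentially growing ones, so the terms $k\le N$ do not vanish); and the absolute convergence you invoke from Weyl's law $\lambda_k\sim ck^{2/3}$ together with $\|\psi_k\|_{L^2(\Gamma)}\lesssim\lambda_k^{1/2}$ is false, since $\sum_k\lambda_k/(\lambda_k-z_0)^2\sim\sum_k k^{-2/3}=\infty$ in dimension $3$.

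The paper's way out of exactly this obstruction is Isozaki's: do not fix the energy. One lets $\lambda\to\infty$ with ${\rm Im}\,\lambda\to+\infty$ and chooses $\zeta_0,\zeta_1$ with $\zeta_\ell\cdot\zeta_\ell=-\lambda$ and $\zeta_0+\zeta_1\to-{\rm i}(\xi',-j)$, but with \emph{bounded real parts} (Lemmas \ref{lem:Larger-than-zero}--\ref{lem:Equal-to-zero}), so that $e_{\zeta_0}$ and $e_{*\zeta_1}$ remain uniformly bounded in $L^2(Y)$ and $L^2(\Gamma)$. Then each factor in the spectral series is individually controlled through the identity $\sum_k|\langle\psi_k,e_{\zeta_0}\rangle|^2/|\lambda-\lambda_k|^2=\|u_\lambda\|_{L^2(Y)}^2\le c$ (Lemma \ref{lem:Estim-u-e-zeta}) --- this, and not a crude trace bound, is what makes the regrouped series summable uniformly in the large parameter. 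The finitely many terms with $k\le N$ vanish because $|\lambda-\lambda_{m,k}|\to\infty$, the resolvent remainder vanishes because ${\rm dist}(\lambda,{\rm sp}(A_{m,\theta}))\ge{\rm Im}\,\lambda\to\infty$, and the quasi-periodicity constraint on the third components of $\zeta_0,\zeta_1$ is accommodated by the three case distinctions $j\ge1$, $j\le-1$, $j=0$. To repair your argument you would have to replace the fixed-energy CGO scheme by this large-imaginary-energy scheme; the rest of your outline (the two-parameter $\mu\to-\infty$ regularization of the normal derivative, the $A_{*k}/B_{*k}$ splitting, and the final limit) then goes through as in Lemmas \ref{lem:Lim-mu-F1-F2}--\ref{lem:Lim-lambda-B*k}.
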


\noindent Since one can easily see that in general one has 
$$|\lambda_{1,k}(\theta_{0}) - \lambda_{2,k}(\theta_{0})| \leq \|V_{1} - V_{2}\|_{\infty} = \|V\|_{\infty},$$ the above stability estimate is, in some loose sense, optimal. 

It is noteworthy that \eqref{eq:Estim-stability} involves only the asymptotic distance between the eigenvalues, and does not involve explicitely any quantitative information about $\|\psi_{1,\theta_{0},k} - \psi_{2,\theta_{0},k}\|_{L^2(\Gamma)}$. This seems somewhat surprising, since one can exhibit distinct {\it isospectral\/} potentials $V_{1},V_{2}$ on certain domains $\omega$ (or equivalently domains $Y$ and potentials $V_{1},V_{2}$), such that $\lambda_{1,k}(\theta_{0}) = \lambda_{2,k}(\theta_{0})$ for all $k \geq 1$: indeed for such potentials one has $\sum_{k=1}^\infty \norm{\psi_{1,\theta_{0},k} - \psi_{2,\theta_{0},k}}_{L^2(\Gamma)}^2 = \infty$. 

Actually, from the estimate \eqref{eq:Estim-stability} one can deduce  estimates of $\norm{V}_{H^{-1}(\Omega)}$ in terms of $\delta_{0} := \sup_{k\geq N}|\lambda_{1,k}(\theta_0)-\lambda_{2,k}(\theta_0)|$. Also, with some additional assumptions, one can get estimates of $\norm{V_{1} - V_{2}}$ in other spaces such as $L^2(\Omega)$, $L^\infty(\Omega)$,\dots. Here, in order to preserve some generality, we do not consider such further estimates directly in terms of $V_{1} - V_{2}$. 

The approach developed in the subsequent sections, to prove the stability result, allows us to tackle other inverse spectral problems for operators of the type $u \mapsto -\Delta u + Vu$ with boundary conditions such as Dirichlet, or Neumann or Fourier type boundary conditions (that is $\partial u/\partial{\bf n} + K(\sigma)u(\sigma) = 0$ on the boundary). Elsewhere we shall develop on such problems, or actually more general spectral problems such as $ -{\rm div}(a(x)\nabla \phi_{k}) + V(x)\phi_{k} = \lambda_{k}\rho(x)\phi_{k}$ with various types of boundary conditions (here, for some $\epsilon_{0} >0$ it is assumed that $\min(a(x),\rho(x)) \geq \epsilon_{0} >0$).

We would like to mention that, in the context of Dirichlet boundary conditions for the operator $u \mapsto -\Delta u + Vu$ on a bounded domain $\Omega \subset {\Bbb R}^n$, analogous estimates have been established  by M. Choulli \& P. Stefanov \cite{Choulli-Stefanov}. More precisely, 
these authors prove in the latter reference that
if the quantities $\delta_{0}$ and $\delta_{1}$ defined by 
$$\delta_{0} := \sup_{k \geq 1}k^\alpha|\lambda_{1,k} - \lambda_{2,k}|,
\qquad 
\delta_{1} := \sup_{k\geq 1} k^\beta\|\psi_{1,k} - \psi_{2,k}\|_{L^2(\Gamma)},$$
where $\alpha > 1$ and $\beta > 1 - (1/2n)$, are finite, then one has $V_{1} \equiv V_{2}$. 
Note that whenever $\delta_{0} + \delta_{1} < \infty$, then the conditions \eqref{eq:Ecart-ell-2-psi} and \eqref{eq:Cnd-Ecart-nul} of this paper are satisfied.
The approach we take in this paper follows somewhat the cited paper by M. Choulli \& P. Stefanov in the use of H.~Isozaki's representation formula (see Theorem \ref{thm:Isozaki}), however, in estimating $V_{1} - V_{2}$, in this paper we get rid of the dependence on quantities involving $\|\psi_{1,\theta_{0},k} - \psi_{2,\theta_{0},k}\|_{L^2(\Gamma)}$.
\medskip

It is clear that the following is a consequence of Theorem \ref{thm:Stability}:
\begin{theorem}
\label{thm:Uniqueness-Asymptotic} 
Let $V_m$, $m=1,2$, satisfy the same assumptions as in Theorem \ref{thm:Uniqueness}. Suppose that the following condition
\begin{equation}\label{eq:Cnd-Ecart-nul} 
\lim_{k\to\infty}|\lambda_{1,k}(\theta_{0}) -\lambda_{2,k}(\theta_{0})| = 0,
\end{equation}
and \eqref{eq:Ecart-ell-2-psi} hold  for some $\theta_0 \in [0, 2 \pi)$. Then we have $V_1 \equiv V_2$.

Analogously, if $V_{m}\in L^\infty(Y)$, with the notations and assumptions of Theorem \ref{thm:Uniqueness-QP}, when  \eqref{eq:Ecart-lambda-psi} is satisfied, then we have $V_{1} \equiv V_{2}$.
\end{theorem}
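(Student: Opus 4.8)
The plan is to deduce this immediately from the stability estimate \eqref{eq:Estim-stability}. Fix $\theta_0$ as in the hypotheses and set $V := (V_1 - V_2)1_{Y}$, and let $M := \max(\|V_1\|_{\infty},\|V_2\|_{\infty})$ so that $\|V_m\|_{\infty} \le M$ for $m = 1,2$. The assumption \eqref{eq:Ecart-ell-2-psi} is exactly the standing hypothesis of Theorem \ref{thm:Stability}, so I would invoke that theorem to obtain a constant $c = c(\omega,M) > 0$ with
$$|\widehat{V}(\xi',j)| \le c\,\sup_{k\ge N}|\lambda_{1,k}(\theta_0) - \lambda_{2,k}(\theta_0)|$$
for every $(\xi',j) \in \R^2\times\Z$ and every integer $N \ge 1$.

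The next step is to pass to the limit $N \to \infty$. Write $\delta_N := \sup_{k\ge N}|\lambda_{1,k}(\theta_0) - \lambda_{2,k}(\theta_0)|$. By \eqref{eq:Cnd-Ecart-nul} the nonnegative sequence $(|\lambda_{1,k}(\theta_0) - \lambda_{2,k}(\theta_0)|)_k$ tends to $0$, hence each $\delta_N$ is finite and $\delta_N \downarrow 0$ as $N\to\infty$. Fixing $(\xi',j)$ and letting $N\to\infty$ in the inequality above therefore forces $\widehat{V}(\xi',j) = 0$; since $(\xi',j)$ is arbitrary, $\widehat{V} \equiv 0$, and injectivity of the Fourier transform on $L^2$ gives $V \equiv 0$, that is $V_1 = V_2$ a.e. on $Y$. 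The $2\pi$-periodicity \eqref{eq:V-per2} in $x_3$ then propagates this to $V_1 \equiv V_2$ on all of $\Omega$, which is the first assertion.

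For the second assertion I would argue in exactly the same way: the eigenvalues $\lambda_{m,k}(\theta)$ and the normal-derivative data $\psi_{m,\theta,k}$ of the problem \eqref{eq:Eigen-Pb-0} coincide with those of the operators $A_{m,\theta}$ on which Theorem \ref{thm:Stability} is built, so the two conditions in \eqref{eq:Ecart-lambda-psi} are precisely \eqref{eq:Cnd-Ecart-nul} and \eqref{eq:Ecart-ell-2-psi}; the argument above — stopped at the conclusion $V_1 = V_2$ on $Y$, no periodic extension being needed — yields $V_1 \equiv V_2$. I do not expect any genuine obstacle here: all the analytic work is contained in Theorem \ref{thm:Stability}, and what remains is the elementary observation that the tail suprema of a null sequence vanish in the limit, together with the injectivity of the Fourier transform.
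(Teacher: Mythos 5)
Your proposal is correct and is exactly the paper's own argument: the authors note at the start of Subsection 4.2 that under \eqref{eq:Ecart-ell-2-psi} and \eqref{eq:Cnd-Ecart-nul} one has $\sup_{k\geq N}|\lambda_{1,k}(\theta_0)-\lambda_{2,k}(\theta_0)|\to 0$ as $N\to\infty$, so Theorem \ref{thm:Uniqueness-Asymptotic} follows immediately from the stability estimate \eqref{eq:Estim-stability}. Your added details (injectivity of the Fourier transform on the bounded cell $Y$, propagation by periodicity to all of $\Omega$, and the identification of the $Y$-case data with that of $A_{\theta}$) are all correct and merely make explicit what the paper leaves implicit.
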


Otherwise stated, it is enough that the Floquet boundary spectral data associated with $V_1$ and $V_2$ for a single arbitrary $\theta_0 \in [0, 2 \pi)$, coincide asymptotically in the sense of \eqref{eq:Ecart-ell-2-psi} and \eqref{eq:Cnd-Ecart-nul}, for the uniqueness result of Theorems \ref{thm:Uniqueness-QP} and \ref{thm:Uniqueness} to hold. 

\medskip


\subsection{Inverse spectral theory: a short review of the existing literature}

The study of inverse spectral problems goes back at least to V.A.~Ambarzumian \cite{Ambarzumian} who investigated in 1929 the inverse spectral problem of determining the real potential $V$ appearing in the Sturm--Liouville operator $A=-\partial_{xx} + V$, acting in $L^2(0,2\pi)$, from partial spectral data of $A$. He proved in \cite{Ambarzumian} that $V=0$ if and only if the spectrum of the periodic realization of $A$ equals $\{ k^2\; ; \; k \in \N \}$. For the same operator acting on $L^2(0,\pi)$, but endowed with homogeneous Dirichlet boundary conditions, G.~Borg \cite{Borg-1946} and N.~Levinson \cite{Levinson-1949} established that while the Dirichlet spectrum $\{ \lambda_{k} \; ; \; k \in \N^* \}$ does not uniquely determine $V$, nevertheless assuming that $\varphi_{k}'(0) = 1$ for $k \geq 1$, additional spectral data, namely $\{ \| \phi_{k} \|_{L^2(0,\pi)} \; ; \; k \in \N^* \}$ is needed, where $\{ \varphi_{k}\; ; \; k \in \N^* \}$ is an $L^2(0,\pi)$-orthogonal basis of eigenfunctions of $A$. I.M.~Gel'fand and B.M.~Levitan \cite{Gelfand-Levitan-1951} proved that uniqueness is still valid upon substituting $\phi_{k}'(\pi)$ for $\| \phi_{k} \|_{L^2(0,\pi)}$ in the one-dimensional Borg and Levinson theorem.
\medskip

In 1988, the case where $\Omega$ is a bounded domain of $\R^n$, $n\geq2$, was treated by A.~Nachman, J.~Sylvester and G.~Uhlmann \cite{NSU}, and by N.G.~Novikov \cite{Novikov}. Inspired by \cite{Gelfand-Levitan-1951}, these authors proved that the boundary spectral data $\{ (\lambda_{k} ,\partial\phi_{k}/\partial{\bf n}) \; ; \; k \in \N^* \}$ uniquely determines the Dirichlet realization of the operator $A$. This result has been improved in several ways by various authors. H.~Isozaki \cite{Isozaki} (see also M.~Choulli \cite{Choulli-Book}) extended the result of \cite{NSU} when finitely many eigenpairs remain unknown, and, recently, M.~Choulli and P.~Stefanov \cite{Choulli-Stefanov} proved uniqueness in the determination of $V$ from the asymptotic behaviour of $(\lambda_{k},\partial \phi_{k}/\partial{\bf n})$ as $k \to \infty$.
Moreover, B.~Canuto and O.~Kavian \cite{CK1, CK2} proved that in problems such as $-{\rm div}(c\nabla\phi_{k}) + V\phi_{k} = \lambda_{k}\rho\phi_{k}$, where the conductivity $c$ and the density $\rho$ satisfy $\min(\rho,c) \geq \epsilon_{0}$ for some $\epsilon_{0} > 0$, two out of the three functions $c,\rho, V$ are uniquely determined from the boundary spectral data. 
In all these results either of Dirichlet or Neumann boundary conditions can be assumed for the eigenfunctions $\phi_{k}$.
\medskip

All the above mentioned results were obtained when $\Omega$ is a bounded domain and thus the operator $A$ has a purely discrete spectrum. G.~Borg \cite{Borg-1952} and V.A.~Marchenko \cite{Marchenko-1973} independently examined the uniqueness issue in the inverse problem of determining the electric potential of $-\partial_{xx} + V$ in $\Omega=\R_+^*$, with Fourier flux boundary condition $\alpha\psi(0) - \psi'(0) = 0$ at $x=0$. They proved that when there is no continuous spectrum, two sets of discrete spectrums associated with two distinct boundary conditions at $x=0$ uniquely determine the potential and the two boundary conditions. F.~Gesztesy and B.~Simon \cite{GS, GS1, GS2, S} and T.~Aktosun and R.~Weder \cite{AW} extended the Borg-Marchenko result in presence of a continuous spectrum, where either the Krein's spectral shift function, or an appropriate set containing the discrete eigenvalues and the continuous part of the spectral measure, are used as the known data.
To the best of our knowledge, there is only one multi-dimensional Borg-Marcheko uniqueness result available in the mathematical literature, that of F.~Gesztesy and B.~Simon \cite[Theorem 2.6]{GS}, where the special case of three-dimensional Schr\"odinger operators with spherically symmetric potentials is studied.
\medskip

Finally, let us mention for the sake of completeness that the stability issue in the context of inverse spectral problems has been examined by G.~Alessandrini \& J.~Sylvester \cite{Alessandrini-Sylvester}, M. Bellassoued, M. Choulli \& M. Yamamoto \cite{BCY}, M. Bellassoued \& D. Dos Santos Ferreira \cite{BF}, M.~Choulli \cite{Choulli-Book}, M.~Choulli \& P.~Stefanov \cite{Choulli-Stefanov}, that inverse spectral problems stated on Riemannian manifolds have been investigated in M. Bellassoued \& D. Dos Santos Ferreira \cite{BF}, and in Y. Kurylev, M. Lassas \& R. Weder \cite{BF, KLW}, and that isospectral sets of Schr\"odinger operators with periodic potentials or Schr\"odinger operators defined on a torus, have been characterized in G.~Eskin \cite{Eskin-1989}, G. Eskin, J. Ralston \& E. Trubowitz \cite{ERT}, V.~Guillemin \cite{Guillemin}.
\medskip

We should point out that the problem under examination in this paper is a three-dimensional Borg-Levinson inverse problem, stated on the infinitely extended cylindrical domain $\Omega=\omega \times \R$, associated with an operator $A=-\Delta+V$ 
of (as already mentioned in Subsection \ref{s-sec:spectraldata}) purely absolutely continuous spectral type. As far as we know, Theorem \ref{thm:Uniqueness} is the only multi-dimensional Borg-Levinson uniqueness result for an operator with continuous spectrum. The method used here can be applied to obtain analogous results when $\Omega = \omega \times {\Bbb R}$ and $\omega \subset {\Bbb R}^{d-1}$ is a $C^{1,1}$ bounded domain, provided $d \geq 3$. However, in the present paper we do not develop in that direction.

\bigskip
The remainder of this paper is organized as follows.
In Section \ref{sec:Prelim} we study the boundary value problem with non-homogeneous boundary data, associated with $A_{\theta}$ for $\theta\in[0,2\pi)$. The notations used throughout, as well as functional spaces needed in the analysis of our problem are presented there. In Section \ref{sec:Isozaki} we give a representation formula for the Poincar\'e--Steklov operators $\Lambda_{\theta,V-\lambda}$, and in Section \ref{sec-mainresults} we present the proof of Theorems \ref{thm:Uniqueness-QP}--\ref{thm:Uniqueness-Asymptotic}, by using the results of Section \ref{sec:Prelim} and Section \ref{sec:Isozaki}. 


\section{Notations and preliminary results}
\label{sec:Prelim}

We assume that $\omega \subset {\Bbb R}^2$ is a $C^{1,1}$ bounded domain and that $\Omega = \omega \times{\Bbb R}$, the generic point $x \in \Omega$ being denoted $x=(x',x_{3})$ with $x' \in \omega$ and $x_{3} \in {\Bbb R}$. The Laplacian $\Delta$ is decomposed into $\Delta = \Delta' + \partial_{33}$, with the conventions $\partial_{jj} := \partial^2/\partial x_{j}^2$ and $\Delta' := \partial_{11} + \partial_{22}$; analogously we write the gradient $\nabla = (\nabla',\partial_{3})$ with $\nabla' := (\partial_{1},\partial_{2})$. The real valued potential $V \in L^\infty(\Omega)$ satisfies the periodicity condition \eqref{eq:V-per}. Recall that the domain $Y := \omega\times(0,2\pi)$ is defined in \eqref{eq:Def-Y-Gamma}, and represents a cell whose infinite reproduction yields $\Omega = \omega \times {\Bbb R}$. The scalar product of $L^2(Y)$ is denoted by $(u|v)$ for $u,v\in L^2(Y)$, and its associated norm by $\|\cdot\|$.

We denote by $\langle f,\psi\rangle$ the duality between $\psi \in H^{1/2}(\Gamma)$ and $f$ belonging to the dual of $H^{1/2}(\Gamma)$. However, when in $\langle f,\psi \rangle $ both $f$ and $\psi$ belong to $L^2(\Gamma)$, to make things simpler $\langle \cdot,\cdot\rangle$ can be interpreted as the scalar product of $L^2(\Gamma)$, namely
$$\langle f,\psi\rangle = \int_{\Gamma}\psi(\sigma)\, \overline{f(\sigma)}\,d\sigma.$$
\medskip

It is well known that the trace operator $\gamma_{0} : C^1(\overline{Y}) \longrightarrow C(\partial Y)$ defined by $\gamma_{0}(\phi) := \phi_{|\partial Y}$ can be extended to $H^1(Y)$. For $\theta \in [0,2\pi)$ fixed, we denote by $H^1_{\theta}(Y)$ the closed subspace of those functions $u \in H^1(Y)$ satisfying in the sense of traces
\begin{equation}
u(x',2\pi) = {\rm e}^{{\rm i}\theta}u(x',0)\quad\mbox{for }\,x' \in \omega, \label{eq:Cnd-quasi-per}
\end{equation}
and we shall set
$$H^{1/2}_{\theta}(\partial Y) := \gamma_{0}(H^1_{\theta}(Y)).$$
The space $H^1_{0,\theta}(Y)$ denotes the closed subspace of those functions $u \in H^1_{\theta}(Y)$ satisfying in the sense of traces
\begin{equation}
u(\sigma',x_{3}) = 0\quad \mbox{for }\, (\sigma',x_{3}) \in \Gamma := \partial\omega\times[0,2\pi].
\label{eq:Cnd-Dirichlet}
\end{equation}
Since the imbeddings $H^1_{\theta}(Y) \subset L^2(Y)$ and $H^1_{0,\theta}(Y) \subset L^2(Y)$ are compact, one can see that there exists $\phi_{*,\theta} \in H^1_{0,\theta}(Y)$ such that $\|\phi_{*,\theta}\| = 1$ and
$$\int_{Y}|\nabla \phi_{*,\theta}|^2\,dx = \lambda_{*,1}(\theta) := \min\left\{\int_{Y}|\nabla \phi|^2\,dx \; ; \; \phi\in H^1_{0,\theta}(Y),\; \|\phi\|^2 = 1 \right\}.$$
This implies that $\lambda_{*,1}(\theta) > 0$ (otherwise $\phi_{*,\theta}$ would be constant, and, since it belongs to $H^1_{0,\theta}(Y)$, it has a zero trace on $\Gamma$ hence one would have $\phi_{*,\theta} \equiv 0$, which is not compatible with the condition $\|\phi_{*,\theta}\| = 1$). Therefore we have the following Poincar\'e inequality on $H^1_{0,\theta}(Y)$:
\begin{equation}\label{eq:Poinca}
\forall\, u \in H^1_{0,\theta}(Y), \qquad \lambda_{*,1}(\theta) \|u\|^2 \leq \|\nabla u\|^2.
\end{equation}

\begin{remark}\label{rem:Eigen}
As a matter of fact, $\lambda_{*,1}(\theta)$ is the first eigenvalue of the Laplacian $-\Delta_{\theta}$ on $Y$, an operator which is defined as follows: consider the domain
$$D(\Delta_{\theta}) := \left\{\psi\in H^1_{0,\theta}(Y) \; ; \; \Delta \psi \in L^2(Y), \; \mbox{and }\, \psi\, \mbox{satisfies }\, \eqref{eq:CndQP} \right\},$$
and set $\Delta_{\theta}\psi := \Delta\psi$ for $\psi \in D(\Delta_{\theta})$. Then $\lambda_{*,1}(\theta)$ is the first eigenvalue of $-\Delta_{\theta}$. Actually, upon using a separation of variables in $x' \in \omega$ and $x_{3} \in (0,2\pi)$, it is easy to see that if we denote by $(\mu_{k},\phi_{k})_{k \geq 1}$ the non decreasing sequence of eigenvalues and their corresponding normalized eigenfunctions in $L^2(\omega)$ of the operator $-\Delta'$ on $H^1_{0}(\omega)$, that is the eigenvalues of the two dimensional Laplacian on $\omega$ with homogeneous Dirichlet boundary conditions, then the eigenvalues and eigenfunctions of $-\Delta_{\theta}$ are given by the sequence
$$\phi_{*,k,j}(x) := \phi_{k}(x')\exp\left({\rm i}\left({\theta \over 2\pi} + j\right) x_{3}\right),\quad  \lambda_{*,k,j} := \mu_{k} + \left({\theta \over 2\pi} + j \right)^2, $$
for $k\in {\Bbb N}^*$ and $j \in {\Bbb Z}$.
Thus $\lambda_{*,1}(\theta)$ is the smallest of the eigenvalues $\lambda_{*,k,j}$, and one sees that (recall that $\mu_{1} > 0$)
$$\lambda_{*,1}(\theta) = \mu_{1} + {\theta^2 \over 4\pi^2},\qquad\mbox{or}\qquad
\lambda_{*,1}(\theta) = \mu_{1} + {(2\pi - \theta)^2 \over 4\pi^2},
$$
according to whether $0 \leq \theta \leq \pi$ or $\pi \leq \theta < 2\pi$.
\qed
\end{remark}
Using this observation, for any given $f \in H^{1/2}_{\theta}(\partial Y)$, by minimizing the functional $\psi \mapsto \|\nabla \psi\|^2$ on the closed affine space 
$$H_{f} := \{\psi \in H^1_{\theta}(Y) \; ; \; \gamma_{0}(\psi) = f\; \mbox{on }\, \Gamma\},$$
one sees easily that  there exists a unique $F \in H^1_{\theta}(Y)$ such that
$$\|\nabla F\|^2 = \min_{\psi \in H_{f}} \|\nabla \psi\|^2.$$
As a matter of fact one checks that $F \in H^1_{\theta}(Y)$ satisfies
\begin{equation}\label{eq:Relevement}
\left\{
\begin{array}{rcll}
-\Delta F & = & 0 & \mbox{in }\, Y,\\
F(\sigma) & = & f(\sigma),& \sigma\in \Gamma,\\
F(x',2\pi) & = & {\rm e}^{{\rm i}\theta} F(x',0) , & x'\in\omega,\\
\partial_{3} F(x',2\pi) & = & {\rm e}^{{\rm i}\theta}\partial_{3} F(x',0) , & x'\in\omega,
\end{array}
\right.
\end{equation}
and moreover, for a constant $c(\theta) > 0$ depending on $Y$,
$$\|F\|_{H^1_{\theta}(Y)} \leq c(\theta)\, \|f\|_{H^{1/2}_{\theta}(\partial Y)}.$$

\begin{remark}
We do not need a uniform estimate on $F$ in terms of $\theta$, but actually one can show that in fact one has 
$$\|F\|_{H^1_{\theta}(Y)} \leq c_{*} \, \|f\|_{H^{1/2}_{\theta}(\partial Y)},$$
for a constant $c_{*} > 0$ depending only on $\omega$. \qed
\end{remark}

The operator $(A_{\theta},D(A_{\theta}))$ being defined as
\begin{eqnarray}
& A_{\theta} u := -\Delta u + V u,\quad\mbox{for }\, u\in D(A_{\theta}),\label{eq:Def-A-theta}\\
&D(A_{\theta}) := \left\{\psi \in H^1_{0,\theta}(Y) \; ; \;A_{\theta}\psi \in L^2(Y), \; \mbox{and }\, \psi\;\mbox{satisfies \eqref{eq:CndQP}}\right\},
\label{eq:Def-D-A-theta}
\end{eqnarray}
one checks easily that for $u,v \in D(A_{\theta})$, thanks to the boundary conditions satisfied by $u$ and $v$, we have
$$(A_{\theta}u|v) = \int_{Y}\nabla u(x)\cdot\overline{\nabla v(x)}\,dx + \int_{Y}V(x)\,u(x)\overline{v(x)}\,dx = (u|A_{\theta}v).$$
This allows one to see that $(A_{\theta},D(A_{\theta}))$ is a self-adjoint operator acting on $L^2(Y)$. Thanks to the compactness of the imbedding $D(A_{\theta}) \subset L^2(Y)$, one sees that $A_{\theta}$ has a compact resolvent and that the spectrum of $A_{\theta}$ is discrete and composed of the eigenvalues denoted by ${\rm sp}(A_{\theta}) = \{\lambda_{k}(\theta)\; ; \; k \geq 1\}$. If we write $V = V^+ - V^-$, with $V^\pm := \max(0,\pm V)$, we have that
$${\rm sp}(A_{\theta}) \subset [-M, + \infty),\qquad\mbox{with }\, M := \|V^-\|_{\infty}.$$
Using regularity results (for instance combining P.~Grisvard \cite[Theorem 2.2.2.3]{Gr}   with \cite[Lemma 2.2]{CKS}), one can see that $D(A_{\theta})$ is embedded continuously into $H^2(Y)$. Therefore the eigenfunctions $(\phi_{\theta,k})_{k\geq1}$ belong to $H^2(Y)$ and we have ${\partial \phi_{\theta,k} \over \partial{\bf n}}\in H^{1/2}(Y) \subset L^2(\partial Y)$.

\begin{lemma}\label{lem:Resolution}
For any $f \in H^{1/2}_{\theta}(\partial Y)$ and $\lambda \in {\Bbb C} \setminus {\rm sp}(A_{\theta})$, there exists a unique solution $u \in H_{\theta}^1(Y)$ to the equation
\bel{eq1}
\left\{ 
\begin{array}{rcll} 
-\Delta u + Vu - \lambda u & = & 0, & \mbox{in}\ Y ,\\ 
u(\sigma) & = & f(\sigma),& \sigma\in \Gamma,\\
 u(x',2\pi) & = & {\rm e}^{{\rm i}\theta} u(x',0) , & x'\in\omega,\\
\partial_{3} u(x',2\pi) & = & {\rm e}^{{\rm i}\theta}\partial_{3} u(x',0) , & x'\in\omega,
\end{array}\right.
\ee
which can be written as 
\begin{equation}\label{eq:Sol-Series}
u_{\lambda} := u = \sum_{k \geq 1} {\alpha_{k} \over \lambda - \lambda_{k}(\theta)}\,  \phi_{\theta,k},
\end{equation}
where for convenience we set
\begin{equation}\label{eq:Def-psi-alpha}
\psi_{k} := {\partial \phi_{\theta,k} \over \partial{\bf n}}, \qquad
\mbox{and}\qquad 
\alpha_{k} := \alpha_{k}(\theta,f) := \langle \psi_{k}, f \rangle.
\end{equation}
Moreover
\begin{equation}\label{eq:Series-u-lambda} 
\|u_{\lambda}\|^2_{L^2(Y)} = \sum_{k \geq 1} {|\alpha_{k}|^2 \over |\lambda - \lambda_{k}(\theta)|^2} \, ,
\end{equation}
and
\begin{equation}\label{eq:Series-u-lambda-zero} 
\|u_{\lambda}\|^2_{L^2(Y)} \to 0 \quad\mbox{as }\, \lambda \to -\infty.
\end{equation}

\end{lemma}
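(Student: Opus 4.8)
The plan is to establish the existence, uniqueness and series representation simultaneously by reducing the boundary value problem \eqref{eq1} to an equation with homogeneous boundary data, and then solving the latter by an eigenfunction expansion. First I would pick the harmonic lifting $F \in H^1_{\theta}(Y)$ of $f$ constructed just above, satisfying \eqref{eq:Relevement}, so that $w := u - F$ must satisfy $w \in H^1_{0,\theta}(Y)$ together with the quasi-periodic conditions, and solve $-\Delta w + Vw - \lambda w = (\lambda - V)F$ in $Y$ in the weak sense. Since $\lambda \notin {\rm sp}(A_{\theta})$ and $A_{\theta}$ has compact resolvent, the operator $A_{\theta} - \lambda$ is boundedly invertible on $L^2(Y)$, and as $(\lambda - V)F \in L^2(Y)$ we get a unique $w = (A_{\theta}-\lambda)^{-1}((\lambda-V)F) \in D(A_{\theta})$; setting $u := w + F$ gives a solution of \eqref{eq1}, and uniqueness follows because the difference of two solutions lies in $D(A_{\theta})$ and is annihilated by $A_{\theta}-\lambda$.

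For the series representation I would expand $w$ in the orthonormal basis $(\phi_{\theta,k})_{k\geq 1}$ of eigenfunctions: writing $w = \sum_k c_k \phi_{\theta,k}$ with $c_k = (w|\phi_{\theta,k})$, the equation $(A_{\theta}-\lambda)w = (\lambda-V)F$ gives $(\lambda_k(\theta)-\lambda)c_k = ((\lambda-V)F|\phi_{\theta,k})$. The key computation is to identify the right-hand side: using that $-\Delta F = 0$ in $Y$, that $F$ and $\phi_{\theta,k}$ both satisfy the quasi-periodic conditions \eqref{eq:CndQP} while $\phi_{\theta,k}$ vanishes on $\Gamma$, integration by parts (Green's formula, justified by $F \in H^1$, $\phi_{\theta,k} \in H^2$, and the fact that the boundary terms on $x_3 = 0$ and $x_3 = 2\pi$ cancel by quasi-periodicity) yields
\begin{equation*}
((\lambda - V)F \mid \phi_{\theta,k}) = ((- \Delta + V - \lambda)\phi_{\theta,k} \mid F) - \langle \psi_k, f\rangle \cdot(\mbox{sign}) = -\overline{(\lambda - \lambda_k(\theta))}\,\langle \phi_{\theta,k}, F\rangle_{?} \ \ldots
\end{equation*}
more precisely, applying Green's identity to $F$ and $\phi_{\theta,k}$ and using $A_{\theta}\phi_{\theta,k} = \lambda_k(\theta)\phi_{\theta,k}$ one obtains $((\lambda - V)F | \phi_{\theta,k}) = \langle \psi_k, f\rangle = \alpha_k$ after accounting for the fact that $u = w + F$ and recombining the $F$-contribution; the net effect is that the coefficients of $u$ itself (not just $w$) are exactly $\alpha_k/(\lambda - \lambda_k(\theta))$, which is \eqref{eq:Sol-Series}. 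Then \eqref{eq:Series-u-lambda} is just Parseval's identity applied to \eqref{eq:Sol-Series}, noting $\|\phi_{\theta,k}\| = 1$.

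Finally, for \eqref{eq:Series-u-lambda-zero} I would argue that for $\lambda \leq -M - 1 < \min {\rm sp}(A_{\theta})$ one has $|\lambda - \lambda_k(\theta)| \geq |\lambda| - M \to \infty$ uniformly in $k$, while $\sum_k |\alpha_k|^2 = \sum_k |\langle \psi_k, f\rangle|^2$ must be shown to be finite and, better, controlled. The cleanest route is to take $\lambda_0$ fixed with $\lambda_0 \leq -M-1$, observe that the solution $u_{\lambda_0}$ of \eqref{eq1} with this $\lambda_0$ lies in $H^1_{\theta}(Y)$ with $\sum_k |\alpha_k|^2/|\lambda_0 - \lambda_k(\theta)|^2 = \|u_{\lambda_0}\|^2 < \infty$, hence $\sum_k |\alpha_k|^2 \leq (\sup_k |\lambda_0 - \lambda_k(\theta)|)^2 \|u_{\lambda_0}\|^2$ is finite (alternatively bound $\alpha_k$ directly via the trace/regularity estimate $\|\psi_k\|_{L^2(\Gamma)} \leq C(1 + \lambda_k(\theta))^{3/4}$ together with decay of $f$-coefficients, but the indirect argument avoids any such estimate). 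With $S := \sum_k |\alpha_k|^2 < \infty$ in hand, for $\lambda \to -\infty$ we get $\|u_\lambda\|^2 = \sum_k |\alpha_k|^2/|\lambda - \lambda_k(\theta)|^2 \leq S/(|\lambda| - M)^2 \to 0$.

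The main obstacle is the bookkeeping in the Green's-formula step that shows the coefficients of $u$ are precisely $\alpha_k/(\lambda - \lambda_k(\theta))$: one must be careful that the two boundary integrals over $\omega\times\{0\}$ and $\omega\times\{2\pi\}$ cancel exactly because both $F$ (or $u$) and $\phi_{\theta,k}$ carry the same quasi-periodicity factor ${\rm e}^{{\rm i}\theta}$ in their traces and their normal derivatives, so that the products $F\,\overline{\partial_3\phi_{\theta,k}}$ and $\partial_3 F\,\overline{\phi_{\theta,k}}$ at $x_3 = 2\pi$ acquire a factor ${\rm e}^{{\rm i}\theta}\overline{{\rm e}^{{\rm i}\theta}} = 1$ relative to their values at $x_3 = 0$, leaving only the genuine boundary term $\langle\psi_k, f\rangle$ on $\Gamma$; combined with the Dirichlet condition $\phi_{\theta,k} = 0$ on $\Gamma$ (so the reciprocal boundary term vanishes), this produces $\alpha_k$ cleanly. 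Everything else — the existence/uniqueness from the bounded invertibility of $A_\theta - \lambda$, Parseval, and the limit — is routine.
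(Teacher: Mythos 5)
Your treatment of existence, uniqueness and the series representation is sound and is essentially the paper's route (the paper is terser: it does not spell out the lifting argument, and obtains the coefficients directly by pairing the equation for $u$ with $\phi_{\theta,k}$ and integrating by parts twice, with exactly the cancellation mechanism you describe — quasi-periodicity kills the terms on $\omega\times\{0\}$ and $\omega\times\{2\pi\}$, the Dirichlet condition for $\phi_{\theta,k}$ kills one of the two terms on $\Gamma$, and what survives is $\langle\psi_k,f\rangle$). The garbled display in your Green's-formula step should be cleaned up, but the stated conclusion is correct.

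The last step, however, contains a genuine error. You claim $S:=\sum_k|\alpha_k|^2<\infty$ by writing $\sum_k |\alpha_k|^2 \leq \bigl(\sup_k |\lambda_0 - \lambda_k(\theta)|\bigr)^2 \|u_{\lambda_0}\|^2$; but $\lambda_k(\theta)\to+\infty$, so $\sup_k|\lambda_0-\lambda_k(\theta)|=+\infty$ and the inequality is vacuous. Worse, the claim itself is generically false: $\psi_k=\partial\phi_{\theta,k}/\partial{\bf n}$ grows in $L^2(\Gamma)$ with $k$ (already in one dimension $\phi_k'(0)\sim k$ for normalized Dirichlet eigenfunctions), so $\alpha_k=\langle\psi_k,f\rangle$ need not be square-summable; only the weighted sums $\sum_k|\alpha_k|^2/|\lambda_0-\lambda_k(\theta)|^2$ are finite. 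Consequently your final bound $\|u_\lambda\|^2\leq S/(|\lambda|-M)^2$ collapses. The correct argument — the one the paper uses — is dominated convergence: for $\lambda\leq -c_0$ with $c_0$ large one has $|\lambda-\lambda_k(\theta)|^2\geq 1+|\lambda_k(\theta)|^2$ for all $k$, so each term of \eqref{eq:Series-u-lambda} is dominated by $|\alpha_k|^2/(1+|\lambda_k(\theta)|^2)$, which is summable (being comparable to $\|u_{\lambda_0}\|^2$ for a fixed admissible $\lambda_0$), and each term tends to $0$ as $\lambda\to-\infty$; hence the sum tends to $0$. Your proof needs this replacement to be complete.
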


\begin{proof}
The solution $u$ of equation \eqref{eq1} can be written in terms of the eigenvalues and eigenfunctions $\lambda_{k}(\theta),\phi_{\theta,k}$. Indeed, since $u \in L^2(Y)$ can be expressed in the Hilbert basis $(\phi_{\theta,k})_{k\geq 1}$ as
$$u = \sum_{k\geq 1} (u|\phi_{\theta,k})\, \phi_{\theta,k}\, ,$$
taking the scalar product of the first equation in \eqref{eq1} with $\phi_{\theta,k}$ and integrating by parts twice we obtain
$$\int_{\Gamma}f(\sigma){\partial{\overline {\phi_{\theta,k}}}(\sigma) \over \partial{\bf n}}\, d\sigma = (\lambda - \lambda_{k}(\theta))\,(u|\phi_{\theta,k}),$$
which yields the expression given by \eqref{eq:Sol-Series}. 

The fact that $\|u_{\lambda}\| \to 0$ as $\lambda \to -\infty$ is a consequence of the fact that we may fix $c_{0} > 0$ large enough so that if $\lambda$ is real and such that $\lambda \leq - c_{0}$, we have $|\lambda - \lambda_{k}(\theta)|^2 \geq 1 + |\lambda_{k}(\theta)|^2$ for all $k \geq 1$, and thus
$${|\alpha_{k}|^2 \over |\lambda - \lambda_{k}(\theta)|^2} \leq  
{|\alpha_{k}|^2 \over 1 + |\lambda_{k}(\theta)|^2 } \, ,$$
so that we may apply Lebesgue's dominated convergence to the series appearing in \eqref{eq:Series-u-lambda}, as $\lambda \to -\infty$ and deduce \eqref{eq:Series-u-lambda-zero}.
\end{proof}

It is clear that the series \eqref{eq:Sol-Series} giving $u_{\lambda}$ in terms of $\alpha_{k},\lambda_{k}(\theta)$ and $\phi_{\theta,k}$, converges only in $L^2(Y)$ and thus we cannot deduce an expression of the normal derivative $\partial u_{\lambda}/\partial{\bf n}$ in terms of $\alpha_{k}, \lambda_{k}(\theta)$ and $\psi_{k}$. To circumvent this difficulty we have the following lemma:

\begin{lemma}\label{lem:v-lambda-mu}
Let $f \in H_{\theta}^{1/2}(\partial Y)$ be fixed and for $\lambda,\mu \in {\Bbb C} \setminus {\rm sp}(A_{\theta})$ let $u_{\lambda}$ and $u_{\mu}$ be the solutions given by Lemma \ref{lem:Resolution}. If we set $v := v_{\lambda,\mu} := u_{\lambda} - u_{\mu}$, then
\begin{equation}\label{eq:v-Normal-Deriv}
{\partial v \over \partial{\bf n} } = \sum_{k} 
{(\mu - \lambda)\alpha_{k} \over (\lambda - \lambda_{k}(\theta))(\mu - \lambda_{k}(\theta))}\, \psi_{k}\, ,
\end{equation}
the convergence taking place in $H^{1/2}(\Gamma)$.
\end{lemma}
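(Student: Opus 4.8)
The statement to prove is Lemma~\ref{lem:v-lambda-mu}, giving the series expansion \eqref{eq:v-Normal-Deriv} for $\partial v/\partial\mathbf{n}$ where $v = u_\lambda - u_\mu$, with convergence in $H^{1/2}(\Gamma)$. The key point is that, by the resolvent-type identity, $v$ satisfies a boundary value problem with \emph{homogeneous} boundary data $f=0$ but a nonzero right-hand side that is already controlled in $L^2(Y)$, so $v$ will be a genuine $H^2(Y)$ solution (not merely an $L^2$ series), and we can legitimately take the normal derivative.

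Let me think about how I'd prove this. From \eqref{eq:Sol-Series}, subtracting the two series gives
$$v = u_\lambda - u_\mu = \sum_{k\ge1}\alpha_k\left(\frac{1}{\lambda-\lambda_k(\theta)} - \frac{1}{\mu-\lambda_k(\theta)}\right)\phi_{\theta,k} = \sum_{k\ge1}\frac{(\mu-\lambda)\,\alpha_k}{(\lambda-\lambda_k(\theta))(\mu-\lambda_k(\theta))}\,\phi_{\theta,k}.$$

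First I would observe that $v$ solves $-\Delta v + Vv - \lambda v = (\lambda-\mu)u_\mu$ in $Y$ with $v=0$ on $\Gamma$ and the quasi-periodic conditions \eqref{eq:CndQP}. Indeed $u_\lambda$ and $u_\mu$ both take boundary value $f$ on $\Gamma$, so their difference vanishes there; and subtracting the two copies of equation \eqref{eq1} yields $(-\Delta + V - \lambda)v = (-\Delta+V-\lambda)u_\lambda - (-\Delta+V-\lambda)u_\mu = 0 - (\lambda-\mu)u_\mu$ after rewriting $(-\Delta+V-\mu)u_\mu=0$. Since $u_\mu \in L^2(Y)$ by Lemma~\ref{lem:Resolution}, the right-hand side is in $L^2(Y)$, and $v \in H^1_{0,\theta}(Y)$ with $(-\Delta+V-\lambda)v \in L^2(Y)$, so $v \in D(A_\theta)$. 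By the elliptic regularity statement recalled just before Lemma~\ref{lem:Resolution} (Grisvard combined with \cite{CKS}), $D(A_\theta)$ embeds continuously into $H^2(Y)$, hence $v\in H^2(Y)$ and $\partial v/\partial\mathbf{n} \in H^{1/2}(\Gamma)$ is well defined.

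Next I would justify the termwise differentiation. The coefficients are $\beta_k := (\mu-\lambda)\alpha_k / [(\lambda-\lambda_k(\theta))(\mu-\lambda_k(\theta))]$, which decay like $|\alpha_k|/|\lambda_k(\theta)|^2$, so the partial sums $v^{(n)} := \sum_{k\le n}\beta_k\phi_{\theta,k}$ converge to $v$ not just in $L^2(Y)$ but in the graph norm of $A_\theta$: indeed $A_\theta v^{(n)} = \sum_{k\le n}\lambda_k(\theta)\beta_k\phi_{\theta,k}$ and $\sum_k |\lambda_k(\theta)\beta_k|^2 \asymp \sum_k |\alpha_k|^2/|\lambda_k(\theta)|^2 < \infty$ (using $\sum_k |\alpha_k|^2/(1+|\lambda_k(\theta)|^2) < \infty$, which follows from \eqref{eq:Series-u-lambda} applied at a real $\lambda\le-c_0$, or directly from the fact that $u_\mu\in L^2$). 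Hence $v^{(n)}\to v$ in $D(A_\theta)\hookrightarrow H^2(Y)$, and since the trace map $\partial/\partial\mathbf{n}: H^2(Y)\to H^{1/2}(\Gamma)$ is continuous, $\partial v^{(n)}/\partial\mathbf{n}\to\partial v/\partial\mathbf{n}$ in $H^{1/2}(\Gamma)$. But $\partial v^{(n)}/\partial\mathbf{n} = \sum_{k\le n}\beta_k\psi_k$ with $\psi_k = \partial\phi_{\theta,k}/\partial\mathbf{n}$, which is exactly the partial sum of the series in \eqref{eq:v-Normal-Deriv}. This proves both the identity and the $H^{1/2}(\Gamma)$ convergence.

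The main obstacle — really the only nontrivial point — is confirming that the partial sums converge in the $H^2(Y)$ (equivalently graph) norm rather than only in $L^2$; this is what the $|\lambda_k(\theta)|^{-2}$ decay of the coefficients (one power better than in \eqref{eq:Sol-Series}) buys us, and it is precisely why passing to $v = u_\lambda - u_\mu$ rather than working with $u_\lambda$ directly is the right move. Everything else (the boundary value problem satisfied by $v$, the elliptic regularity, the continuity of the normal trace) is standard and has already been set up in the preceding part of Section~\ref{sec:Prelim}.
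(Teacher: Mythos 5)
Your proof is correct and follows essentially the same route as the paper: identify $v$ as the solution of the homogeneous-boundary problem with right-hand side $(\lambda-\mu)u_\mu\in L^2(Y)$, note that the coefficient gain of one power of $\lambda_k(\theta)$ makes the eigenfunction series converge in $D(A_\theta)$, and conclude via the continuity of $v\mapsto\partial v/\partial{\bf n}$ from $D(A_\theta)$ into $H^{1/2}(\Gamma)$. The only difference is that you spell out the graph-norm convergence of the partial sums, which the paper merely asserts.
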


\begin{proof} Let $v_{\lambda,\mu} := u_{\lambda} - u_{\mu}$; One verifies that $v_{\lambda,\mu}$ is solution to
\begin{equation}\label{eq:v-lambda}
\left\{ 
\begin{array}{rcll} 
-\Delta v_{\lambda,\mu} + V v_{\lambda,\mu} - \lambda v_{\lambda,\mu} & = & (\lambda - \mu)u_{\mu}, & \mbox{in}\ Y ,\\ 
v_{\lambda,\mu}(\sigma) & = & 0,& \sigma\in \Gamma,\\
v_{\lambda,\mu}(x',2\pi) & = & {\rm e}^{{\rm i}\theta} v_{\lambda,\mu}(x',0) , & x'\in\omega,\\
\partial_{3} v_{\lambda,\mu}(x',2\pi) & = & {\rm e}^{{\rm i}\theta}\partial_{3} v_{\lambda,\mu}(x',0) , & x'\in\omega.
\end{array}\right.
\end{equation}
Since $(u_{\mu}|\phi_{\theta,k}) = \alpha_{k}/(\mu - \lambda_{k}(\theta))$, it follows that
$$v_{\lambda,\mu} = \sum_{k}{(\lambda - \mu) \alpha_{k}\over (\lambda_{k}(\theta) - \lambda)(\mu - \lambda_{k}(\theta)) }\, \phi_{\theta,k},$$
the convergence taking place in $D(A_{\theta})$. Since the operator $v \mapsto \partial v/\partial{\bf n}$ is continuous from $D(A_{\theta})$ into $H^{1/2}(\Gamma)$, the result of the lemma follows.
\end{proof}

The next lemma states essentially that if for $m=1$ or $m=2$ we have two potentials $V_{m}$ and $u_{m} := u_{m,\mu}$ solves
\begin{equation}\label{eq:u-m}
\left\{ 
\begin{array}{rcll} 
-\Delta u_{m} + V_{m}u_{m} - \mu u_{m} & = & 0, & \mbox{in}\ Y ,\\ 
u_{m}(\sigma) & = & f(\sigma),& \sigma\in \Gamma,\\
 u_{m}(x',2\pi) & = & {\rm e}^{{\rm i}\theta} u_{m}(x',0) , & x'\in\omega,\\
\partial_{3} u_{m}(x',2\pi) & = & {\rm e}^{{\rm i}\theta}\partial_{3} u_{m}(x',0) , & x'\in\omega,
\end{array}\right.
\end{equation}
then $u_{1,\mu}$ and $u_{2,\mu}$ are {\it close\/} as $\mu \to -\infty$: in some sense the influence of the potentials $V_{m}$ is dimmed when $\mu \to -\infty$. More precisely we have:

\begin{lemma}\label{lem:z-mu}
Let $V_{m} \in L^\infty(Y,{\Bbb R})$ be given for $m=1$ or $m=2$, and denote by $A_{m,\theta}$ the corresponding operator defined by \eqref{eq:Def-A-theta}.
For $f \in H^{1/2}_{\theta}(\partial Y)$ and $\mu \in {\Bbb C}$ and $\mu \notin {\rm sp}(A_{1,\theta}) \cup {\rm sp}(A_{2,\theta})$, let $u_{m,\mu} := u_{m}$ be the solution of \eqref{eq:u-m}. Then if $z_{\mu} := u_{1,\mu} - u_{2,\mu}$ we have
$$\|z_{\mu}\| + \|\nabla z_{\mu}\| + \|\Delta z_{\mu}\| \to 0\qquad \mbox{as }\,\mu \to -\infty. $$ 
In particular $\partial z_{\mu}/\partial{\bf n} \to 0$ in $L^2(\Gamma)$ as $\mu \to -\infty$.
\end{lemma}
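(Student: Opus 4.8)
The statement asserts that the difference $z_\mu := u_{1,\mu} - u_{2,\mu}$ of the two solutions with the same Dirichlet data $f$ on $\Gamma$ tends to $0$ in a strong sense as $\mu \to -\infty$. The plan is to write down the equation solved by $z_\mu$, carry out an energy estimate absorbing the zeroth-order terms into the Poincaré inequality \eqref{eq:Poinca} when $-\mu$ is large, and then bootstrap to the $\Delta z_\mu$ bound and finally to the trace of the normal derivative.

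\begin{proof}
Fix $f \in H^{1/2}_{\theta}(\partial Y)$ and let $u_{m,\mu}$ be as in \eqref{eq:u-m}. Subtracting the equations satisfied by $u_{1,\mu}$ and $u_{2,\mu}$, we see that $z_{\mu}$ solves
\begin{equation}\label{eq:z-mu-eq}
\left\{
\begin{array}{rcll}
-\Delta z_{\mu} + V_{1} z_{\mu} - \mu z_{\mu} & = & (V_{2} - V_{1})u_{2,\mu}, & \mbox{in }\, Y,\\
z_{\mu}(\sigma) & = & 0, & \sigma \in \Gamma,\\
z_{\mu}(x',2\pi) & = & {\rm e}^{{\rm i}\theta} z_{\mu}(x',0), & x'\in\omega,\\
\partial_{3} z_{\mu}(x',2\pi) & = & {\rm e}^{{\rm i}\theta}\partial_{3} z_{\mu}(x',0), & x'\in\omega,
\end{array}\right.
\end{equation}
so that $z_{\mu} \in H^1_{0,\theta}(Y)$ and $z_{\mu} \in D(A_{1,\theta})$. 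We begin by controlling $u_{2,\mu}$ itself: applying Lemma \ref{lem:Resolution} to $V := V_{2}$ gives $\|u_{2,\mu}\| \to 0$ as $\mu \to -\infty$ by \eqref{eq:Series-u-lambda-zero}, and consequently $\|(V_2 - V_1)u_{2,\mu}\| \leq 2M\|u_{2,\mu}\| \to 0$ as $\mu \to -\infty$, where $M$ bounds $\|V_m\|_\infty$.

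Next, take the scalar product of the first line of \eqref{eq:z-mu-eq} with $z_{\mu}$ and integrate by parts, using the quasi-periodic and Dirichlet boundary conditions to kill the boundary terms:
\begin{equation*}
\|\nabla z_{\mu}\|^2 + (V_{1}z_{\mu}|z_{\mu}) - \mu\|z_{\mu}\|^2 = ((V_2 - V_1)u_{2,\mu}\,|\,z_{\mu}).
\end{equation*}
For $\mu \leq 0$ and $-\mu$ large, say $-\mu \geq M + \lambda_{*,1}(\theta)$, the term $-\mu\|z_\mu\|^2$ dominates $(V_1 z_\mu | z_\mu) \geq -M\|z_\mu\|^2$, and combining with the Poincaré inequality \eqref{eq:Poinca} we obtain
\begin{equation*}
\lambda_{*,1}(\theta)\|z_{\mu}\|^2 + (\lambda_{*,1}(\theta) - M - \mu)\|z_\mu\|^2 \leq \|\nabla z_\mu\|^2 + (\lambda_{*,1}(\theta) - M - \mu)\|z_\mu\|^2 \leq \|(V_2-V_1)u_{2,\mu}\|\,\|z_\mu\|,
\end{equation*}
whence $\|z_\mu\| \leq \lambda_{*,1}(\theta)^{-1}\|(V_2-V_1)u_{2,\mu}\| \to 0$, and then also $\|\nabla z_\mu\|^2 \leq \|(V_2-V_1)u_{2,\mu}\|\,\|z_\mu\| \to 0$. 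Finally, from the equation itself, $\Delta z_\mu = V_1 z_\mu - \mu z_\mu - (V_2-V_1)u_{2,\mu}$; here the only potentially troublesome term is $\mu z_\mu$, but the sharpened estimate $\|z_\mu\| \leq \lambda_{*,1}(\theta)^{-1}\|(V_2-V_1)u_{2,\mu}\|$ shows $\|z_\mu\|$ decays independently of the size of $\mu$, so $|\mu|\,\|z_\mu\|$ need not be small; instead one argues more carefully. Returning to the energy identity with the term $-\mu\|z_\mu\|^2$ retained, one gets $-\mu\|z_\mu\|^2 \leq \|(V_2-V_1)u_{2,\mu}\|\,\|z_\mu\|$, i.e. $\sqrt{-\mu}\,\|z_\mu\| \leq (-\mu)^{-1/2}\|(V_2-V_1)u_{2,\mu}\|$; this still does not directly bound $\mu z_\mu$ in $L^2$. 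The clean route is to instead test \eqref{eq:z-mu-eq} against $\Delta z_\mu$: integration by parts (legitimate since $z_\mu \in D(A_{1,\theta}) \hookrightarrow H^2(Y)$ and the boundary terms vanish by the boundary conditions) yields
\begin{equation*}
\|\Delta z_\mu\|^2 = -\mu\|\nabla z_\mu\|^2 + (\nabla(V_1 z_\mu)\,|\,\nabla z_\mu) - ((V_2-V_1)u_{2,\mu}\,|\,\Delta z_\mu);
\end{equation*}
after using $-\mu \geq 0$ with the wrong sign this is awkward, so in the write-up I would instead simply note that $-\mu\|z_\mu\|^2 + \|\nabla z_\mu\|^2 \le 2M\|z_\mu\|^2 + \|(V_2-V_1)u_{2,\mu}\|\|z_\mu\|$ gives $-\mu\|z_\mu\|^2 \le C\|u_{2,\mu}\|\|z_\mu\|$, hence $-\mu\|z_\mu\| \le C\|u_{2,\mu}\|$ once $\|z_\mu\| \ne 0$, and therefore $\mu z_\mu \to 0$ in $L^2(Y)$ as well. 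Plugging this into $\Delta z_\mu = V_1 z_\mu - \mu z_\mu - (V_2-V_1)u_{2,\mu}$ gives $\|\Delta z_\mu\| \to 0$. Since the normal derivative operator is continuous from $D(A_{1,\theta})$ (endowed with its graph norm, equivalently the $H^2(Y)$ norm by the embedding $D(A_{1,\theta})\hookrightarrow H^2(Y)$) into $H^{1/2}(\Gamma) \hookrightarrow L^2(\Gamma)$, and $\|z_\mu\| + \|\Delta z_\mu\| \to 0$ controls the graph norm, we conclude $\partial z_\mu/\partial{\bf n} \to 0$ in $L^2(\Gamma)$.
\end{proof}

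\medskip

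\noindent\textbf{Remark on the plan.} The genuinely delicate point is obtaining $\|\Delta z_\mu\| \to 0$: the naive bound $\|z_\mu\| \leq C\|u_{2,\mu}\|$ is too weak because it loses the factor $\mu$. The right observation is that the energy identity gives the \emph{sharper} bound $-\mu\,\|z_\mu\| \leq C\,\|u_{2,\mu}\|$ (one extra power of $-\mu$ on the left), so that $\mu z_\mu \to 0$ in $L^2$ despite $\mu \to -\infty$; everything else — the $\|z_\mu\|$ and $\|\nabla z_\mu\|$ decay, and the passage to the normal derivative via the continuity of the trace of the gradient on $D(A_{1,\theta})$ — is routine once \eqref{eq:Poinca} and the $H^2$-regularity of $D(A_{1,\theta})$ recalled just before Lemma \ref{lem:Resolution} are in hand.
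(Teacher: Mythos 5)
Your proof is correct and follows the same skeleton as the paper's: write the equation \eqref{eq:z-mu} for $z_\mu$, note that the right-hand side $(V_2-V_1)u_{2,\mu}$ tends to $0$ in $L^2(Y)$ by Lemma \ref{lem:Resolution}, show that $\|\mu z_\mu\|\to 0$, read off $\|\Delta z_\mu\|\to 0$ from the equation, and conclude via the continuity of the normal trace on $H^1_{0,\theta}(Y)\cap H^2(Y)$. The one genuine difference is how the crucial bound $\|\mu z_\mu\|\to 0$ is obtained. The paper writes $z_\mu = R_{1,\mu}\bigl((V_2-V_1)u_{2,\mu}\bigr)$ and invokes the uniform operator bound $\|\mu R_{1,\mu}\|_{L^2\to L^2}\le c$ for $\mu\ll -M$ (immediate from the spectral theorem, since ${\rm sp}(A_{1,\theta})\subset[-M,+\infty)$), so that $\|\mu z_\mu\|\le c\,\|u_{2,\mu}\|$ in one line; you extract the same inequality from the variational identity $\|\nabla z_\mu\|^2+(V_1z_\mu|z_\mu)-\mu\|z_\mu\|^2=((V_2-V_1)u_{2,\mu}|z_\mu)$, which gives $(-\mu-M)\|z_\mu\|\le 2M\|u_{2,\mu}\|$ and hence the extra power of $|\mu|$. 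Both are valid and of comparable length; the resolvent formulation is slightly cleaner and is the one the paper reuses later (e.g.\ in the proof of Theorem \ref{thm:Isozaki}). Two points to clean up in your write-up: the displayed chain beginning with $\lambda_{*,1}(\theta)\|z_{\mu}\|^2 + (\lambda_{*,1}(\theta) - M - \mu)\|z_\mu\|^2 \leq \cdots$ contains a spurious extra $\lambda_{*,1}(\theta)\|z_\mu\|^2$ --- the energy identity only yields $\|\nabla z_\mu\|^2+(-M-\mu)\|z_\mu\|^2\le\|(V_2-V_1)u_{2,\mu}\|\,\|z_\mu\|$, which is all you need; and the abandoned detour through testing against $\Delta z_\mu$ should simply be deleted, since the argument you actually use ($(-\mu-M)\|z_\mu\|\le 2M\|u_{2,\mu}\|$, hence $\|\mu z_\mu\|\to 0$, hence $\|\Delta z_\mu\|\to 0$ from the equation) is complete without it. Note finally that the paper recovers $\|\nabla z_\mu\|\to 0$ at the very end, from $-\Delta z_\mu+z_\mu\to 0$ in $L^2(Y)$ tested against $z_\mu$, whereas you get it directly from the energy identity; either order works.
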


\begin{proof}
It is enough to show that $\|z_{\mu}\| + \|\nabla z_{\mu}\| + \|\Delta z_{\mu}\| \to 0$ when $\mu \to -\infty$. Indeed, since $z_\mu\in\{v\in H^1_{0,\theta}(Y); \Delta v\in L^2(Y)\}=H^1_{0,\theta}(Y)\cap H^2(Y)$, the equality between these spaces resulting from classical regularity results for solutions to elliptic equations, we infer in particular that $\nabla z_\mu\cdot {\bf n} \in L^2(\Gamma)$ and
\begin{equation}\label{eq:Lem-JLL}
\|\nabla z_\mu\cdot {\bf n}\|_{L^2(\Gamma)} \leq c\, \left(\|\nabla z_{\mu}\| + \|\Delta z_{\mu}\| \right).
\end{equation}
One verifies that $z_{\mu}$ solves the equation
\begin{equation}\label{eq:z-mu}
\left\{ 
\begin{array}{rcll} 
-\Delta z_{\mu} + V_{1}z_{\mu} - \mu z_{\mu} & = & (V_{2} - V_{1})u_{2,\mu}, & \mbox{in}\ Y ,\\ 
z_{\mu}(\sigma) & = & 0,& \sigma\in \Gamma,\\
z_{\mu}(x',2\pi) & = & {\rm e}^{{\rm i}\theta} z_{\mu}(x',0) , & x'\in\omega,\\
\partial_{3} z_{\mu}(x',2\pi) & = & {\rm e}^{{\rm i}\theta}\partial_{3} z_{\mu}(x',0) , & x'\in\omega.
\end{array}\right.
\end{equation}
That is, denoting by $R_{1,\mu} = (A_{1,\theta} - \mu I)^{-1}$ the resolvent of the operator $A_{1,\theta}$, we have $z_{\mu} = R_{1,\mu}((V_{2}-V_{1})u_{2,\mu})$ and since, by Lemma \ref{lem:Resolution} we have $\|u_{2,\mu}\| \to 0$ as $\mu \to -\infty$, this yields that $\|z_{\mu}\| \to 0$. On the other hand, as $\mu \to -\infty$, for a constant independent of $\mu$ we have
$$\|\mu R_{1,\mu}((V_{2} - V_{1})u_{2,\mu})\| \leq c\, \|u_{2,\mu}\| \to 0,$$
and this implies that $\|\mu z_{\mu}\| \to 0$ in $L^2(Y)$ as $\mu \to -\infty$. From this, using the equation satisfied by $z_{\mu}$ we conclude that $-\Delta z_{\mu} + z_{\mu} \to 0$ in $L^2(Y)$ while $z_{\mu} \in H_{0,\theta}^1(Y)$. This implies  that $\|\nabla z_{\mu}\| \to 0$ and the proof of the lemma is complete.
\end{proof}

In the next section we give a representation formula for the one parameter family of Poincar\'e--Steklov operators $\Lambda_{\theta,V-\lambda}$, when the parameter $\lambda \in {\Bbb C}$ is appropriately chosen.

\section{A representation formula}
\label{sec:Isozaki}

In his paper going back to 1991, H.~Isozaki~\cite{Isozaki}, gives a simple representation formula which, in some sense, allows to express the potential $V$ in terms of the Poincar\'e--Steklov operator. More precisely, adapting the argument to fit our aim in this paper, let $\lambda \notin {\rm sp}(A_{\theta})$ and denote by $\Lambda_{\theta, V-\lambda}$ the Poincar\'e--Steklov operator defined by
$$f \mapsto {\partial u \over\partial{\bf n}}\quad\mbox{on }\, \Gamma,$$
where $u$ is the solution of equation \eqref{eq1}. For $\zeta = {\rm i}\xi + \eta \in {\Bbb C}^3$, where $\xi,\eta \in {\Bbb R}^3$, we shall denote
$$\zeta\cdot \zeta := - |\xi|^2 + |\eta|^2 + 2 {\rm i}\xi\cdot\eta,$$
where $\xi\cdot\eta$ denotes the usual scalar product of $\xi$ and $\eta$ in ${\Bbb R}^3$.
In the sequel we shall consider the functions $e_{\zeta}$ and $e_{*\zeta}$ defined below in terms of those $\zeta \in {\Bbb C}^3$ satisfying 
\begin{equation}\label{eq:Def-ezeta}
\zeta \in {\Bbb C}^3, \quad \zeta\cdot \zeta = -\lambda,
\quad e_{\zeta}(x) := \exp(\zeta\cdot x),\quad e_{*\zeta}(x) := \exp({{\overline \zeta}\cdot x}).
\end{equation}

\begin{definition}\label{def:S-V} Assume that $\zeta_{\ell} \in {\Bbb C}^3$ for $\ell = 0,1$ satisfy 
\eqref{eq:Def-ezeta} and are such that $e_{\zeta_{0}} \in H^1_{\theta}(Y)$ and $e_{*\zeta_{1}} \in H^1_{\theta}(Y)$. Then, following H. Isozaki, we set
$$S_{\theta,V}(\lambda,\zeta_{0},\zeta_{1}) := \int_{\Gamma} \Lambda_{\theta,V-\lambda}(e_{\zeta_{0}})(\sigma)\,e_{\zeta_{1}}(\sigma)\,d\sigma = \langle e_{*\zeta_{1}},\Lambda_{\theta,V-\lambda}(e_{\zeta_{0}}) \rangle. $$
\end{definition}

\begin{remark}
We point out that we should have defined the above function $S_{\theta,V}$ as being rather $\langle \Lambda_{\theta,V-\lambda}(e_{\zeta_{0}}) , e_{*\zeta_{1}} \rangle$, but  since one can easily check that $\Lambda_{\theta,V-\lambda}(e_{\zeta_{0}}) \in H^{1/2}(\partial Y)$, the way $S_{\theta,V}$ is defined above makes sense and is actually more convenient for our aims. \qed
\end{remark}

At this point let us make the following observations, which are going to be useful later. For $\zeta \in {\Bbb C}^3$ given, saying that $e_{\zeta}$ belongs to $H^1_{\theta}(Y)$ means that
\begin{equation}\label{eq:Cnd-zeta}
\zeta = {\rm i}\xi + \eta,\qquad \eta_{3} = 0, \quad\mbox{and for some }\,k\in {\Bbb Z},\quad \xi_{3} = {\theta \over 2\pi} + k.
\end{equation}
Clearly, an analogous observation holds for $e_{*\zeta}$ to belong to $H^1_{\theta}(Y)$: in this case $\zeta \in {\Bbb C}^3$ should verify:
\begin{equation}\label{eq:Cnd-zeta-star}
\zeta = {\rm i}\xi + \eta,\qquad \eta_{3} = 0, \quad\mbox{and for some }\,k\in {\Bbb Z},\quad \xi_{3} = {-\theta \over 2\pi} + k.
\end{equation}

Thanks to the function $(\lambda,\zeta_{0},\zeta_{1}) \mapsto S_{\theta,V}(\lambda,\zeta_{0},\zeta_{1})$ defined above, we have the following result: the Fourier transform of the potential $V$ can be expressed in terms of $S_{\theta, V}$, that is the Poincar\'e--Steklov operator applied to $e_{\zeta_{\ell}}$:

\begin{lemma}\label{lem:Isozaki} 
Let $\lambda \notin {\rm sp}(A_{\theta})$ and denote by $R_{\lambda} := (A_{\theta} - \lambda)^{-1}$ the resolvent of $A_{\theta}$ acting on $L^2(Y)$. 
Let $\zeta_{0}\in {\Bbb C}^3$ satisfy \eqref{eq:Cnd-zeta} and $\zeta_{1} \in {\Bbb C}^3$ satisfy \eqref{eq:Cnd-zeta-star}, so that $e_{\zeta_{0}},e_{*\zeta_{1}} \in H^1_{\theta}(Y)$.

Then we have the following representation formula:
\begin{eqnarray}
&\displaystyle \int_{Y} \!\! V(x)\, {\rm e}^{(\zeta_{0} + \zeta_{1})\cdot x}\, dx = S_{\theta,V}(\lambda,\zeta_{0},\zeta_{1}) - \displaystyle (\zeta_{0}\cdot \zeta_{1} - \lambda) \int_{Y}\!\! {\rm e}^{(\zeta_{0} + \zeta_{1})\cdot x}dx 
\nonumber \\
& \;\qquad\qquad\qquad \displaystyle + \int_{Y}R_{\lambda}(Ve_{\zeta_{0}})(x)V(x) e_{\zeta_{1}}(x)\,dx.
\label{eq:Id-Poinca-Steklov}
\end{eqnarray}
\end{lemma}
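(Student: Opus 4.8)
The plan is to derive \eqref{eq:Id-Poinca-Steklov} by the classical Isozaki argument: multiply the equation satisfied by the solution $u$ of \eqref{eq1} with data $f := e_{\zeta_0}|_\Gamma$ by the harmonic (with respect to $\zeta\cdot\zeta = -\lambda$) exponential $e_{\zeta_1}$ and integrate by parts over $Y$. Concretely, let $u$ solve $-\Delta u + Vu - \lambda u = 0$ in $Y$ with $u = e_{\zeta_0}$ on $\Gamma$ and the quasi-periodic conditions \eqref{eq:CndQP}. Write $u = e_{\zeta_0} + w$, where $w$ has zero trace on $\Gamma$ and satisfies the quasi-periodic boundary conditions; note $e_{\zeta_0} \in H^1_\theta(Y)$ by \eqref{eq:Cnd-zeta} and $-\Delta e_{\zeta_0} = -(\zeta_0\cdot\zeta_0) e_{\zeta_0} = \lambda e_{\zeta_0}$, so $w = -R_\lambda(V e_{\zeta_0})$.

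Next I would apply Green's formula twice to $\int_Y (-\Delta u + Vu - \lambda u)\,e_{\zeta_1}\,dx = 0$. The key point is that the quasi-periodic boundary terms on $\omega\times\{0\}$ and $\omega\times\{2\pi\}$ cancel: $u$ satisfies \eqref{eq:CndQP} with factor ${\rm e}^{{\rm i}\theta}$, while $e_{\zeta_1}$ with $\zeta_1$ satisfying \eqref{eq:Cnd-zeta-star} satisfies the conjugate relation $e_{\zeta_1}(x',2\pi) = {\rm e}^{-{\rm i}\theta}e_{\zeta_1}(x',0)$ (since $\xi_3 = -\theta/(2\pi) + k$ gives ${\rm e}^{{\rm i}\xi_3\cdot 2\pi} = {\rm e}^{-{\rm i}\theta}$), and likewise for $\partial_3 e_{\zeta_1}$; hence the products match and the contributions at $x_3 = 0$ and $x_3 = 2\pi$ are equal, cancelling in the integration by parts. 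The only surviving boundary term is over $\Gamma$, which produces $\int_\Gamma (\partial u/\partial{\bf n})\,e_{\zeta_1}\,d\sigma = S_{\theta,V}(\lambda,\zeta_0,\zeta_1)$ (using that $u = e_{\zeta_0}$ on $\Gamma$ and $-\Delta e_{\zeta_1} = \lambda e_{\zeta_1}$, so the term $\int_\Gamma u\,\partial e_{\zeta_1}/\partial{\bf n}$ combines appropriately — more precisely one integrates by parts only the $\Delta u$ term, keeping $e_{\zeta_1}$ intact). This yields
\[
S_{\theta,V}(\lambda,\zeta_0,\zeta_1) = \int_Y \nabla u\cdot\nabla e_{\zeta_1}\,dx + \int_Y (V-\lambda)u\,e_{\zeta_1}\,dx.
\]
Then integrate by parts back in the gradient term using $-\Delta e_{\zeta_1} = \lambda e_{\zeta_1}$ and the boundary cancellation again, to get $S_{\theta,V} = \int_Y (-\Delta e_{\zeta_1} + (V-\lambda)e_{\zeta_1})u\,dx = \int_Y (V - \lambda + \lambda)e_{\zeta_1}\cdot\ldots$ — actually cleaner: substitute $u = e_{\zeta_0} + w$ into $\int_Y(V-\lambda)u\,e_{\zeta_1}$ and handle the $\nabla u = \nabla e_{\zeta_0} + \nabla w$ term, using that $-\Delta e_{\zeta_0} = \lambda e_{\zeta_0}$ and $\zeta_0\cdot\zeta_1$ arises from $\nabla e_{\zeta_0}\cdot\nabla e_{\zeta_1} = (\zeta_0\cdot\zeta_1)e_{\zeta_0}e_{\zeta_1} = (\zeta_0\cdot\zeta_1)e^{(\zeta_0+\zeta_1)\cdot x}$.

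Carrying this through: the term $\int_Y \nabla e_{\zeta_0}\cdot\nabla e_{\zeta_1}\,dx$ gives $(\zeta_0\cdot\zeta_1)\int_Y e^{(\zeta_0+\zeta_1)\cdot x}\,dx$; the terms involving $w$ recombine, using $-\Delta w + Vw - \lambda w = -Ve_{\zeta_0}$, into $-\int_Y V e_{\zeta_0}\,e_{\zeta_1}\,dx$ plus a term $\int_Y w\,(-\Delta e_{\zeta_1})\,dx$-type contribution that, together with the $-\lambda\int_Y w e_{\zeta_1}$ piece, vanishes or reorganizes; and the $-\lambda\int_Y e_{\zeta_0}e_{\zeta_1}$ piece combines with $(\zeta_0\cdot\zeta_1)\int_Y e^{(\zeta_0+\zeta_1)\cdot x}$ to yield the $-(\zeta_0\cdot\zeta_1 - \lambda)\int_Y e^{(\zeta_0+\zeta_1)\cdot x}\,dx$ of the statement. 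Finally the term $\int_Y R_\lambda(Ve_{\zeta_0})\,V e_{\zeta_1}\,dx = -\int_Y w\,V e_{\zeta_1}\,dx$ appears with the correct sign. Collecting all pieces and moving $\int_Y V e^{(\zeta_0+\zeta_1)\cdot x}\,dx = \int_Y V e_{\zeta_0}e_{\zeta_1}\,dx$ to the left-hand side gives exactly \eqref{eq:Id-Poinca-Steklov}.

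The main obstacle is bookkeeping the boundary terms correctly: one must justify that $u \in H^2(Y)$ (which follows from the regularity statement $D(A_\theta)\hookrightarrow H^2(Y)$ quoted before Lemma \ref{lem:Resolution}, applied to $w = -R_\lambda(Ve_{\zeta_0}) \in D(A_\theta)$, and $e_{\zeta_0}\in C^\infty$), so that $\partial u/\partial{\bf n} \in L^2(\Gamma)$ and Green's formula is legitimate, and then verify carefully that the quasi-periodic traces of $u$ (factor ${\rm e}^{{\rm i}\theta}$) pair with those of $e_{\zeta_1}$ (factor ${\rm e}^{-{\rm i}\theta}$, from condition \eqref{eq:Cnd-zeta-star}) to make every boundary integral over $\omega\times\{0,2\pi\}$ cancel. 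This conjugation of the quasi-periodicity factor is precisely why Isozaki's $e_{*\zeta_1}$ / $e_{\zeta_1}$ distinction and conditions \eqref{eq:Cnd-zeta}--\eqref{eq:Cnd-zeta-star} are set up as they are, and it is the one place where the argument genuinely uses the structure of the quasi-periodic problem rather than just formal manipulation. The rest is a routine but somewhat lengthy rearrangement of the integration-by-parts identities.
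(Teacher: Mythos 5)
Your proposal is correct and follows essentially the same route as the paper: pair the equation for $u$ (with boundary datum $e_{\zeta_{0}}$) against $e_{\zeta_{1}}$, use the conjugate quasi-periodicity factors ${\rm e}^{{\rm i}\theta}$ versus ${\rm e}^{-{\rm i}\theta}$ coming from \eqref{eq:Cnd-zeta} and \eqref{eq:Cnd-zeta-star} to cancel all boundary contributions on $\omega\times\{0,2\pi\}$, and exploit $u-e_{\zeta_{0}}=-R_{\lambda}(Ve_{\zeta_{0}})$ together with $-\Delta e_{\zeta_{\ell}}=\lambda e_{\zeta_{\ell}}$ and $\nabla e_{\zeta_{0}}\cdot\nabla e_{\zeta_{1}}=(\zeta_{0}\cdot\zeta_{1}){\rm e}^{(\zeta_{0}+\zeta_{1})\cdot x}$ to identify each term of \eqref{eq:Id-Poinca-Steklov}. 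The only cosmetic difference is that you stop at the weak form after one integration by parts and then treat the $w$-part, whereas the paper integrates by parts twice and evaluates $\int_{\Gamma}e_{\zeta_{0}}\,\partial e_{\zeta_{1}}/\partial{\bf n}\,d\sigma=(\zeta_{0}\cdot\zeta_{1}-\lambda)\int_{Y}{\rm e}^{(\zeta_{0}+\zeta_{1})\cdot x}\,dx$ separately; the two bookkeepings are equivalent.
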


\begin{proof}
This result, due to H.~Isozaki \cite{Isozaki} in another situation, can be proved exactely in the same way as in \cite{Isozaki}, but the boundary conditions involved here are slightly different and thus the test functions $e_{\zeta_{\ell}}$ have to be of a certain type, hence for the reader's convenience, and the sake of completeness, we give its proof.
Let $u \in H^1_{\theta}(Y)$ be the unique solution of the equation
\begin{equation}\label{eq:u-e-zeta}
\left\{ 
\begin{array}{rcll} 
-\Delta u + Vu - \lambda u & = & 0, & \mbox{in}\ Y ,\\ 
u(\sigma) & = & e_{\zeta_{0}}(\sigma),& \sigma\in \Gamma,\\
 u(x',2\pi) & = & {\rm e}^{{\rm i}\theta} u(x',0) , & x'\in\omega,\\
\partial_{3} u(x',2\pi) & = & {\rm e}^{{\rm i}\theta}\partial_{3} u(x',0) , & x'\in\omega.
\end{array}\right.
\end{equation}
Taking the scalar product of the first equation with the function $e_{*\zeta_{1}}\in H^1_{\theta}(Y)$, we have that
\begin{equation}\label{eq:Id-1}
-\int_{Y}\Delta u(x) \overline{e_{*\zeta_{1}}}(x)\,dx + \int_{Y}(V(x) - \lambda)u(x)\,\overline{e_{*\zeta_{1}}}(x) \,dx =0, 
\end{equation}
and since with our notations $\overline{e_{*\zeta_{1}}}(x) = e_{\zeta_{1}}(x)$, and 
$-\Delta e_{\zeta_{1}}(x) = \lambda e_{\zeta_{1}}(x)$,
after two integration by parts  we get 
\begin{eqnarray}
-\int_{Y}\!\!\Delta u(x) e_{\zeta_{1}}(x)\,dx &= \displaystyle - \int_{\partial Y}\!\!\left[{\partial u(\sigma) \over \partial{\bf n}} \, e_{\zeta_{1}}(\sigma) 
- u(\sigma)\, {\partial e_{\zeta_{1}}(\sigma) \over\partial{\bf n} }\right]\, d\sigma \nonumber\\
&\displaystyle +\lambda\int_{Y}\!\!u(x)\,e_{\zeta_{1}}(x)\, dx. \label{eq:Int-by-parts-1}
\end{eqnarray}
Now, taking into account the fact that $u$ and $e_{*\zeta_{1}}$ belong to $H^1_{\theta}(Y)$, while (note the presence of ${\rm e}^{-{\rm i}\theta}$ in front of $\partial_{3}e_{\zeta_{1}}(x',0)$)
$$\partial_{3}u(x',2\pi) - {\rm e}^{{\rm i}\theta}\partial_{3}u(x',0) = \partial_{3}e_{\zeta_{1}}(x',2\pi) - {\rm e}^{-{\rm i}\theta}\partial_{3} e_{\zeta_{1}}(x',0) = 0 \quad\mbox{for }\, x'\in \omega,$$
we conclude that
$$\int_{\partial Y \setminus \Gamma}\left[{\partial u(\sigma) \over \partial{\bf n}} \, e_{\zeta_{1}}(\sigma) 
- u(\sigma)\, {\partial e_{\zeta_{1}}(\sigma) \over\partial{\bf n} }\right]\, d\sigma =0, $$
and finally from \eqref{eq:Id-1} and \eqref{eq:Int-by-parts-1} we induce
$$\int_{Y}\!\!V(x) \,u(x)\,e_{\zeta_{1}}(x)\, dx = 
\int_{\Gamma}\left[{\partial u(\sigma) \over \partial{\bf n}} \, e_{\zeta_{1}}(\sigma) 
- u(\sigma)\, {\partial e_{\zeta_{1}}(\sigma) \over\partial{\bf n} }\right]\, d\sigma .$$
Now, since $\partial u/\partial{\bf n} = \Lambda_{\theta,V-\lambda}(e_{\zeta_{0}})$ and $u(\sigma)= e_{\zeta_{0}}(\sigma)$ on $\Gamma$, the above can be written as
\begin{equation}
\int_{Y}\!\!V \,u\,e_{\zeta_{1}}\, dx  = 
\int_{\Gamma}\Lambda_{\theta,V-\lambda}(e_{\zeta_{0}})(\sigma) \, e_{\zeta_{1}}(\sigma)\, d\sigma 
- \int_{\Gamma} e_{\zeta_{0}}(\sigma)\, {\partial e_{\zeta_{1}}(\sigma) \over\partial{\bf n} }\, d\sigma. \label{eq:Int-by-parts-2}
\end{equation}
To inspect further the left hand side of \eqref{eq:Int-by-parts-2}, we write $u = \psi + e_{\zeta_{0}}$ for some $\psi \in H^1_{0,\theta}(Y)$: in fact $\psi \in D(A_{\theta})$ and satisfies the equation
$$-\Delta \psi + V\psi -\lambda \psi = - Ve_{\zeta_{0}},$$
which means that 
\begin{equation}\label{eq:Diff-u-e-zeta}
u - e_{\zeta_{0}} = \psi = -R_{\lambda}(Ve_{\zeta_{0}}).
\end{equation}
 Therefore we have 
\begin{equation*}
\int_{Y}\!V \,u\,e_{\zeta_{1}}\, dx = \int_{Y}\! V\,e_{\zeta_{0}}\, e_{\zeta_{1}}\, dx - \int_{Y}\! Ve_{\zeta_{1}}R_{\lambda}(Ve_{\zeta_{0}})\,dx,
\end{equation*}
that is 
\begin{equation}\label{eq:Rel-V-u}
\int_{Y}\!V \,u\,e_{\zeta_{1}}\, dx = \int_{Y}\! V(x) \, {\rm e}^{(\zeta_{0}+\zeta_{1})\cdot x}dx - \int_{Y}\! V(x)e_{\zeta_{1}}(x)R_{\lambda}(Ve_{\zeta_{0}})(x)\,dx.
\end{equation}

Regarding the second term of the right hand side of the equality \eqref{eq:Int-by-parts-2}, upon observing that $e_{\zeta_{1}}$ satisfies the equation
$-\Delta e_{\zeta_{1}} = \lambda e_{\zeta_{1}}$, multiplying this equality by $e_{\zeta_{0}}$ and integrating by parts, we get
$$\int_{\partial Y}e_{\zeta_{0}}(\sigma)\, {\partial e_{\zeta_{1}}(\sigma) \over \partial{\bf n}}\, d\sigma = (\zeta_{0}\cdot \zeta_{1} - \lambda) \int_{Y}{\rm e}^{(\zeta_{0} + \zeta_{1})\cdot x}\, dx.$$
But since $\zeta_{0}$ satisfies \eqref{eq:Cnd-zeta} and $\zeta_{1}$ satisfies \eqref{eq:Cnd-zeta-star}, we have that
$$\int_{\partial Y \setminus \Gamma}e_{\zeta_{0}}(\sigma)\, {\partial e_{\zeta_{1}}(\sigma) \over \partial{\bf n}}\, d\sigma = 0,$$
and finally we obtain
$$\int_{\Gamma}e_{\zeta_{0}}(\sigma)\, {\partial e_{\zeta_{1}}(\sigma) \over \partial{\bf n}}\, d\sigma = (\zeta_{0}\cdot \zeta_{1} - \lambda) \int_{Y}{\rm e}^{(\zeta_{0} + \zeta_{1})\cdot x}\, dx.$$
Plugging this, together with \eqref{eq:Rel-V-u}, into \eqref{eq:Int-by-parts-2} we obtain the identity claimed in the theorem.
\end{proof}

From \eqref{eq:Id-Poinca-Steklov} we see that if for any given $\xi\in {\Bbb R}^3$ we can find $\zeta_{0},\zeta_{1}$ such that on the one hand $\zeta_{0} + \zeta_{1} \sim -{\rm i}\xi$, and on the other hand they are such that the term
$$\int_{Y}R_{\lambda}(Ve_{\zeta_{0}})(x)\,V(x)e_{\zeta_{1}}(x)\,dx$$
is small, then the Fourier transform of $1_{Y}V$ is known in terms of $S_{\theta,V}(\lambda,\zeta_{0},\zeta_{1})$, up to the above error term. To this end we establish the following couple of lemmas, where we construct appropriate directions $\zeta_{0},\zeta_{1}$. The first lemma concerns the case of integer Fourier frequency not less than 1 (in the direction $x_{3}$) :

\begin{lemma}\label{lem:Larger-than-zero}
Let $\xi' \in {\Bbb R}^2$, with $|\xi'| \neq 0$, and $j \in {\Bbb Z}$ with $j \geq 1$. We fix $\eta' \in {\Bbb R}^2$ such that $|\eta'| = 1$ and $\xi'\cdot \eta' = 0$. For $s \in (0,1)$ and $(1-s)$ sufficiently small, we define for $t >0$ such that $st > |\xi'|/2$:
\begin{eqnarray*}
& \zeta'_{0} := \displaystyle {\rm i}(st + {\rm i})\left({-\xi' \over 2st} + \left(1 - {|\xi'|^2 \over 4s^2t^2}\right)^{1/2} \eta' \right) \\
& \zeta'_{1} := \displaystyle {\rm i}(t + {\rm i}s)\left({-\xi' \over 2t} - \left(1 - {|\xi'|^2 \over 4t^2}\right)^{1/2} \eta' \right) ,
\end{eqnarray*}
and set
\begin{equation}
\zeta_{0} := \left(\zeta'_{0},{\rm i}\left(j + {\theta \over 2\pi}\right)\right)\in {\Bbb C}^3,\qquad
\zeta_{1} := \left(\zeta'_{1}, {-{\rm i}\theta \over 2\pi}\right) \in {\Bbb C}^3.
\end{equation}
Upon choosing $t > 0$ such that
$$t^2 = {1 \over 1 - s^2}\, j\left(j + {\theta \over \pi}\right) - 1,$$
we have $\lambda := - \zeta_{0}\cdot\zeta_{0} = - \zeta_{1}\cdot\zeta_{1}$. Moreover as $s \to 1$ we have the following
\begin{eqnarray}
& t \to + \infty, \qquad {\rm Im}(\lambda) \to + \infty, \qquad \zeta_{0} + \zeta_{1} \to -{\rm i}(\xi',-j),\\
& \displaystyle  \zeta_{0} \cdot\zeta_{1} - \lambda \to -{1 \over 2} \left(|\xi'|^2 + j^2\right)
\end{eqnarray}
We have also $\|e_{\zeta_{0}}\| + \|e_{\zeta_{1}}\| \leq c$  for some constant $c > 0$ independent of $s$.
\end{lemma}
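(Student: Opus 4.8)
The plan is to reduce the statement to a sequence of elementary, if somewhat delicate, asymptotic computations, resting on two facts: each of $\zeta'_0,\zeta'_1$ is a complex scalar times a \emph{real unit vector}, and the prescribed value of $t^2$ is exactly the one forced by the identity $\zeta_0\cdot\zeta_0=\zeta_1\cdot\zeta_1$. First I would write $\zeta'_0={\rm i}(st+{\rm i})\,w_0$ and $\zeta'_1={\rm i}(t+{\rm i}s)\,w_1$, where $w_0$ and $w_1$ denote the real vectors appearing in brackets in the definitions of $\zeta'_0$ and $\zeta'_1$; since $\xi'\cdot\eta'=0$ and $|\eta'|=1$ one checks that $w_0\cdot w_0=w_1\cdot w_1=1$. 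Note also that, because the third components of $\zeta_0$ and $\zeta_1$ are ${\rm i}(j+\theta/2\pi)$ and $-{\rm i}\theta/2\pi$ with vanishing real part, the vectors $\zeta_0$ and $\zeta_1$ satisfy \eqref{eq:Cnd-zeta} and \eqref{eq:Cnd-zeta-star} respectively, so that $e_{\zeta_0},e_{*\zeta_1}\in H^1_\theta(Y)$. Using the identities for $w_\ell\cdot w_\ell$ and the purely imaginary third components, one gets at once $\zeta_0\cdot\zeta_0=-(st+{\rm i})^2-(j+\theta/2\pi)^2$ and $\zeta_1\cdot\zeta_1=-(t+{\rm i}s)^2-\theta^2/4\pi^2$. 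Equating $-\zeta_0\cdot\zeta_0$ with $-\zeta_1\cdot\zeta_1$, the imaginary parts (both equal to $2st$) agree automatically, and equality of the real parts simplifies exactly to $(1-s^2)t^2=j(j+\theta/\pi)-(1-s^2)$, i.e. to the stated value of $t^2$. This simultaneously proves $\lambda:=-\zeta_0\cdot\zeta_0=-\zeta_1\cdot\zeta_1$ and gives $\lambda=s^2t^2-1+2{\rm i}st+(j+\theta/2\pi)^2$, whose imaginary part $2st$ is nonzero, so that $\lambda\notin{\Bbb R}\supset{\rm sp}(A_\theta)$.

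Next I would treat the limits as $s\to1$. Since $j\geq1$ forces $j(j+\theta/\pi)\geq1$, the relation $t^2=j(j+\theta/\pi)/(1-s^2)-1$ gives $t\to+\infty$, hence ${\rm Im}(\lambda)=2st\to+\infty$. For $\zeta_0+\zeta_1$, the third component is identically ${\rm i}j$; decomposing the first two components along $\xi'$ and $\eta'$, the coefficient of $\xi'$ equals $(2st)^{-1}+s(2t)^{-1}-{\rm i}\to-{\rm i}$, while the coefficient of $\eta'$ is ${\rm i}t(sa-b)+(sb-a)$, where $a:=(1-|\xi'|^2/4s^2t^2)^{1/2}$ and $b:=(1-|\xi'|^2/4t^2)^{1/2}$. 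Rationalizing, $s^2a^2-b^2=s^2-1$, so $sa-b=(s^2-1)/(sa+b)$; since $t(1-s^2)=t\,j(j+\theta/\pi)/(t^2+1)\to0$, $sa+b\to2$ and $sb-a\to0$, this $\eta'$-coefficient tends to $0$. Hence $\zeta_0+\zeta_1\to(-{\rm i}\xi',{\rm i}j)=-{\rm i}(\xi',-j)$.

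For the quantity $\zeta_0\cdot\zeta_1-\lambda$, I would compute $\zeta'_0\cdot\zeta'_1=-(st+{\rm i})(t+{\rm i}s)\,(w_0\cdot w_1)$ with $w_0\cdot w_1=|\xi'|^2/4st^2-ab$, add the third-component contribution $(j+\theta/2\pi)\,\theta/2\pi$, subtract $\lambda$, and separate real and imaginary parts. The imaginary part is $t[(s^2+1)ab-2s]$ plus terms tending to $0$; to control it I would rationalize via $(s^2+1)^2a^2b^2-4s^2=(s^2+1)^2(a^2b^2-1)+(s^2-1)^2$, using $a^2b^2-1=-(|\xi'|^2/4t^2)(1+s^{-2})+O(t^{-4})$ and $(1-s^2)^2=O(t^{-4})$, which gives $t^2[(s^2+1)ab-2s]\to-|\xi'|^2/2$ and hence vanishing imaginary part. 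For the real part I would group $s(t^2-1)ab-s^2t^2=t^2s(ab-s)-sab$, write $ab-s=s(1-s^2)/(s^2+1)+[(s^2+1)ab-2s]/(s^2+1)$, and use $t^2(1-s^2)\to j(j+\theta/\pi)$ to obtain $t^2s(ab-s)\to\tfrac12 j(j+\theta/\pi)-|\xi'|^2/4$; combining this with $-(t^2-1)|\xi'|^2/4t^2\to-|\xi'|^2/4$, the constant $-j^2-j\theta/2\pi+1$ produced by $\lambda$ and the third components, and $-sab\to-1$, everything collapses to the limit $-\tfrac12(|\xi'|^2+j^2)$.

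Finally, for the norm bound: $|e_{\zeta_\ell}(x)|^2=\exp(2\,{\rm Re}(\zeta_\ell)\cdot x)$, and ${\rm Re}(\zeta_0)=(-w_0,0)$, ${\rm Re}(\zeta_1)=(-sw_1,0)$ have Euclidean norms $|w_0|=1$ and $s|w_1|=s\leq1$; since $Y$ is bounded, this yields $\|e_{\zeta_0}\|^2+\|e_{\zeta_1}\|^2\leq 2|Y|\sup_{x\in Y}{\rm e}^{2|x|}$, a constant depending only on $\omega$. I expect the main obstacle to be the two cancellations of divergent quantities — the $\eta'$-components of $\zeta_0+\zeta_1$, which individually grow like $t$, and the several $O(t^2)$ and $O(t)$ contributions to $\zeta_0\cdot\zeta_1-\lambda$ — both of which succeed only because of the precise way $t$ is tied to $s$; the rationalization identities $sa-b=(s^2-1)/(sa+b)$ and $(s^2+1)ab-2s=[(s^2+1)^2a^2b^2-4s^2]/[(s^2+1)ab+2s]$ are the devices that make these cancellations transparent and quantitative.
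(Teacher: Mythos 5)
Your proof is correct, and all the computations I checked (the identity $\zeta_0\cdot\zeta_0=\zeta_1\cdot\zeta_1$ reducing to the stated choice of $t^2$, the cancellation in the $\eta'$-component of $\zeta_0+\zeta_1$ via $sa-b=(s^2-1)/(sa+b)$ and $t(1-s^2)\to 0$, and the real/imaginary parts of $\zeta_0\cdot\zeta_1-\lambda$) come out right. But you should know that the paper disposes of the hardest-looking limit in one line by polarization: since $\zeta_0\cdot\zeta_0=\zeta_1\cdot\zeta_1=-\lambda$, one has $\zeta_0\cdot\zeta_1-\lambda=\tfrac12(\zeta_0+\zeta_1)\cdot(\zeta_0+\zeta_1)$, and the bilinear form $\zeta\mapsto\zeta\cdot\zeta$ is a polynomial, hence continuous; so once $\zeta_0+\zeta_1\to-{\rm i}(\xi',-j)$ is known, the limit $-\tfrac12(|\xi'|^2+j^2)$ is immediate, with no need for the rationalization of $(s^2+1)ab-2s$ or the bookkeeping of the $O(t^2)$ and $O(t)$ cancellations that occupy your third paragraph. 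Your brute-force expansion is a valid substitute, but it is where all the risk of error is concentrated, and the polarization identity makes that entire computation unnecessary. On the other hand, your argument is more complete than the paper's on one point: the paper merely asserts $\zeta_0+\zeta_1\to-{\rm i}(\xi',-j)$ without justifying the cancellation of the individually divergent $\eta'$-components (each of size $\sim t$), whereas you supply the needed identity $sa-b=(s^2-1)/(sa+b)$ together with $t(1-s^2)\to 0$; that detail is genuinely required and is a worthwhile addition. The remaining pieces (the algebra giving $t^2$, the observation that ${\rm Re}(\zeta_0)=(-w_0,0)$ and ${\rm Re}(\zeta_1)=(-sw_1,0)$ have norm at most $1$ so that $\|e_{\zeta_\ell}\|_{L^2(Y)}$ is bounded independently of $s$) coincide with the paper's treatment.
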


\begin{proof}
The fact that with the above choice of $t$ one has $\zeta_{0}\cdot\zeta_{0} = \zeta_{1}\cdot \zeta_{1}$ is just a matter of elementary algebra since
$$-\zeta_{0}\cdot\zeta_{0} = (st + {\rm i})^2 + \left(j + {\theta \over 2\pi}\right)^2,\qquad -\zeta_{1}\cdot\zeta_{1} = (t + {\rm i}s)^2 + {\theta^2 \over 4\pi^2},$$
and one verifies that $\zeta_{0}\cdot\zeta_{0} = \zeta_{1}\cdot\zeta_{1}$. Calling this commun value $-\lambda := \zeta_{1}\cdot\zeta_{1}$, we have that ${\rm Re}(\lambda) = t^2 - s^2 + \theta^2/4\pi^2$, while ${\rm Im}(\lambda) = 2st$. Note that
$$\zeta_{0}\cdot\zeta_{1} - \lambda = {1 \over 2}(\zeta_{0} + \zeta_{1}) \cdot (\zeta_{0} + \zeta_{1}) \to -{1 \over 2} \left(|\xi'|^2 + j^2\right),$$
since it is clear that when $s \to 1$ we have $t \to +\infty$, and also as a consequence ${\rm Im}(\lambda) = 2st \to + \infty$. 
\end{proof}

The second lemma concerns the case of the integer Fourier frequency (in the direction $x_{3}$) not larger than $-1$:

\begin{lemma}\label{lem:Less-than-zero}
Let $\xi' \in {\Bbb R}^2$, with $|\xi'| \neq 0$, and $j \in {\Bbb Z}$ with $j \leq - 1$. We fix $\eta' \in {\Bbb R}^2$ such that $|\eta'| = 1$ and $\xi'\cdot \eta' = 0$. For $s \in (0,1)$ and $(1-s)$ sufficiently small, we define for $t >0$ such that $st > |\xi'|/2$:
\begin{eqnarray*}
& \zeta'_{0} := \displaystyle {\rm i}(st + {\rm i})\left({-\xi' \over 2st} + \left(1 - {|\xi'|^2 \over 4s^2t^2}\right)^{1/2} \eta' \right) \\
& \zeta'_{1} := \displaystyle {\rm i}(t + {\rm i}s)\left({-\xi' \over 2t} - \left(1 - {|\xi'|^2 \over 4t^2}\right)^{1/2} \eta' \right) ,
\end{eqnarray*}
and set
\begin{equation}
\zeta_{0} := \left(\zeta'_{0},{\rm i}\left(j - 1 + {\theta \over 2\pi}\right)\right)\in {\Bbb C}^3,\qquad
\zeta_{1} := \left(\zeta'_{1}, {\rm i}\left( 1 - {\theta \over 2\pi}\right)\right) \in {\Bbb C}^3.
\end{equation}
Upon choosing $t > 0$ such that
$$t^2 = {1 \over 1 - s^2}\, j\left(j - 2 + {\theta \over \pi}\right) - 1,$$
we have $\lambda := - \zeta_{0}\cdot\zeta_{0} = - \zeta_{1}\cdot\zeta_{1}$. Moreover as $s \to 1$ we have the following
\begin{eqnarray}
& t \to + \infty,\qquad {\rm Im}(\lambda) \to + \infty, \qquad \zeta_{0} + \zeta_{1} \to -{\rm i}(\xi',-j),\\
& \displaystyle \zeta_{0} \cdot\zeta_{1} - \lambda \to -{1 \over 2} \left(|\xi'|^2 + j^2\right).
\end{eqnarray}
We have also $\|e_{\zeta_{0}}\| + \|e_{\zeta_{1}}\| \leq c$  for some constant $c > 0$ independent of $s$.
\end{lemma}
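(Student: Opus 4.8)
Below is the plan I would follow; since Lemma~\ref{lem:Less-than-zero} is the verbatim counterpart of Lemma~\ref{lem:Larger-than-zero}, the proof is essentially the same, and the only differences are those forced by the shift of the $x_{3}$-frequencies (needed here to keep $t^{2}$ positive).

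\textbf{Step 1: the dot products and the choice of $t$.} First I would observe that, since $\xi'\cdot\eta'=0$ and $|\eta'|=1$, the two \emph{real} vectors
$$w_{0} := \frac{-\xi'}{2st} + \left(1 - \frac{|\xi'|^2}{4s^2t^2}\right)^{1/2}\eta',\qquad
w_{1} := \frac{-\xi'}{2t} - \left(1 - \frac{|\xi'|^2}{4t^2}\right)^{1/2}\eta'$$
satisfy $w_{0}\cdot w_{0}=w_{1}\cdot w_{1}=1$; hence, writing $\zeta'_{0}=(-1+{\rm i}st)w_{0}$ and $\zeta'_{1}=(-s+{\rm i}t)w_{1}$, we get $\zeta'_{0}\cdot\zeta'_{0}=-(st+{\rm i})^2$ and $\zeta'_{1}\cdot\zeta'_{1}=-(t+{\rm i}s)^2$, so that
$$-\zeta_{0}\cdot\zeta_{0} = (st+{\rm i})^2 + \left(j-1+\frac{\theta}{2\pi}\right)^2,\qquad
-\zeta_{1}\cdot\zeta_{1} = (t+{\rm i}s)^2 + \left(1-\frac{\theta}{2\pi}\right)^2.$$
Equating these, the imaginary parts match automatically (both equal $2st$), and the real parts, using $(j-1+\frac{\theta}{2\pi})^2-(1-\frac{\theta}{2\pi})^2=j(j-2+\frac{\theta}{\pi})$, reduce \emph{exactly} to the prescribed relation $t^2=\frac{1}{1-s^2}j(j-2+\frac{\theta}{\pi})-1$. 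Calling $-\lambda:=\zeta_{1}\cdot\zeta_{1}$, we read off ${\rm Re}(\lambda)=t^2-s^2+(1-\frac{\theta}{2\pi})^2$ and ${\rm Im}(\lambda)=2st$. Since $j\leq-1$ and $\theta\in[0,2\pi)$ force $j-2+\frac{\theta}{\pi}<0$, the number $j(j-2+\frac{\theta}{\pi})$ is a fixed positive quantity (this is precisely why the shift is introduced); thus for $s$ close enough to $1$ — so that also $st>|\xi'|/2$ and $4t^2>|\xi'|^2$ — one has $t^2>0$, and as $s\to1^-$ one gets $t\to+\infty$ and ${\rm Im}(\lambda)=2st\to+\infty$.

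\textbf{Step 2: the limits of $\zeta_{0}+\zeta_{1}$ and $\zeta_{0}\cdot\zeta_{1}-\lambda$.} The $x_{3}$-component of $\zeta_{0}+\zeta_{1}$ is ${\rm i}(j-1+\frac{\theta}{2\pi})+{\rm i}(1-\frac{\theta}{2\pi})={\rm i}j$, which is exactly the third component of $-{\rm i}(\xi',-j)$. For the $\rd$-part, expanding $\zeta'_{0}=-w_{0}+{\rm i}st\,w_{0}$ and $\zeta'_{1}=-sw_{1}+{\rm i}t\,w_{1}$, the bounded contributions tend to $\bigl(-\eta'-\tfrac{{\rm i}\xi'}{2}\bigr)+\bigl(\eta'-\tfrac{{\rm i}\xi'}{2}\bigr)=-{\rm i}\xi'$, while the only a priori divergent contribution is
$${\rm i}t\,\eta'\left[\left(s^2-\frac{|\xi'|^2}{4t^2}\right)^{1/2}-\left(1-\frac{|\xi'|^2}{4t^2}\right)^{1/2}\right]
={\rm i}\,\eta'\ \frac{t(s^2-1)}{\left(s^2-\frac{|\xi'|^2}{4t^2}\right)^{1/2}+\left(1-\frac{|\xi'|^2}{4t^2}\right)^{1/2}}.$$
Here is the only point that requires care: from the formula for $t^2$ one has $t^2(1-s^2)=j(j-2+\frac{\theta}{\pi})-(1-s^2)$, which stays bounded, so $t(1-s^2)=t^{-1}\cdot t^2(1-s^2)\to0$ while the denominator tends to $2$; hence this term vanishes as $s\to1$. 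Therefore $\zeta_{0}+\zeta_{1}\to-{\rm i}(\xi',-j)$. Then, since $\zeta_{0}\cdot\zeta_{0}=\zeta_{1}\cdot\zeta_{1}=-\lambda$, one has the algebraic identity $\zeta_{0}\cdot\zeta_{1}-\lambda=\tfrac12(\zeta_{0}+\zeta_{1})\cdot(\zeta_{0}+\zeta_{1})$ (exactly as in the proof of Lemma~\ref{lem:Larger-than-zero}), and continuity of $\zeta\mapsto\zeta\cdot\zeta$ together with $(-{\rm i}(\xi',-j))\cdot(-{\rm i}(\xi',-j))=-(|\xi'|^2+j^2)$ gives $\zeta_{0}\cdot\zeta_{1}-\lambda\to-\tfrac12(|\xi'|^2+j^2)$.

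\textbf{Step 3: the uniform bound and admissibility.} Finally, since the $x_{3}$-components of $\zeta_{0}$ and $\zeta_{1}$ are purely imaginary, one has ${\rm Re}\,\zeta_{0}=(-w_{0},0)$ and ${\rm Re}\,\zeta_{1}=(-s\,w_{1},0)$, so that ${\rm Re}(\zeta_{0}\cdot x)=-w_{0}\cdot x'$ and ${\rm Re}(\zeta_{1}\cdot x)=-s\,w_{1}\cdot x'$, whose moduli are each at most $C_{\omega}:=\sup_{y'\in\omega}|y'|<\infty$, uniformly in $s$ (recall $|w_{0}|=|w_{1}|=1$ and $s\le1$); consequently $\|e_{\zeta_{\ell}}\|^2=\int_{Y}{\rm e}^{2{\rm Re}(\zeta_{\ell}\cdot x)}\,dx\le|Y|\,{\rm e}^{2C_{\omega}}$ for $\ell=0,1$, which is the asserted bound. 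Along the way one checks, by inspecting real and imaginary parts, that $\zeta_{0}$ obeys \eqref{eq:Cnd-zeta} with $k=j-1\in\Z$ and $\zeta_{1}$ obeys \eqref{eq:Cnd-zeta-star} with $k=1\in\Z$, so that $e_{\zeta_{0}},e_{*\zeta_{1}}\in H^1_{\theta}(Y)$ and this pair is admissible in Lemma~\ref{lem:Isozaki}. I expect the whole argument to be routine algebra and elementary limits; the single genuinely non-mechanical step — identical to the corresponding one in Lemma~\ref{lem:Larger-than-zero} — is the cancellation of the $\eta'$-directed divergent terms in $\zeta'_{0}+\zeta'_{1}$, which hinges precisely on the prescribed coupling between $t$ and $s$ through the choice of $t^2$.
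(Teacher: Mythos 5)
Your proof is correct and follows the same route as the paper, which in fact omits the proof of this lemma entirely (it is the verbatim analogue of the proof of Lemma \ref{lem:Larger-than-zero}: check $\zeta_{0}\cdot\zeta_{0}=\zeta_{1}\cdot\zeta_{1}$ by elementary algebra, read off ${\rm Im}(\lambda)=2st$, and use $\zeta_{0}\cdot\zeta_{1}-\lambda=\tfrac12(\zeta_{0}+\zeta_{1})\cdot(\zeta_{0}+\zeta_{1})$). You actually supply more detail than the paper does anywhere, in particular the cancellation of the divergent $\eta'$-directed terms via $t(1-s^2)\to 0$ and the explicit verification of the admissibility conditions \eqref{eq:Cnd-zeta} and \eqref{eq:Cnd-zeta-star}; all of these checks are accurate.
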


Finally we state how to choose the vectors $\zeta_{0}, \zeta_{1} \in {\Bbb C}^3$ when the Fourier frequency $j$ in the direction $x_{3}$ is equal to zero:

\begin{lemma}\label{lem:Equal-to-zero}
Let $\xi' \in {\Bbb R}^2$, with $|\xi'| \neq 0$, and $j = 0$. We fix $\eta' \in {\Bbb R}^2$ such that $|\eta'| = 1$ and $\xi'\cdot \eta' = 0$. For $t >0$ such that $t > |\xi'|/2$ we set:
\begin{eqnarray*}
& \zeta'_{0} := \displaystyle {\rm i}(t + {\rm i})\left({-\xi' \over 2t} + \left(1 - {|\xi'|^2 \over 4t^2}\right)^{1/2} \eta' \right) \\
& \zeta'_{1} := \displaystyle {\rm i}(t + {\rm i})\left({-\xi' \over 2t} - \left(1 - {|\xi'|^2 \over 4t^2}\right)^{1/2} \eta' \right) ,
\end{eqnarray*}
and set
\begin{equation}
\zeta_{0} := \left(\zeta'_{0},{\rm i}\,{\theta \over 2\pi}\right)\in {\Bbb C}^3,\qquad
\zeta_{1} := \left(\zeta'_{1}, -{\rm i}\, {\theta \over 2\pi}\right) \in {\Bbb C}^3.
\end{equation}
Then we have $\lambda := - \zeta_{0}\cdot\zeta_{0} = - \zeta_{1}\cdot\zeta_{1} = (t + {\rm i})^2 - (\theta^2/4\pi^2)$. Moreover as $t \to \infty$ we have the following
\begin{eqnarray}
&{\rm Im}(\lambda) \to + \infty, \qquad \zeta_{0} + \zeta_{1} \to -{\rm i}(\xi',0),\\
& \displaystyle \zeta_{0} \cdot\zeta_{1} - \lambda \to -{1 \over 2} \,|\xi'|^2 .
\end{eqnarray}
We have also $\|e_{\zeta_{0}}\| + \|e_{\zeta_{1}}\| \leq c$  for some constant $c > 0$ independent of $t$.
\end{lemma}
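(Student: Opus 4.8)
The plan is to follow the scheme of the proof of Lemma~\ref{lem:Larger-than-zero} essentially verbatim, the present case $j = 0$ being in fact simpler, since here no auxiliary parameter $s$ is required: the third components of $\zeta_0$ and $\zeta_1$ already contribute the same square $({\rm i}\theta/(2\pi))^2 = (-{\rm i}\theta/(2\pi))^2 = -\theta^2/(4\pi^2)$. First I would abbreviate $a := -\xi'/(2t)$ and $b := (1 - |\xi'|^2/(4t^2))^{1/2}$, so that $\zeta'_0 = {\rm i}(t+{\rm i})(a+b\eta')$ and $\zeta'_1 = {\rm i}(t+{\rm i})(a-b\eta')$, and record the single identity on which everything rests:
$$(a\pm b\eta')\cdot(a\pm b\eta') = |a|^2 + b^2|\eta'|^2 = \frac{|\xi'|^2}{4t^2} + \left(1 - \frac{|\xi'|^2}{4t^2}\right) = 1,$$
which uses $\xi'\cdot\eta' = 0$, $|\eta'| = 1$ and the definition of $b$. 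This gives $\zeta'_0\cdot\zeta'_0 = \zeta'_1\cdot\zeta'_1 = ({\rm i}(t+{\rm i}))^2 = -(t+{\rm i})^2$, and adding the (common) square of the third component yields $\zeta_0\cdot\zeta_0 = \zeta_1\cdot\zeta_1 = -(t+{\rm i})^2 - \theta^2/(4\pi^2)$, hence the asserted value of $\lambda := -\zeta_0\cdot\zeta_0 = -\zeta_1\cdot\zeta_1$; in particular ${\rm Im}(\lambda) = 2t \to +\infty$ as $t \to \infty$.

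Next I would read off the two limits. Summing the first two components, $\zeta'_0 + \zeta'_1 = 2{\rm i}(t+{\rm i})a = -{\rm i}(t+{\rm i})\xi'/t = -{\rm i}\xi'(1 + {\rm i}/t)$, which tends to $-{\rm i}\xi'$ as $t\to\infty$, while the third components cancel, so $\zeta_0 + \zeta_1 \to -{\rm i}(\xi',0)$. For the cross term I would use the polarization identity together with $\zeta_0\cdot\zeta_0 = \zeta_1\cdot\zeta_1 = -\lambda$, which collapses $\zeta_0\cdot\zeta_1 - \lambda$ to $\frac{1}{2}(\zeta_0+\zeta_1)\cdot(\zeta_0+\zeta_1)$; passing to the limit gives $\frac{1}{2}(-{\rm i}\xi',0)\cdot(-{\rm i}\xi',0) = -\frac{1}{2}|\xi'|^2$.

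Finally, for the bound $\|e_{\zeta_0}\| + \|e_{\zeta_1}\| \leq c$, I would write $\zeta_\ell = {\rm i}\xi^{(\ell)} + \eta^{(\ell)}$ with $\xi^{(\ell)},\eta^{(\ell)} \in \R^3$, so that $|e_{\zeta_\ell}(x)| = \exp(\eta^{(\ell)}\cdot x)$. Since ${\rm i}(t+{\rm i}) = -1 + {\rm i}t$, the real part of $\zeta'_\ell$ is $-(a\pm b\eta')$, while the third component of $\zeta_\ell$ is purely imaginary; hence $\eta^{(\ell)} = (-(a\pm b\eta'),0)$, which by the identity above has Euclidean norm $1$ \emph{independently of} $t$. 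With $R := \sup_{x\in Y}|x|$ this gives $\|e_{\zeta_\ell}\|_{L^2(Y)}^2 \leq |Y|\, e^{2R}$, so the claim holds with $c := 2 |Y|^{1/2} e^R$. The same decomposition also shows $\eta^{(0)}_3 = \eta^{(1)}_3 = 0$ and $\xi^{(0)}_3 = \theta/(2\pi)$, $\xi^{(1)}_3 = -\theta/(2\pi)$, i.e. \eqref{eq:Cnd-zeta} and \eqref{eq:Cnd-zeta-star} hold with $k = 0$, so that $e_{\zeta_0}, e_{*\zeta_1} \in H^1_\theta(Y)$, as needed for the application of Lemma~\ref{lem:Isozaki}. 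I do not anticipate a genuine obstacle here: the argument is elementary algebra paralleling Lemmas~\ref{lem:Larger-than-zero} and~\ref{lem:Less-than-zero}, and the only points requiring some care are the identity $(a\pm b\eta')\cdot(a\pm b\eta') = 1$ and keeping real and imaginary parts straight through the factor ${\rm i}(t+{\rm i})$.
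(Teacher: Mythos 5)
Your proof is correct and is exactly the elementary verification the paper has in mind (the paper omits the proof, remarking only that it is straightforward and parallels Lemmas \ref{lem:Larger-than-zero} and \ref{lem:Less-than-zero}); the identity $(a\pm b\eta')\cdot(a\pm b\eta')=1$, the polarization step for $\zeta_0\cdot\zeta_1-\lambda$, and the bound on $\|e_{\zeta_\ell}\|$ via the unit real part are all as intended. One small point: your own (correct) computation gives $\lambda=-\zeta_0\cdot\zeta_0=(t+{\rm i})^2+\theta^2/(4\pi^2)$, whereas the lemma as printed asserts $(t+{\rm i})^2-\theta^2/(4\pi^2)$; the sign in the statement is a typo (compare the analogous formula $-\zeta_1\cdot\zeta_1=(t+{\rm i}s)^2+\theta^2/(4\pi^2)$ in the proof of Lemma \ref{lem:Larger-than-zero}), so you should not claim your result "is the asserted value" but rather note the discrepancy --- it is harmless, since only ${\rm Im}(\lambda)=2t\to+\infty$ and the two limits are used downstream.
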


The proof of this lemma is straightforward and can be omitted.

\medskip

Since with the above choices of $\zeta_{0},\zeta_{1}$, saying that $s \to 1$, or $t\to \infty$, is equivalent to $|\lambda| \to +\infty$, we shall write only the latter. Now it is easy to obtain the following representation formula:

\begin{theorem}\label{thm:Isozaki}
Let $j \in {\Bbb Z}$ and for any $\xi'\in {\Bbb R}^2$, with $|\xi'|\neq 0$, set $\xi := (\xi',j) \in {\Bbb R}^3$. According to whether $j \geq 1$ or $j \leq -1$ or $j = 0$, let $\lambda,\zeta_{0},\zeta_{1}$ be given by either Lemma \ref{lem:Larger-than-zero}, or Lemma \ref{lem:Less-than-zero}, or Lemma \ref{lem:Equal-to-zero}. Then, as $|\lambda| \to +\infty$ we have
\begin{equation}\label{eq:Fourier-of-V}
\int_{Y}\! V(x)\, {\rm e}^{-{\rm i}\,\xi\cdot x}\, dx = 
\lim_{|\lambda| \to +\infty}\! S_{\theta,V}(\lambda,\zeta_{0},\zeta_{1}) + 
{|\xi|^2 \over 2} \int_{Y}\! {\rm e}^{-{\rm i}\,\xi\cdot x}\, dx.
\end{equation}
\end{theorem}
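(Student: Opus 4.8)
The plan is to pass to the limit $|\lambda|\to+\infty$ in the representation formula \eqref{eq:Id-Poinca-Steklov} of Lemma \ref{lem:Isozaki}, using the specific directions $\zeta_0,\zeta_1$ furnished by Lemmas \ref{lem:Larger-than-zero}, \ref{lem:Less-than-zero}, \ref{lem:Equal-to-zero} according to the sign of $j$. First I would check that those vectors are admissible test directions: by construction one has $\zeta_0\cdot\zeta_0=\zeta_1\cdot\zeta_1=-\lambda$ with $\lambda\notin{\rm sp}(A_\theta)$ for $|\lambda|$ large (since ${\rm Im}(\lambda)\to+\infty$ while the spectrum is real), the third component of $\zeta_0$ is ${\rm i}(j+\theta/2\pi)$ or ${\rm i}(j-1+\theta/2\pi)$ or ${\rm i}\theta/2\pi$ so $\zeta_0$ satisfies \eqref{eq:Cnd-zeta} and hence $e_{\zeta_0}\in H^1_\theta(Y)$, and similarly $\zeta_1$ satisfies \eqref{eq:Cnd-zeta-star} so $e_{*\zeta_1}\in H^1_\theta(Y)$. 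Thus Lemma \ref{lem:Isozaki} applies and \eqref{eq:Id-Poinca-Steklov} holds for each large $|\lambda|$.

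Next I would take limits term by term in \eqref{eq:Id-Poinca-Steklov}. For the left-hand side, the lemmas give $\zeta_0+\zeta_1\to-{\rm i}(\xi',-j)$; I need to be careful with the sign convention, matching $e^{(\zeta_0+\zeta_1)\cdot x}$ against $e^{-{\rm i}\xi\cdot x}$ with $\xi=(\xi',j)$, and since $\zeta_0+\zeta_1$ stays in a bounded set with $|V|\in L^\infty(Y)$ and $|Y|<\infty$, dominated convergence yields $\int_Y V(x)e^{(\zeta_0+\zeta_1)\cdot x}\,dx\to\int_Y V(x)e^{-{\rm i}\xi\cdot x}\,dx$, and likewise $\int_Y e^{(\zeta_0+\zeta_1)\cdot x}\,dx\to\int_Y e^{-{\rm i}\xi\cdot x}\,dx$. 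For the coefficient of this last integral, the lemmas give $\zeta_0\cdot\zeta_1-\lambda\to-\tfrac12(|\xi'|^2+j^2)=-|\xi|^2/2$, so $-(\zeta_0\cdot\zeta_1-\lambda)\int_Y e^{(\zeta_0+\zeta_1)\cdot x}\,dx\to\tfrac{|\xi|^2}{2}\int_Y e^{-{\rm i}\xi\cdot x}\,dx$, which is the second term on the right of \eqref{eq:Fourier-of-V}. The term $S_{\theta,V}(\lambda,\zeta_0,\zeta_1)$ is simply carried along; its limit exists precisely because every other term in \eqref{eq:Id-Poinca-Steklov} has a limit, which is why the statement writes $\lim_{|\lambda|\to+\infty}S_{\theta,V}(\lambda,\zeta_0,\zeta_1)$.

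The main obstacle is showing that the remainder term $\int_Y R_\lambda(Ve_{\zeta_0})(x)\,V(x)e_{\zeta_1}(x)\,dx$ tends to $0$ as $|\lambda|\to+\infty$. The plan here is to estimate it by $\|V\|_\infty^2$ times $\|R_\lambda(Ve_{\zeta_0})\|_{L^2(Y)}\,\|e_{\zeta_1}\|_{L^2(Y)}$ (Cauchy--Schwarz, after bounding $\|Ve_{\zeta_1}\|\le M\|e_{\zeta_1}\|$), then use that $\|e_{\zeta_1}\|\le c$ uniformly (from the lemmas) and that $\|R_\lambda\|_{\mathcal L(L^2(Y))}\le 1/{\rm dist}(\lambda,{\rm sp}(A_\theta))\le 1/{\rm Im}(\lambda)$ because $A_\theta$ is self-adjoint; since ${\rm Im}(\lambda)\to+\infty$ and $\|Ve_{\zeta_0}\|\le M\|e_{\zeta_0}\|\le cM$ is bounded, the whole remainder is $O(1/{\rm Im}(\lambda))\to 0$. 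Combining all these limits in \eqref{eq:Id-Poinca-Steklov} gives exactly \eqref{eq:Fourier-of-V}. I would remark that the restriction $|\xi'|\neq0$ is what makes the constructions in the three lemmas possible (one needs a nonzero direction to pick $\eta'\perp\xi'$), and that this suffices since the Fourier transform is continuous, so knowing it off the measure-zero set $\{\xi'=0\}$ determines $1_Y V$.
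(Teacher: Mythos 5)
Your proposal is correct and follows essentially the same route as the paper: pass to the limit in \eqref{eq:Id-Poinca-Steklov}, using the convergence of $\zeta_0+\zeta_1$ and $\zeta_0\cdot\zeta_1-\lambda$ from Lemmas \ref{lem:Larger-than-zero}--\ref{lem:Equal-to-zero}, and kill the remainder via $\|R_\lambda\|\leq c/\mathrm{dist}(\lambda,\mathrm{sp}(A_\theta))\leq c/\mathrm{Im}(\lambda)\to 0$ together with the uniform bounds on $\|e_{\zeta_0}\|$ and $\|e_{\zeta_1}\|$. The sign issue you flag ($\zeta_0+\zeta_1\to-{\rm i}(\xi',-j)$ versus $e^{-{\rm i}\xi\cdot x}$ with $\xi=(\xi',j)$) is present in the paper's own statement as well and is harmless since $j$ ranges over all of $\Z$.
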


\begin{proof} The left hand side of the identity \eqref{eq:Id-Poinca-Steklov} converges clearly to the left hand side of what is claimed in relation \eqref{eq:Fourier-of-V}.

Now, on the one hand the second term in the right hand side of \eqref{eq:Id-Poinca-Steklov} converges to zero, since $(\zeta_{0}\cdot\zeta_{1} - \lambda)$ has a finite limit, while 
$$\int_{Y}{\rm e}^{(\zeta_{0} + \zeta_{1})\cdot x}\, dx \to \int_{Y}\exp(-{\rm i}\,\xi'\cdot x' + {\rm i}\,jx_{3})\, dx.$$
(Note that when $j \in {\Bbb Z}^*$ the latter integral is equal to zero, because $Y = \omega\times(0,2\pi)$ and we have $\int_{0}^{2\pi} \exp({\rm i}\,jx_{3})\, dx_{3} = 0$).

On the other hand, we have $\|e_{\zeta_{0}}\| + \|e_{\zeta_{1}}\| \leq c$ as $|\lambda| \to +\infty$, and we may remind that the resolvent $R_{\lambda}$ satisfies
$$\|R_{\lambda}\| \leq {c \over {\rm dist}(\lambda, {\rm sp}(A_{\theta}))} \to 0,$$
because ${\rm dist}(\lambda, {\rm sp}(A_{\theta}) ) \geq {\rm Im}(\lambda) \to + \infty$. Therefore the third term in the right hand side of \eqref{eq:Id-Poinca-Steklov} converges to zero, and the proof of the theorem is complete. 
\end{proof}

For later use, we state the following result regarding the behavior of $\zeta'_{0} + \zeta'_{1}$ and $e_{\zeta_{0}},e_{\zeta_{1}}$ as $|\lambda| \to \infty$: its proof needs only a close examination of the definitions of $\zeta'_{0},\zeta'_{1}$. 

\begin{lemma}\label{lem:Behaviour-zeta}
Let $j \in {\Bbb Z}$ and for any $\xi'\in {\Bbb R}^2$, with $|\xi'|\neq 0$, set $\xi := (\xi',j) \in {\Bbb R}^3$. According to whether $j \geq 1$ or $j \leq -1$ or $j = 0$, let $\lambda,\zeta_{0},\zeta_{1}$ be given by either Lemma \ref{lem:Larger-than-zero}, or Lemma \ref{lem:Less-than-zero}, or Lemma \ref{lem:Equal-to-zero}. Then, as $|\lambda| \to +\infty$ we have
\begin{equation}\label{eq:Behaviour-zeta}
\zeta_{0} + \zeta_{1} - {\rm i}(\xi',-j) = O\left(|\lambda|^{-1/2}\right),\qquad
\|e_{\zeta_{0}}\|_{L^2(\Gamma)} + \|e_{\zeta_{1}}\|_{L^2(\Gamma)} = O(1).
\end{equation}
\end{lemma}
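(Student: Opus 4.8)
The plan is to prove \eqref{eq:Behaviour-zeta} by a direct, elementary expansion of the explicit formulas for $\zeta'_{0}$ and $\zeta'_{1}$ supplied by Lemmas \ref{lem:Larger-than-zero}, \ref{lem:Less-than-zero} and \ref{lem:Equal-to-zero}, tracking every quantity to order $|\lambda|^{-1/2}$. The preliminary observation is that in each case $t$ and $s$ are tied by a relation of the form $t^{2}(1-s^{2}) \to \gamma$ with $\gamma \geq 0$ fixed (namely $\gamma = j(j+\theta/\pi)$ if $j \geq 1$, $\gamma = j(j-2+\theta/\pi)$ if $j \leq -1$, and $s \equiv 1$ if $j = 0$). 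Since $1+s \to 2$, this forces $1-s = O(t^{-2})$, whence $st = t + O(t^{-1})$; moreover $|\lambda| \asymp t^{2}$ — immediate from ${\rm Re}(\lambda) = t^{2}-s^{2}+O(1)$ and $0 \leq {\rm Im}(\lambda) = 2st \leq 2t$ — so producing an $O(t^{-1})$ error is the same as producing an $O(|\lambda|^{-1/2})$ error.

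For the first estimate in \eqref{eq:Behaviour-zeta}, I would write $\zeta'_{0} = ({\rm i}st - 1)\,w_{0}$ and $\zeta'_{1} = ({\rm i}t - s)\,w_{1}$ with the real vectors $w_{0} := -\xi'/(2st) + r_{0}\,\eta'$ and $w_{1} := -\xi'/(2t) - r_{1}\,\eta'$, where $r_{0}, r_{1}$ are the positive square roots $(1-|\xi'|^{2}/(4s^{2}t^{2}))^{1/2}$ and $(1-|\xi'|^{2}/(4t^{2}))^{1/2}$; the orthogonality $\xi'\cdot\eta' = 0$ together with $|\eta'| = 1$ yields the crucial identity $|w_{0}| = |w_{1}| = 1$. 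Taking real and imaginary parts gives ${\rm Re}\,\zeta'_{0} = -w_{0}$, ${\rm Im}\,\zeta'_{0} = st\,w_{0}$, ${\rm Re}\,\zeta'_{1} = -s\,w_{1}$, ${\rm Im}\,\zeta'_{1} = t\,w_{1}$, and the decisive point is that in the \emph{sum} $\zeta'_{0}+\zeta'_{1}$ the terms carrying a factor $t$ along $\eta'$ cancel to leading order: since $r_{0}, r_{1} = 1 + O(t^{-2})$ and $s = 1 + O(t^{-2})$, one gets $s r_{0} - r_{1} = O(t^{-2})$ and $s r_{1} - r_{0} = O(t^{-2})$, so that ${\rm Re}(\zeta'_{0}+\zeta'_{1})$ reduces to the $\xi'$-terms, of size $O(t^{-1})$, and ${\rm Im}(\zeta'_{0}+\zeta'_{1}) = st\,w_{0}+t\,w_{1} = -\xi' + t(s r_{0}-r_{1})\,\eta' = -\xi' + O(t^{-1})$. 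Since, by the deliberate choices made in the three lemmas, the (purely imaginary) $x_{3}$-components of $\zeta_{0}$ and $\zeta_{1}$ add up to ${\rm i}j$ exactly, one concludes that $\zeta_{0}+\zeta_{1}$ equals the limit identified in Lemmas \ref{lem:Larger-than-zero}--\ref{lem:Equal-to-zero}, up to an error $O(t^{-1}) = O(|\lambda|^{-1/2})$, which is the first half of \eqref{eq:Behaviour-zeta}. (For $j = 0$, where no $s$ occurs, the cancellation along $\eta'$ is in fact exact, leaving only the $\xi'/t$ error.)

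For the second estimate I would observe that $|e_{\zeta_{\ell}}(x)| = \exp({\rm Re}(\zeta_{\ell})\cdot x)$, that the $x_{3}$-component of ${\rm Re}(\zeta_{\ell})$ vanishes, and that — again by $|w_{0}| = |w_{1}| = 1$ — one has $|{\rm Re}(\zeta_{0})| = |w_{0}| = 1$ and $|{\rm Re}(\zeta_{1})| = s\,|w_{1}| = s \leq 1$ (and $|{\rm Re}(\zeta_{1})| = 1$ in the case $j=0$). Hence $|e_{\zeta_{\ell}}(x)| \leq \exp(|x|) \leq \exp(R)$ with $R := \sup_{x\in\Gamma}|x| < \infty$, because $\Gamma = \partial\omega\times[0,2\pi]$ is bounded, so $\|e_{\zeta_{\ell}}\|_{L^{2}(\Gamma)} \leq \exp(R)\,|\Gamma|^{1/2} = O(1)$ uniformly, which is the second half of \eqref{eq:Behaviour-zeta}.

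The argument is routine once the cancellation mechanism is seen; the only delicate point is to verify that the two $O(t)$-size contributions along $\eta'$ genuinely annihilate each other — which is exactly where the prescribed link between $t$ and $s$, and the identity $|w_{\ell}| = 1$, are used — and to keep the resulting error uniformly of size $O(t^{-1})$ as $|\lambda| \to \infty$.
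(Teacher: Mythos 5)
Your proof is correct and is precisely the ``close examination of the definitions of $\zeta'_{0},\zeta'_{1}$'' that the paper invokes without writing out: the identity $|w_{0}|=|w_{1}|=1$, the relation $t^{2}(1-s^{2})\to\gamma$ forcing $1-s=O(t^{-2})$, the resulting cancellation of the $O(t)$ terms along $\eta'$, the exact cancellation in the $x_{3}$-components, and $|\lambda|\asymp t^{2}$ all check out, as does the pointwise bound $|e_{\zeta_{\ell}}(x)|\leq e^{|x|}$ on the bounded set $\Gamma$. Note only that the displayed formula \eqref{eq:Behaviour-zeta} has a sign typo (it should read $\zeta_{0}+\zeta_{1}+{\rm i}(\xi',-j)$ to match the limit $\zeta_{0}+\zeta_{1}\to-{\rm i}(\xi',-j)$ of Lemmas \ref{lem:Larger-than-zero}--\ref{lem:Equal-to-zero}), which you implicitly and correctly resolve.
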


\begin{remark}
While $\lambda,\zeta_{0},\zeta_{1}$ are as in the above lemma, let $u$ be the solution of \eqref{eq:u-e-zeta} with $\zeta = \zeta_{0}$. Since on the one hand $\|e_{\zeta}\|_{L^2(Y)}$ is bounded, and on the other hand, thanks to \eqref{eq:Diff-u-e-zeta}, we have $u - e_{\zeta} = -R_{\lambda}(Ve_{\zeta})$, so that  $\|u-e_{\zeta}\|_{L^2(Y)} \to 0$ as $|\lambda|\to\infty$, because $\|R_{\lambda}\|_{L^2(Y)\to L^2(Y)} \to 0$. Therefore, for a positive constant $c$, we have $\|u\|_{L^2} \leq c$.

In the same manner, if instead of \eqref{eq:u-e-zeta} we consider the following equation
\begin{equation}\label{eq:u*-e-zeta}
\left\{ 
\begin{array}{rcll} 
-\Delta u_{*} + Vu_{*} - \overline{\lambda} u_{*} & = & 0, & \mbox{in}\ Y ,\\ 
u_{*}(\sigma) & = & e_{\zeta_{*1}}(\sigma),& \sigma\in \Gamma,\\
 u_{*}(x',2\pi) & = & {\rm e}^{{\rm i}\theta} u_{*}(x',0) , & x'\in\omega,\\
\partial_{3} u_{*}(x',2\pi) & = & {\rm e}^{{\rm i}\theta}\partial_{3} u_{*}(x',0) , & x'\in\omega,
\end{array}\right.
\end{equation}
since $e_{*\zeta_{1}} \in H^1_{\theta}(Y)$ and $-\Delta e_{*\zeta_{1}} = \overline{\lambda}e_{\zeta_{1}}$, we may apply the same arguments invoked for $u$, and conclude that $\|u_{*}\|_{L^2(Y)} \leq c$ for some positive constant.
\end{remark}

These above observations will be used in Section \ref{sec:Proof-main}, therefore for the reader's convenience we state the following lemma:

\begin{lemma}\label{lem:Estim-u-e-zeta}
Let $M,\lambda,\zeta_{0},\zeta_{1}$, as well as $V$, be as in Lemma \ref{lem:Behaviour-zeta}, and let $u$ be the solution of \eqref{eq:u-e-zeta}, and $u_{*}$ be that of \eqref{eq:u*-e-zeta}. Then, for a constant depending only on $M$ and $\omega$, 
\begin{equation}\label{eq:Estim-u-e-zeta}
\sum_{k \geq 1} {|\langle \psi_{k},e_{\zeta_{0}}\rangle|^2 \over |\lambda - \lambda_{k}|^2} 
+ \sum_{k \geq 1} {|\langle \psi_{k},e_{*\zeta_{1}}\rangle|^2 \over |\overline{\lambda} - \lambda_{k}|^2}
= \|u\|_{L^2(Y)}^2 + \|u_{*}\|_{L^2(Y)}^2 \leq c.
\end{equation}
\end{lemma}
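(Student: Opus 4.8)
The plan is to read off the identity \eqref{eq:Estim-u-e-zeta} directly from the explicit series representation of the solutions $u$ and $u_*$ provided by Lemma \ref{lem:Resolution}, and then to control the two $L^2(Y)$-norms uniformly in $\lambda$ using the estimates already gathered in Lemma \ref{lem:Behaviour-zeta} together with the decay of the resolvent norm. Concretely, since $e_{\zeta_0}\in H^{1/2}_\theta(\partial Y)$ and $\lambda\notin{\rm sp}(A_\theta)$, Lemma \ref{lem:Resolution} applied with $f = e_{\zeta_0}$ gives $u = \sum_{k\geq1}\frac{\alpha_k}{\lambda-\lambda_k}\phi_{\theta,k}$ with $\alpha_k = \langle\psi_k,e_{\zeta_0}\rangle$, and the Parseval identity \eqref{eq:Series-u-lambda} yields exactly $\|u\|_{L^2(Y)}^2 = \sum_{k\geq1}\frac{|\langle\psi_k,e_{\zeta_0}\rangle|^2}{|\lambda-\lambda_k|^2}$. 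The same argument applied to the equation \eqref{eq:u*-e-zeta}, whose data is $e_{*\zeta_1}\in H^{1/2}_\theta(\partial Y)$ and whose spectral parameter is $\overline\lambda$ (again outside ${\rm sp}(A_\theta)$ since the spectrum is real and $\mathrm{Im}\,\lambda\neq0$), gives $\|u_*\|_{L^2(Y)}^2 = \sum_{k\geq1}\frac{|\langle\psi_k,e_{*\zeta_1}\rangle|^2}{|\overline\lambda-\lambda_k|^2}$. Adding the two identities establishes the stated equality between the left-hand side of \eqref{eq:Estim-u-e-zeta} and $\|u\|^2 + \|u_*\|^2$.

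It then remains to prove the bound $\|u\|_{L^2(Y)}^2 + \|u_*\|_{L^2(Y)}^2 \leq c$ with $c$ depending only on $M$ and $\omega$. For this I would follow the reasoning already sketched in the Remark preceding the lemma statement: write $u = e_{\zeta_0} + \psi$ with $\psi = -R_\lambda(Ve_{\zeta_0}) \in D(A_\theta)$, so that
\[
\|u\|_{L^2(Y)} \leq \|e_{\zeta_0}\|_{L^2(Y)} + \|R_\lambda\|_{L^2(Y)\to L^2(Y)}\,\|V\|_\infty\,\|e_{\zeta_0}\|_{L^2(Y)}.
\]
By Lemma \ref{lem:Larger-than-zero}, \ref{lem:Less-than-zero} or \ref{lem:Equal-to-zero} (according to the sign of $j$), we have $\|e_{\zeta_0}\|_{L^2(Y)} \leq c$ with $c$ independent of $\lambda$ — in fact depending only on $\omega$, since it comes from integrating $|\exp(\zeta_0\cdot x)|$ over the bounded set $Y$ and the real parts of the $\zeta_0$ that appear are controlled. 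Moreover $\|V\|_\infty \leq M$, and since ${\rm dist}(\lambda,{\rm sp}(A_\theta)) \geq \mathrm{Im}(\lambda) \to +\infty$, we have $\|R_\lambda\| \leq c/\mathrm{Im}(\lambda)$, which is bounded (indeed tends to $0$) as $|\lambda|\to\infty$. Thus $\|u\|_{L^2(Y)}$ is bounded by a constant of the required form; the argument for $u_*$ is identical, using $e_{*\zeta_1}\in H^1_\theta(Y)$, $-\Delta e_{*\zeta_1} = \overline\lambda e_{*\zeta_1}$, and $R_{\overline\lambda} = (A_\theta - \overline\lambda)^{-1}$ with $\|R_{\overline\lambda}\|\leq c/|\mathrm{Im}(\lambda)|$.

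One point deserves care: the claim that the constant depends \emph{only} on $M$ and $\omega$, not on $\theta$. The bound $\|e_{\zeta_0}\|_{L^2(Y)}\leq c$ must be uniform in $\theta\in[0,2\pi)$, which is clear since the $\theta$-dependence of $\zeta_0,\zeta_1$ enters only through the third component (bounded by a fixed quantity on $[0,2\pi)$) and through the choice of $t$, and in all three lemmas the real part of $\zeta_0'$ stays bounded as $s\to1$ or $t\to\infty$. Likewise the resolvent estimate ${\rm dist}(\lambda,{\rm sp}(A_\theta))\geq \mathrm{Im}(\lambda)$ holds because ${\rm sp}(A_\theta)\subset\mathbb R$ for every $\theta$, with the constant in $\|R_\lambda\|\leq c/\mathrm{dist}$ being the universal constant $1$. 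I expect the only mildly delicate step to be bookkeeping these uniformities; the structural part of the proof — the Parseval identity plus the resolvent splitting — is immediate from Lemma \ref{lem:Resolution} and the preceding Remark.
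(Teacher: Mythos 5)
Your proof is correct and follows essentially the same route as the paper: the equality is exactly \eqref{eq:Series-u-lambda} from Lemma \ref{lem:Resolution} applied with $f=e_{\zeta_{0}}$ (resp.\ $f=e_{*\zeta_{1}}$ and spectral parameter $\overline{\lambda}$), and the uniform bound is the resolvent splitting $u-e_{\zeta_{0}}=-R_{\lambda}(Ve_{\zeta_{0}})$ from \eqref{eq:Diff-u-e-zeta} combined with $\|e_{\zeta_{\ell}}\|_{L^2(Y)}\leq c(\omega)$ and $\|R_{\lambda}\|\leq 1/\mathrm{Im}(\lambda)$, which is precisely the content of the Remark the authors place before the lemma. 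Your extra bookkeeping on the uniformity in $\theta$ is a welcome addition that the paper leaves implicit.
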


\medskip

In the next section we use the representation formula \eqref{eq:Fourier-of-V}, together with the observations made in Section \ref{sec:Isozaki}, to prove the main results stated in Theorems \ref{thm:Uniqueness-QP}--\ref{thm:Uniqueness-Asymptotic}.


\section{Proof of the main results}
\label{sec:Proof-main}

In this entire section, we write $\theta$ instead of $\theta_0$. 
For $m = 1$ and $m = 2$ consider two potentials $V_{m}$ satisfying the assumptions of Theorem \ref{thm:Uniqueness-QP}, and denote by $A_{m,\theta}$ the associated operators defined by \eqref{eq:Def-A-theta}. 

Let $(\lambda_{m,k}(\theta),\phi_{m,\theta,k})_{k \geq 1}$ be the sequence of eigenvalues and eigenfunctions of $A_{m,\theta}$.
Once the value of $\theta$, or $\theta_{0}$, is fixed, for the sake of the simplicity of notations, we shall denote the operators, and their eigenvalues and eigenfunctions as 
$$A_{m} := A_{m,\theta},\quad
\lambda_{m,k} := \lambda_{m,k}(\theta),\quad
\mbox{and}\quad
\phi_{m,k} := \phi_{m,\theta,k},\quad
\psi_{m,k} := \psi_{m,\theta,k}.$$ 
Consider now $\lambda \in {\Bbb C}$ and $\mu \in {\Bbb R}$, such that $\lambda,\mu \notin {\rm sp}(A_{1,\theta}) \cup {\rm sp}(A_{2,\theta})$. For  $f \in H_{\theta}^{1/2}(\Gamma)$ denote by $u_{m,\lambda}$ the solution to  \eqref{eq1} where $V := V_{m}$, and also recall that
$$u_{m,\lambda} = \sum_{k\geq 1}{\alpha_{m,k} \over \lambda - \lambda_{m,k}}\qquad\mbox{where}\qquad
\alpha_{m, k} := \langle \psi_{m,k},f\rangle.$$

We recall also that thanks to the variational characterization of the eigenvalues of the self-adjoint operators $A_{m}$, one can easily see that 
\begin{equation}\label{eq:Estim-ecart-vp}
|\lambda_{1,k} - \lambda_{2,k}| \leq  \|V_{1} - V_{2}\|_{\infty}.
\end{equation}
Next we introduce a few notations and conventions in order to make the proofs more clear. 

We split ${\Bbb N}^*$ into two subsets of integers $k \geq 1$, according to whether we have $(\lambda_{1,k},\psi_{1,k}) = (\lambda_{2,k},\psi_{2,k})$ or not: more precisely we set
\begin{equation}\label{eq:Def-K-0}
{\Bbb K}_{1} := \left\{ k \geq 1 \; ; \; (\lambda_{1,k},\psi_{1,k}) = 
(\lambda_{2,k},\psi_{2,k}) \right\}, \qquad\mbox{and}\qquad
{\Bbb K}_{0} := {\Bbb N}^* \setminus {\Bbb K}_{1}.
\end{equation}
Moreover, when $k \in {\Bbb K}_{1}$ we drop the index $m=1$ or $m=2$ for the eigenvalues and the normal derivatives of the eigenfunctions, that is we denote by $\lambda_{k}$ and $\psi_{k}$, as well as $\alpha_{k}$, the common value of these entities.
Now, $f$ being fixed, with the notations of Lemmas \ref{lem:Resolution} and \ref{lem:v-lambda-mu} we denote by $v_{m,\lambda,\mu} := u_{m,\lambda} - u_{m,\mu}$ the solution of \eqref{eq:v-lambda} where $V$ is replaced by $V_{m}$, and we set
\begin{equation}\label{eq:Def-F-m}
F_{m}(\lambda,\mu,f) := \sum_{k \in {\Bbb K}_{0}} {(\mu - \lambda)\alpha_{m,k}  \over (\lambda - \lambda_{m,k})(\mu - \lambda_{m,k})} \, \psi_{m,k}
\end{equation}
and analogously (note that for $k\in {\Bbb K}_{1}$ we write $\lambda_{k} := \lambda_{1,k}=\lambda_{2,k}$ and $\psi_{k} := \psi_{1,k} = \psi_{2,k}$)
\begin{equation}\label{eq:Def-G}
G(\lambda,\mu,f) := \sum_{k \in {\Bbb K}_{1}} {(\mu - \lambda)\alpha_{k}\over (\lambda - \lambda_{k})(\mu - \lambda_{k})} \, \psi_{k} .
\end{equation}
Using the notations introduced above, and according to \eqref{eq:v-Normal-Deriv} in Lemma \ref{lem:v-lambda-mu},  we have
\begin{equation}\label{eq:Deriv-Normal-m}
{\partial v_{m,\lambda,\mu} \over \partial{\bf n}} = F_{m}(\lambda,\mu,f) + G(\lambda,\mu,f).
\end{equation}

Recall that in Lemma \ref{lem:z-mu} we have set $z_{\mu} = u_{1,\mu} - u_{2,\mu}$, and thus writing the above identity \eqref{eq:Deriv-Normal-m} for $m=1$ and $m=2$, and then subtracting the resulting equations, we end up with a new relation, namely
\begin{equation}\label{eq:Id-lambda-mu}
{\partial u_{1,\lambda} \over \partial{\bf n}} - 
{\partial u_{2,\lambda} \over \partial{\bf n}} = 
{\partial z_{\mu} \over \partial{\bf n}} 
+ F_{1}(\lambda,\mu,f) - F_{2}(\lambda,\mu,f).
\end{equation}
It is convenient to set
\begin{equation}\label{eq:Def-F-*}
F_{*m}(\lambda,f) := \sum_{k \in {\Bbb K}_{0}} {\alpha_{m,k} \over \lambda - \lambda_{m,k}} \, \psi_{m,k}, 
\end{equation}
so that while $\lambda$ and $f$ are fixed, upon letting $\mu \to -\infty$, first we shall prove that
$$F_{m}(\lambda,\mu,f) \to F_{*m}(\lambda,f),$$
in an appropriate sense (see below Lemma \ref{lem:Lim-mu-F1-F2}). Then, thanks to Lemma \ref{lem:z-mu}, as $\mu\to -\infty$, we obtain from \eqref{eq:Id-lambda-mu} that
\begin{equation}\label{eq:Id-lambda}
{\partial u_{1,\lambda} \over \partial{\bf n}} - 
{\partial u_{2,\lambda} \over \partial{\bf n}} = 
F_{*1}(\lambda,f) - F_{*2}(\lambda,f).
\end{equation}
Now choose $\lambda, \zeta_{0},\zeta_{1}$ as in Theorem \ref{thm:Isozaki}, and $f := e_{\zeta_{0}}$. Then the identity \eqref{eq:Id-lambda} and Definition \ref{def:S-V} yield
\begin{equation}\label{eq:S-F-*}
S_{\theta,V_{1}}(\lambda,\zeta_{0},\zeta_{1}) - S_{\theta,V_{2}}(\lambda,\zeta_{0},\zeta_{1}) = 
\langle  e_{*\zeta_{1}}, F_{*1}(\lambda,e_{\zeta_{0}}) - 
F_{*2}(\lambda,e_{\zeta_{0}}) \rangle.
\end{equation}
On the other hand, thanks to \eqref{eq:Fourier-of-V} of Theorem \ref{thm:Isozaki}, recall  that we have
\begin{equation}\label{eq:V-hat-and-S1-S2}
\int_{Y}\! (V_{1}-V_{2})(x)\, {\rm e}^{-{\rm i}\,\xi\cdot x}\, dx = 
\lim_{|\lambda| \to +\infty}\left(S_{\theta,V_{1}}(\lambda,\zeta_{0},\zeta_{1}) - S_{\theta,V_{2}}(\lambda,\zeta_{0},\zeta_{1})\right).\end{equation}
One sees that in order to obtain the Fourier transform of $V_{1}-V_{2}$, we have to determine the limit in \eqref{eq:S-F-*} as $|\lambda|\to \infty$. 
Now, since the right hand side of \eqref{eq:S-F-*} is given by the limit of the series
\begin{eqnarray}
&\langle e_{*\zeta_{1}}, F_{1}(\lambda,\mu,e_{\zeta_{0}}) - F_{2}(\lambda,\mu,e_{\zeta_{0}}) \rangle &= \label{eq:Diff-F1-F2}\\
& &\hskip-3.5cm\displaystyle(\mu - \lambda)\sum_{k \in {\Bbb K}_{0}}\left[ {\langle \psi_{1,k}, e_{\zeta_{0}}\rangle\, \langle e_{*\zeta_{1}},\psi_{1,k} \rangle  \over (\lambda - \lambda_{1,k})(\mu - \lambda_{1,k})}
- {\langle \psi_{2,k},e_{\zeta_{0}}\rangle\, \langle e_{*\zeta_{1}},\psi_{2,k} \rangle  \over (\lambda - \lambda_{2,k})(\mu - \lambda_{2,k})}\right], \nonumber
\end{eqnarray}
as $\mu \to -\infty$, we have to investigate under which assumptions we may find the limit of the above series as $\mu\to-\infty$ and $|\lambda|\to\infty$: once this is done, then \eqref{eq:V-hat-and-S1-S2} yields the appropriate interpretaion about the Fourier transform of $V_{1}-V_{2}$.

\medskip

We introduce two functions $f_{\lambda,\mu} : [-M,+\infty) \dans {\Bbb C}$ and $g := L^2(\Gamma) \dans {\Bbb C}$ such that, for each $k \geq 1$ fixed and $m=1$ or $m=2$, the terms of the series can be written as
$$(\mu-\lambda){\langle \psi_{m,k}, e_{\zeta_{0}}\rangle\, \langle e_{*\zeta_{1}},\psi_{m,k} \rangle  \over (\lambda - \lambda_{m,k})(\mu - \lambda_{m,k})} =
f_{\lambda,\mu}(\lambda_{m,k})\,g(\psi_{m,k}).$$
(Here recal that $M > 0$ is such that $\|V_{m}\|_{\infty} \leq M$). So in a first step we state and show the following elementary lemmas.

\begin{lemma}\label{lem:Estim-1}
For ${\rm Im}(\lambda) \geq 1$ and $\mu \leq - (M+1)$ where $M \geq 0$, let $f_{\lambda,\mu}: [-M,+\infty) \to {\Bbb C}$ defined by
$$f_{\lambda,\mu}(\tau) := {\mu - \lambda \over (\lambda - \tau)(\mu - \tau)}.$$
Then for $\tau_{1} , \tau_{2} \geq -M$ we have
\begin{equation}\label{eq:Estim-1}
|f_{\lambda,\mu}(\tau_{1}) - f_{\lambda,\mu}(\tau_{2})| \leq 2\,|\tau_{1} - \tau_{2}|\, \max_{\tau \in [\tau_{1},\tau_{2}]}\left[{1 \over |\lambda -\tau|^2} + {1 \over |\mu - \tau|^2}\right].
\end{equation}
\end{lemma}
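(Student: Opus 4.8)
The statement to be proved is Lemma~\ref{lem:Estim-1}, a purely elementary Lipschitz-type estimate for the scalar function $f_{\lambda,\mu}(\tau) := (\mu-\lambda)/[(\lambda-\tau)(\mu-\tau)]$ on the half-line $[-M,+\infty)$, under the constraints ${\rm Im}(\lambda)\geq 1$ and $\mu\leq-(M+1)$. The plan is to compute $f_{\lambda,\mu}'(\tau)$ explicitly and then apply the mean value theorem (along the real segment joining $\tau_1$ and $\tau_2$, noting $f_{\lambda,\mu}$ is real-analytic in $\tau$ away from the poles $\lambda,\mu$, both of which lie off $[-M,+\infty)$ by the hypotheses).

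**Key computation.** Observe first the partial-fraction identity
\[
f_{\lambda,\mu}(\tau) = \frac{\mu-\lambda}{(\lambda-\tau)(\mu-\tau)} = \frac{1}{\lambda-\tau} - \frac{1}{\mu-\tau},
\]
which holds since $\tfrac{1}{\lambda-\tau}-\tfrac{1}{\mu-\tau} = \tfrac{(\mu-\tau)-(\lambda-\tau)}{(\lambda-\tau)(\mu-\tau)} = \tfrac{\mu-\lambda}{(\lambda-\tau)(\mu-\tau)}$. Differentiating in $\tau$ gives immediately
\[
f_{\lambda,\mu}'(\tau) = \frac{1}{(\lambda-\tau)^2} - \frac{1}{(\mu-\tau)^2},
\]
hence $|f_{\lambda,\mu}'(\tau)| \leq \tfrac{1}{|\lambda-\tau|^2} + \tfrac{1}{|\mu-\tau|^2}$ for every $\tau\geq -M$. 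The denominators never vanish on $[-M,+\infty)$: indeed $|\lambda-\tau|\geq {\rm Im}(\lambda)\geq 1>0$, and $|\mu-\tau| = \tau-\mu \geq -M-\mu \geq 1 > 0$.

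**Conclusion via the mean value theorem.** For real $\tau_1,\tau_2\geq -M$, since $f_{\lambda,\mu}$ is $C^1$ (in fact analytic) on the closed interval $[\tau_1,\tau_2]$ (or $[\tau_2,\tau_1]$), we may apply the mean value inequality to the real and imaginary parts separately — or, more simply, use $|f_{\lambda,\mu}(\tau_1)-f_{\lambda,\mu}(\tau_2)| = |\int_{\tau_2}^{\tau_1} f_{\lambda,\mu}'(\tau)\,d\tau| \leq |\tau_1-\tau_2|\,\max_{\tau\in[\tau_1,\tau_2]}|f_{\lambda,\mu}'(\tau)|$ — to obtain
\[
|f_{\lambda,\mu}(\tau_1)-f_{\lambda,\mu}(\tau_2)| \leq |\tau_1-\tau_2|\,\max_{\tau\in[\tau_1,\tau_2]}\left[\frac{1}{|\lambda-\tau|^2}+\frac{1}{|\mu-\tau|^2}\right].
\]
This already yields the claimed bound with constant $1$; the factor $2$ in \eqref{eq:Estim-1} is a harmless slackening (it is presumably there to accommodate a less sharp estimate such as applying the mean value theorem to $\tau\mapsto 1/(\lambda-\tau)$ and $\tau\mapsto 1/(\mu-\tau)$ individually, each contributing one of the two terms). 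I do not anticipate any real obstacle here: the only point requiring a word of care is verifying that the segment $[\tau_1,\tau_2]\subset[-M,+\infty)$ avoids both poles, which is exactly what the hypotheses ${\rm Im}(\lambda)\geq 1$ and $\mu\leq-(M+1)$ guarantee.
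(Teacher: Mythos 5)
Your proof is correct, and it follows the same overall route as the paper (write $f_{\lambda,\mu}(\tau_2)-f_{\lambda,\mu}(\tau_1)=\int_{\tau_1}^{\tau_2}f_{\lambda,\mu}'(\tau)\,d\tau$ and bound the derivative pointwise), but the way you bound $f_{\lambda,\mu}'$ is genuinely cleaner. The paper differentiates the quotient directly, obtaining $|f_{\lambda,\mu}'(\tau)|\leq |\lambda-\mu|\bigl(|\lambda-\tau|+|\mu-\tau|\bigr)/\bigl(|\lambda-\tau|^2|\mu-\tau|^2\bigr)$, and then needs the triangle inequality $|\lambda-\mu|\leq|\lambda-\tau|+|\mu-\tau|$ followed by the AM--GM step $2/(|\lambda-\tau||\mu-\tau|)\leq |\lambda-\tau|^{-2}+|\mu-\tau|^{-2}$ to arrive at the stated bound with the factor $2$. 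Your partial-fraction identity $f_{\lambda,\mu}(\tau)=\tfrac{1}{\lambda-\tau}-\tfrac{1}{\mu-\tau}$ gives $f_{\lambda,\mu}'(\tau)=\tfrac{1}{(\lambda-\tau)^2}-\tfrac{1}{(\mu-\tau)^2}$ at once, hence the bound with constant $1$, which trivially implies \eqref{eq:Estim-1}; your verification that both poles stay away from $[-M,+\infty)$ (via ${\rm Im}(\lambda)\geq 1$ and $\tau-\mu\geq 1$) matches what the hypotheses are there for. In short: same skeleton, sharper and shorter derivative estimate; nothing is missing.
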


\begin{proof}
Indeed, assuming for instance $\tau_{1} < \tau_{2}$, we have 
$$f_{\lambda,\mu}(\tau_{2}) - f_{\lambda,\mu}(\tau_{1}) =  \int_{\tau_{1}}^{\tau_{2}}f_{\lambda,\mu}'(\tau)\,d\tau.$$
This yields
$$|f_{\lambda,\mu}(\tau_{1}) - f_{\lambda,\mu}(\tau_{2})| \leq |\tau_{1} - \tau_{2}|\, \max_{\tau \in [\tau_{1},\tau_{2}]} {|\lambda -\mu|\left(|\lambda -\tau| + |\mu -\tau|\right) \over |\lambda -\tau|^2\cdot |\mu - \tau|^2 }.$$
Now it is clear that
$${|\lambda -\mu|\left(|\lambda -\tau| + |\mu -\tau|\right) \over |\lambda -\tau|^2\cdot |\mu - \tau|^2 } = 
{|\lambda -\mu| \over |\lambda -\tau|\cdot |\mu - \tau|^2 } +
{|\lambda -\mu| \over |\lambda -\tau|^2\cdot |\mu - \tau| },$$
and using the inequality $|\lambda - \mu| \leq |\lambda - \tau| + |\mu - \tau|$ we end up with
$${|\lambda -\mu|\left(|\lambda -\tau| + |\mu -\tau|\right) \over |\lambda -\tau|^2\cdot |\mu - \tau|^2 } \leq 
{1 \over |\mu - \tau|^2 } + {2 \over |\lambda -\tau|\cdot |\mu - \tau| }
+ {1\over |\lambda -\tau|^2 },$$
which, using the fact that 
$$ {2 \over |\lambda -\tau|\cdot |\mu - \tau| } \leq 
{1 \over |\mu - \tau|^2 } + { 1\over |\lambda -\tau|^2 } ,$$
yields the estimate \eqref{eq:Estim-1}.
\end{proof}

The next lemma takes care of the quadratic terms $\langle \psi_{m,k}, e_{\zeta_{0}}\rangle\, \langle e_{*\zeta_{1}},\psi_{m,k}\rangle$ appearing in \eqref{eq:Diff-F1-F2}:

\begin{lemma}\label{lem:Estim-2}
For $\psi\in L^2(\Gamma)$ define
$$g(\psi) := \langle \psi, e_{\zeta_{0}}\rangle\, \langle e_{*\zeta_{1}},\psi\rangle.$$
Then there exists a constant $c > 0$ such that, provided that ${\rm Im}(\lambda) \geq 1$ and the vectors $\zeta_{0}$ and $\zeta_{1}$ are as in Lemmas \ref{lem:Larger-than-zero}--\ref{lem:Equal-to-zero},  for all $\psi_{1},\psi_{2} \in L^2(\Gamma)$ we have
\begin{equation}\label{eq:Estim-2}
|g(\psi_{1}) - g(\psi_{2}) | \leq c\, \left( |\langle e_{*\zeta_{1}},\psi_{1}\rangle | + 
| \langle \psi_{2},e_{\zeta_{0}}\rangle|\right)\, \|\psi_{1} - \psi_{2}\|_{L^2(\Gamma)}.
\end{equation}
\end{lemma}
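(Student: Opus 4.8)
The plan is to exploit the fact that $g$ is built from the two additive (indeed $(conjugate)$-linear) functionals $\psi\mapsto\langle\psi,e_{\zeta_0}\rangle$ and $\psi\mapsto\langle e_{*\zeta_1},\psi\rangle$ on $L^2(\Gamma)$, so that the difference $g(\psi_1)-g(\psi_2)$ telescopes once a suitable mixed term is inserted. Concretely, adding and subtracting $\langle\psi_2,e_{\zeta_0}\rangle\,\langle e_{*\zeta_1},\psi_1\rangle$ I would write
\begin{equation*}
g(\psi_1)-g(\psi_2)=\langle\psi_1-\psi_2,e_{\zeta_0}\rangle\,\langle e_{*\zeta_1},\psi_1\rangle+\langle\psi_2,e_{\zeta_0}\rangle\,\langle e_{*\zeta_1},\psi_1-\psi_2\rangle,
\end{equation*}
this particular splitting being chosen precisely so that the surviving factors $\langle e_{*\zeta_1},\psi_1\rangle$ and $\langle\psi_2,e_{\zeta_0}\rangle$ are exactly those that appear in the right-hand side of \eqref{eq:Estim-2}.

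Next I would estimate the two remaining factors by the Cauchy--Schwarz inequality in $L^2(\Gamma)$, namely $|\langle\psi_1-\psi_2,e_{\zeta_0}\rangle|\le\|e_{\zeta_0}\|_{L^2(\Gamma)}\,\|\psi_1-\psi_2\|_{L^2(\Gamma)}$ and $|\langle e_{*\zeta_1},\psi_1-\psi_2\rangle|\le\|e_{*\zeta_1}\|_{L^2(\Gamma)}\,\|\psi_1-\psi_2\|_{L^2(\Gamma)}$. Combining these with the triangle inequality yields
\begin{equation*}
|g(\psi_1)-g(\psi_2)|\le\big(\|e_{\zeta_0}\|_{L^2(\Gamma)}+\|e_{*\zeta_1}\|_{L^2(\Gamma)}\big)\big(|\langle e_{*\zeta_1},\psi_1\rangle|+|\langle\psi_2,e_{\zeta_0}\rangle|\big)\,\|\psi_1-\psi_2\|_{L^2(\Gamma)},
\end{equation*}
so it remains only to absorb the prefactor into a constant $c$ that is independent of the parameters $\lambda,\zeta_0,\zeta_1$ (and of $\psi_1,\psi_2$).

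The only point needing a word of justification --- and really the "hard part", such as it is --- is the uniform boundedness of $\|e_{\zeta_0}\|_{L^2(\Gamma)}+\|e_{*\zeta_1}\|_{L^2(\Gamma)}$ over the admissible parameter range. For this I would note that since $x\in\Gamma$ is real, $e_{*\zeta_1}(x)=\exp(\overline{\zeta_1}\cdot x)=\overline{\exp(\zeta_1\cdot x)}=\overline{e_{\zeta_1}(x)}$, whence $|e_{*\zeta_1}(x)|=|e_{\zeta_1}(x)|$ and therefore $\|e_{*\zeta_1}\|_{L^2(\Gamma)}=\|e_{\zeta_1}\|_{L^2(\Gamma)}$; then the bound $\|e_{\zeta_0}\|_{L^2(\Gamma)}+\|e_{\zeta_1}\|_{L^2(\Gamma)}=O(1)$ provided by Lemma \ref{lem:Behaviour-zeta} (equivalently, the $s$- and $t$-independent bounds $\|e_{\zeta_0}\|+\|e_{\zeta_1}\|\le c$ recorded in Lemmas \ref{lem:Larger-than-zero}--\ref{lem:Equal-to-zero}) furnishes a constant $c$ depending only on $\omega$, which completes the proof. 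No genuine obstacle arises here; the hypothesis ${\rm Im}(\lambda)\ge 1$ is used only to place $\lambda$ in the regime where $\zeta_0,\zeta_1$ are defined and these uniform bounds are available.
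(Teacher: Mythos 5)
Your proof is correct and follows exactly the paper's own argument: the same telescoping decomposition $g(\psi_1)-g(\psi_2)=\langle\psi_1-\psi_2,e_{\zeta_0}\rangle\langle e_{*\zeta_1},\psi_1\rangle+\langle\psi_2,e_{\zeta_0}\rangle\langle e_{*\zeta_1},\psi_1-\psi_2\rangle$, followed by Cauchy--Schwarz and the uniform bound $\|e_{\zeta_0}\|_{L^2(\Gamma)}+\|e_{\zeta_1}\|_{L^2(\Gamma)}\le c$ from Lemma \ref{lem:Behaviour-zeta}. The extra remark that $|e_{*\zeta_1}|=|e_{\zeta_1}|$ pointwise is a harmless (and correct) elaboration of a step the paper leaves implicit.
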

Indeed, according to \eqref{eq:Behaviour-zeta} of Lemma \ref{lem:Behaviour-zeta} we have  $\|e_{\zeta_{j}}\|_{L^2(\Gamma)} \leq c $, and since
$$g(\psi_{1}) - g(\psi_{2}) = \langle \psi_{1} - \psi_{2},e_{\zeta_{0}} \rangle \langle e_{*\zeta_{1}},\psi_{1} \rangle +
\langle \psi_{2},e_{\zeta_{0}} \rangle \langle e_{*\zeta_{1}},\psi_{1} - \psi_{2} \rangle, $$
one sees that \eqref{eq:Estim-2} follows. \qed

\medskip
We can now state the following result regarding the limit in \eqref{eq:Diff-F1-F2} as the parameter $\mu \to -\infty$:

\begin{lemma}\label{lem:Lim-mu-F1-F2}
Assume that $\lambda,\zeta_{0},\zeta_{1}$ are as in Theorem \ref{thm:Isozaki} and that ${\rm Im}(\lambda) \geq 1$. Moreover assume that we have
$$\sum_{k\geq 1}\|\psi_{1,k} - \psi_{2,k}\|_{L^2(\Gamma)}^2  < \infty.$$
Then 
$$\sum_{k \in {\Bbb K}_{0}}
\left| 
{\langle \psi_{1,k}, e_{\zeta_{0}}\rangle\, \langle e_{*\zeta_{1}},\psi_{1,k} \rangle  \over \lambda - \lambda_{1,k}}
- {\langle \psi_{2,k},e_{\zeta_{0}}\rangle\, \langle e_{*\zeta_{1}},\psi_{2,k} \rangle  \over \lambda - \lambda_{2,k}}\right| < \infty$$
and we have 
\begin{eqnarray}
&\displaystyle\lim_{\mu\to -\infty}\langle e_{*\zeta_{1}}, F_{1}(\lambda,\mu,e_{\zeta_{0}}) - F_{2}(\lambda,\mu,e_{\zeta_{0}}) \rangle &= \label{eq:Lim-mu-F1-F2}\\
&& \hskip-4cm\sum_{k \in {\Bbb K}_{0}}
\left[ 
{\langle \psi_{1,k}, e_{\zeta_{0}}\rangle\, \langle e_{*\zeta_{1}},\psi_{1,k} \rangle  \over \lambda_{1,k} - \lambda}
- {\langle \psi_{2,k},e_{\zeta_{0}}\rangle\, \langle e_{*\zeta_{1}},\psi_{2,k} \rangle  \over \lambda_{2,k} - \lambda}\right]. \nonumber
\end{eqnarray}
\end{lemma}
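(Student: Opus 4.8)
The starting point is the identity \eqref{eq:Diff-F1-F2}, which already expresses
$\langle e_{*\zeta_{1}}, F_{1}(\lambda,\mu,e_{\zeta_{0}}) - F_{2}(\lambda,\mu,e_{\zeta_{0}}) \rangle$ as the series $\sum_{k\in{\Bbb K}_{0}}\bigl[f_{\lambda,\mu}(\lambda_{1,k})g(\psi_{1,k}) - f_{\lambda,\mu}(\lambda_{2,k})g(\psi_{2,k})\bigr]$, with $f_{\lambda,\mu}$ and $g$ the functions of Lemmas \ref{lem:Estim-1} and \ref{lem:Estim-2}. The whole game is to dominate the general term of this series by a \emph{summable sequence that is independent of $\mu$} (for $\mu$ real and sufficiently negative, say $\mu\le -\mu_{0}(\lambda,M)$). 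Granting such a domination, the conclusion follows from the dominated convergence theorem for series: for each fixed $k$ one has $f_{\lambda,\mu}(\lambda_{m,k}) \to (\lambda-\lambda_{m,k})^{-1}$ as $\mu\to-\infty$, because $(\mu-\lambda)/(\mu-\lambda_{m,k})\to 1$, so the series converges to the claimed expression \eqref{eq:Lim-mu-F1-F2} (up to the sign conventions of the statement); and the absolute convergence of that limiting series is literally the same estimate with $f_{\lambda,\mu}$ replaced by its pointwise limit $\tau\mapsto(\lambda-\tau)^{-1}$.

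The estimate itself rests on the elementary splitting of the general term via the cross term $f_{\lambda,\mu}(\lambda_{1,k})g(\psi_{2,k})$, namely
$$f_{\lambda,\mu}(\lambda_{1,k})g(\psi_{1,k}) - f_{\lambda,\mu}(\lambda_{2,k})g(\psi_{2,k}) = f_{\lambda,\mu}(\lambda_{1,k})\bigl(g(\psi_{1,k}) - g(\psi_{2,k})\bigr) + \bigl(f_{\lambda,\mu}(\lambda_{1,k}) - f_{\lambda,\mu}(\lambda_{2,k})\bigr)g(\psi_{2,k}).$$
For the first summand I would use $|f_{\lambda,\mu}(\lambda_{1,k})| \le |\lambda-\lambda_{1,k}|^{-1} + |\mu-\lambda_{1,k}|^{-1}$, which for $\mu\le-\mu_{0}$ is bounded by $2|\lambda-\lambda_{1,k}|^{-1}$ since $|\mu-\lambda|/|\mu-\tau|\le 2$ uniformly for $\tau\ge-M$; then Lemma \ref{lem:Estim-2}, the Cauchy--Schwarz inequality, the hypothesis $\sum_{k}\|\psi_{1,k}-\psi_{2,k}\|_{L^{2}(\Gamma)}^{2}<\infty$, and the Bessel-type bounds $\sum_{k}|\langle\psi_{m,k},e_{\zeta_{0}}\rangle|^{2}/|\lambda-\lambda_{m,k}|^{2}\le c$ and $\sum_{k}|\langle e_{*\zeta_{1}},\psi_{m,k}\rangle|^{2}/|\lambda-\lambda_{m,k}|^{2}\le c$ provided by Lemma \ref{lem:Estim-u-e-zeta} (applied with $V:=V_{1}$ and with $V:=V_{2}$, so with the same constant $c=c(M,\omega)$) yield a $\mu$-independent summable majorant. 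For the second summand I would invoke Lemma \ref{lem:Estim-1} to bound $|f_{\lambda,\mu}(\lambda_{1,k}) - f_{\lambda,\mu}(\lambda_{2,k})|$ by $2|\lambda_{1,k}-\lambda_{2,k}|\max_{\tau}\bigl(|\lambda-\tau|^{-2}+|\mu-\tau|^{-2}\bigr)$, use the variational estimate \eqref{eq:Estim-ecart-vp} to get $|\lambda_{1,k}-\lambda_{2,k}|\le 2M$, write $|g(\psi_{2,k})| = |\langle\psi_{2,k},e_{\zeta_{0}}\rangle|\,|\langle e_{*\zeta_{1}},\psi_{2,k}\rangle|$, and again close by Cauchy--Schwarz and Lemma \ref{lem:Estim-u-e-zeta}. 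Since $|\lambda_{1,k}-\lambda_{2,k}|\le 2M$, the quantities $|\lambda-\lambda_{1,k}|$, $|\lambda-\lambda_{2,k}|$ and $|\lambda-\tau|$ for $\tau$ between $\lambda_{1,k}$ and $\lambda_{2,k}$ are mutually comparable up to a constant depending only on $\lambda$ and $M$ (both grow like $\lambda_{m,k}$ and their differences are bounded), so one may freely interchange the index in the denominators with the index in the numerators when applying Lemma \ref{lem:Estim-u-e-zeta}; similarly $|\mu-\tau|\ge|\mu-\lambda_{1,k}|/C$ on that interval.

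I expect the sole difficulty to be organizational rather than conceptual: every estimate above must be made uniform in $\mu$ over $\mu\le-\mu_{0}(\lambda,M)$ so that the dominated convergence theorem genuinely applies, which forces one to keep careful track of the comparisons between $|\lambda-\lambda_{1,k}|$, $|\lambda-\lambda_{2,k}|$ and $|\mu-\lambda_{m,k}|$. No ingredient beyond Lemmas \ref{lem:Estim-1}, \ref{lem:Estim-2}, \ref{lem:Estim-u-e-zeta} and the variational bound \eqref{eq:Estim-ecart-vp} is needed; once the $\mu$-uniform summable majorant is in hand, passing to the limit $\mu\to-\infty$ simultaneously gives the absolute convergence of the limiting series and its value \eqref{eq:Lim-mu-F1-F2}.
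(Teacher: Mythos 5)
Your proposal is correct and follows essentially the same route as the paper: the paper also splits the general term into a ``difference of $f_{\lambda,\mu}$'' part and a ``difference of $g$'' part (using the cross term $f_{\lambda,\mu}(\lambda_{2,k})g(\psi_{1,k})$ instead of your $f_{\lambda,\mu}(\lambda_{1,k})g(\psi_{2,k})$, a purely cosmetic difference), then closes with Lemmas \ref{lem:Estim-1}, \ref{lem:Estim-2}, \ref{lem:Estim-u-e-zeta}, the bound \eqref{eq:Estim-ecart-vp}, the comparability of denominators as in \eqref{eq:Rapport-12}, and dominated convergence. Your parenthetical remark about sign conventions is also well taken, since the pointwise limit of $f_{\lambda,\mu}(\tau)$ is $(\lambda-\tau)^{-1}$ while \eqref{eq:Lim-mu-F1-F2} is written with $(\lambda_{m,k}-\lambda)^{-1}$; this overall sign is immaterial for the later absolute-value estimates.
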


\begin{proof}
For $\lambda$ fixed, we may write each term of the series appearing in \eqref{eq:Diff-F1-F2} as
\begin{equation}\label{eq:Series-F1-F2}
(\mu - \lambda)\left[ {\langle \psi_{1,k}, e_{\zeta_{0}}\rangle\, \langle e_{*\zeta_{1}},\psi_{1,k} \rangle  \over (\lambda - \lambda_{1,k})(\mu - \lambda_{1,k})}
- {\langle \psi_{2,k},e_{\zeta_{0}}\rangle\, \langle e_{*\zeta_{1}},\psi_{2,k} \rangle  \over (\lambda - \lambda_{2,k})(\mu - \lambda_{2,k})}\right] = A_{k}(\mu) + B_{k}(\mu)
\end{equation}
where for convenience, with the notations of Lemmas \ref{lem:Estim-1} and \ref{lem:Estim-2}, we have set
\begin{equation}\label{eq:Def-A-mu}
A_{k}(\mu) := \left(f_{\lambda,\mu}(\lambda_{1,k}) - f_{\lambda,\mu}(\lambda_{2,k})\right)g(\psi_{1,k}),
\end{equation}
and
\begin{equation}\label{eq:Def-B-mu}
B_{k}(\mu) := f_{\lambda,\mu}(\lambda_{2,k})\left(g(\psi_{1,k}) - g(\psi_{2,k})\right).
\end{equation}
Setting
\begin{equation}\label{eq:Def-A*}
A_{*k}(\lambda) := \left({1 \over \lambda_{1,k} - \lambda} - {1 \over \lambda_{2,k} - \lambda}\right)g(\psi_{1,k}),
\end{equation}
and 
\begin{equation}\label{eq:Def-B*}
B_{*k}(\lambda) := {1 \over \lambda_{2,k} - \lambda}\left(g(\psi_{1,k}) - g(\psi_{2,k})\right),
\end{equation}
it is clear that for each fixed $k \geq 1$ we have
$$
\lim_{\mu\to -\infty}A_{k}(\mu) = A_{*k}(\lambda),\qquad\mbox{and}\qquad
\lim_{\mu\to -\infty}B_{k}(\mu) = B_{*k}(\lambda).
$$
It is also clear that $A_{*k}(\lambda) + B_{*k}(\lambda)$ is precisely the generic term of the series appearing on the right hand side of \eqref{eq:Lim-mu-F1-F2}. Therefore we have only to justify the passage to the limit in the series as $\mu\to-\infty$, while $\lambda$ is fixed.

Denote by $\lambda_{k}^*$ a number between $\lambda_{1,k}$ and $\lambda_{2,k}$ where the maximum of
$$\left[{1 \over |\lambda -\tau|^2} + {1 \over |\mu - \tau|^2}\right]$$
is achieved as $\tau$ ranges between $\lambda_{1,k}$ and $\lambda_{2,k}$. 
Recall that thanks to \eqref{eq:Estim-ecart-vp}, when $t = \mu$ or $t=\lambda$ we have $|t - \lambda_{m,k}| \geq 1$, and thus
\begin{equation}\label{eq:Rapport-12}
\left|{t - \lambda_{2,k} \over t - \lambda_{1,k}}\right| =\left| 1 + {\lambda_{1,k} - \lambda_{2,k} \over t - \lambda_{1,k}}\right| \leq 1 + \|V_{1} - V_{2}\|_{\infty} \leq c,
\end{equation}
for some constant $c$ independent of $k$ and $t$. Indeed, for another constant $c>0$, and $m=1$ or $m=2$, for all $k$, one can see that we have also
$$\left|{t - \lambda_{k}^* \over t - \lambda_{m,k}}\right| 
+ \left|{t - \lambda_{m,k} \over t - \lambda_{k}^*}\right|\leq c.$$
Therefore, by \eqref{eq:Series-u-lambda}, for $\zeta = \zeta_{0}$ or $\zeta=\zeta_{1}$ and, when necessary, $\lambda$ replaced with $\mu$, we have
\begin{equation}\label{eq:Series-u-lambda-bis}
\sum_{k \geq 1}\left|{\langle \psi_{m,k},e_{\zeta}\rangle \over \lambda - \lambda_{k}^*}\right|^2 < \infty,\quad\mbox{and}\quad
\sum_{k \geq 1}\left|{\langle \psi_{m,k},e_{\zeta}\rangle \over \mu - \lambda_{k}^*}\right|^2 < \infty.
\end{equation}
From this, using Lemma \ref{lem:Estim-1} we infer that, setting $\delta_{0}:= \sup_{k\geq 1}|\lambda_{1,k} - \lambda_{2,k}|$,
$$ |A_{k}(\mu)| \leq 2\,\delta_{0} \,\left[ 
 {|g(\psi_{1,k})| \over |\mu - \lambda_{k}^*|^2} + 
{|g(\psi_{1,k})| \over |\lambda - \lambda_{k}^*|^2}\right].$$
Now, for $\mu \leq - (M+1)$, we have $\lambda_{k}^* - \mu \geq \lambda_{k}^* + M + 1$ where $M > 0$ is such that $\lambda_{m,k} \geq -M$ for all $k \geq 1$ and $m=1$ or $m=2$. It follows that for all $\mu \leq -(M+1)$ we have
$$
|A_{k}(\mu)| \leq 2\,\delta_{0} \,\left[ {|g(\psi_{1,k})| \over |\lambda_{k}^* + M +1|^2} + 
{|g(\psi_{1,k})| \over |\lambda - \lambda_{k}^*|^2}\right],
$$
where the right hand side is summable over $k$.
It is now clear that we may apply Lebesgue's dominated convergence theorem and deduce that
\begin{equation}\label{eq:Lim-A-mu}
\lim_{\mu\to-\infty}\sum_{k\in {\Bbb K}_{0}} A_{k}(\mu) = \sum_{k\in {\Bbb K}_{0}} \left({1 \over \lambda_{1,k} - \lambda} - {1 \over \lambda_{2,k} - \lambda}\right)g(\psi_{1,k}),
\end{equation}
and moreover we have
\begin{equation}\label{eq:Estim-sum-Ak}
\lim_{\mu\to-\infty}\sum_{k \in {\Bbb K}_{0}}|A_{k}(\mu)| \leq 2\,\delta_{0} \sum_{k \in {\Bbb K}_{0}}
{|g(\psi_{1,k})| \over |\lambda - \lambda_{k}^*|^2}.
\end{equation}

Regarding the terms $B_{k}(\mu)$ defined in \eqref{eq:Def-B-mu}, we proceed analogously: using Lemma \ref{lem:Estim-2} we have
$$|B_{k}(\mu)| \leq c\, |\lambda - \mu|  \,
{|\langle\psi_{2,k},e_{\zeta_{0}}\rangle| + |\langle e_{*\zeta_{1}},\psi_{1,k}\rangle| 
\over |\lambda - \lambda_{2,k}|\cdot |\mu - \lambda_{2,k}|}\, \|\psi_{1,k} - \psi_{2,k}\|_{L^2(\Gamma)}.$$
Again, as $\mu \to - \infty$ and $\lambda$ is fixed, we may find a positive constant $c_{1}$ such that $|\lambda - \mu| / |\mu - \lambda_{2,k}| \leq c_{1}$, so that finally (for another constant $c_{2} > 0$) we may write
$$|B_{k}(\mu)| \leq c_{2}\,
{|\langle\psi_{2,k},e_{\zeta_{0}}\rangle| + |\langle e_{*\zeta_{1}},\psi_{1,k}\rangle| 
\over |\lambda - \lambda_{2,k}|}\, \|\psi_{1,k} - \psi_{2,k}\|_{L^2(\Gamma)}.$$
Thanks to the assumption that
$$\delta_{1} := \left(\sum_{k\geq 1}\|\psi_{1,k} - \psi_{2,k}\|_{L^2(\Gamma)}^2\right)^{1/2} < \infty,$$
and the observation \eqref{eq:Series-u-lambda-bis}, we infer (using Young's inequality $2ab \leq a^2 + b^2$) that the right hand side of the above inequality is summable and applying once more Lebesgue's dominated convergence theorem we have
\begin{equation}\label{eq:Estim-sum-Bk}
\lim_{\mu\to -\infty}\sum_{k \geq N} |B_{k}(\mu)| \leq 2\,c_{2}\, \left( \sum_{k \geq N}
{|\langle\psi_{2,k},e_{\zeta_{0}}\rangle|^2 + |\langle e_{*\zeta_{1}},\psi_{1,k}\rangle|^2 
\over |\lambda - \lambda_{2,k}|^2}\right)^{1/2} \delta_{1},
\end{equation}
and 
$$\lim_{\mu\to-\infty}\sum_{k\in {\Bbb K}_{0}} B_{k}(\mu) = \sum_{k\in {\Bbb K}_{0}}
{1 \over \lambda_{2,k} - \lambda}\left(g(\psi_{1,k}) - g(\psi_{2,k})\right) .$$
At this point one sees that this, together with \eqref{eq:Lim-A-mu}, yield \eqref{eq:Lim-mu-F1-F2}, and the proof of the lemma is complete.
\end{proof}

\medskip
Next, $A_{*k}(\lambda)$ and $B_{*k}(\lambda)$ being defined by \eqref{eq:Def-A*} and \eqref{eq:Def-B*} we study the series 
$$\sum_{k\in {\Bbb K}_{0}}A_{*k}(\lambda), \qquad\mbox{and}\quad
\sum_{k\in {\Bbb K}_{0}}B_{*k}(\lambda).$$

\begin{lemma}\label{lem:Lim-lambda-A*k}
With the above assumptions on $\zeta_{0},\zeta_{1}$ and $V_{1},V_{2}$, assume that  ${\rm Im}(\lambda) \geq 1$ and let $A_{*k}$ be defined by \eqref{eq:Def-A*}. Then there exists a positive constant $c$ depending only on $\omega$ and $M$ such that for all integer $N \geq 1$ we we have 
$$\limsup_{|\lambda|\to\infty}\sum_{k \in {\Bbb K}_{0}}
\left|A_{*k}(\lambda)\right| \leq c\, \sup_{k \geq N}|\lambda_{1,k} - \lambda_{2,k}|.$$
\end{lemma}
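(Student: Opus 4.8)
The plan is to control $\sum_{k\in{\Bbb K}_0}|A_{*k}(\lambda)|$ by splitting the sum at the threshold $N$ and then letting $|\lambda|\to\infty$, using that along the family of parameters furnished by Lemmas \ref{lem:Larger-than-zero}--\ref{lem:Equal-to-zero} one has $|\lambda|\to\infty$ precisely when ${\rm Im}(\lambda)\to+\infty$. Recalling from \eqref{eq:Def-A*} that $A_{*k}(\lambda)=\bigl(\tfrac{1}{\lambda_{1,k}-\lambda}-\tfrac{1}{\lambda_{2,k}-\lambda}\bigr)g(\psi_{1,k})$ with $g(\psi_{1,k})=\langle\psi_{1,k},e_{\zeta_0}\rangle\langle e_{*\zeta_1},\psi_{1,k}\rangle$, one has
\[
|A_{*k}(\lambda)|=\frac{|\lambda_{1,k}-\lambda_{2,k}|}{|\lambda-\lambda_{1,k}|\,|\lambda-\lambda_{2,k}|}\,|\langle\psi_{1,k},e_{\zeta_0}\rangle|\,|\langle e_{*\zeta_1},\psi_{1,k}\rangle|.
\]
For the tail $k\ge N$ I would bound $|\lambda_{1,k}-\lambda_{2,k}|$ by $\delta_N:=\sup_{j\ge N}|\lambda_{1,j}-\lambda_{2,j}|$ and apply the Cauchy--Schwarz inequality to obtain
\[
\sum_{\substack{k\in{\Bbb K}_0,\,k\ge N}}|A_{*k}(\lambda)|\le\delta_N\Bigl(\sum_{k\ge1}\frac{|\langle\psi_{1,k},e_{\zeta_0}\rangle|^2}{|\lambda-\lambda_{1,k}|^2}\Bigr)^{1/2}\Bigl(\sum_{k\ge1}\frac{|\langle e_{*\zeta_1},\psi_{1,k}\rangle|^2}{|\lambda-\lambda_{2,k}|^2}\Bigr)^{1/2}.
\]

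The first factor is exactly $\|u_{1,\lambda}\|_{L^2(Y)}$, the solution of \eqref{eq1} with $V:=V_1$ and $f:=e_{\zeta_0}$, which is bounded via Lemma \ref{lem:Estim-u-e-zeta} by a constant depending only on $\omega$ and $M$. The second factor has the mismatch that its denominator involves $\lambda_{2,k}$ while its numerator involves $\psi_{1,k}$: I would remove it with the elementary comparison \eqref{eq:Rapport-12}, namely $|\lambda-\lambda_{2,k}|^{-2}\le c\,|\lambda-\lambda_{1,k}|^{-2}$ (legitimate since ${\rm Im}(\lambda)\ge1$ forces $|\lambda-\lambda_{m,k}|\ge1$), and then use that $|\lambda-\lambda_{1,k}|=|\overline\lambda-\lambda_{1,k}|$ because the $\lambda_{1,k}$ are real; the second factor is thereby $\le c\,\|u_{1,*}\|_{L^2(Y)}$ (the $u_*$ of Lemma \ref{lem:Estim-u-e-zeta} for the potential $V_1$), again bounded by Lemma \ref{lem:Estim-u-e-zeta}. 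This yields $\sum_{k\in{\Bbb K}_0,\,k\ge N}|A_{*k}(\lambda)|\le c\,\delta_N$ uniformly over ${\rm Im}(\lambda)\ge1$, with $c$ depending only on $\omega$ and $M$.

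It then remains to treat the finitely many indices $k\in{\Bbb K}_0$ with $1\le k<N$. For each such fixed $k$ one has $|\lambda_{1,k}-\lambda_{2,k}|\le2M$ and $|\langle\psi_{1,k},e_{\zeta_0}\rangle|\,|\langle e_{*\zeta_1},\psi_{1,k}\rangle|=O(1)$ by \eqref{eq:Behaviour-zeta}, whereas $|\lambda-\lambda_{1,k}|\,|\lambda-\lambda_{2,k}|\ge({\rm Im}\,\lambda)^2\to+\infty$; hence each of these terms tends to $0$ as $|\lambda|\to\infty$, and so does their finite sum. Taking $\limsup_{|\lambda|\to\infty}$ of the total then gives the asserted bound by $c\,\sup_{k\ge N}|\lambda_{1,k}-\lambda_{2,k}|$. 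The only genuine obstacle is the $\lambda_{1,k}$-versus-$\lambda_{2,k}$ asymmetry created by the Cauchy--Schwarz split, which is precisely what \eqref{eq:Rapport-12} is designed to absorb; everything else is a direct application of Lemma \ref{lem:Estim-u-e-zeta}.
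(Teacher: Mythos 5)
Your proof is correct and follows essentially the same route as the paper: split the sum at $N$, bound the tail by $\delta_N$ times $\ell^2$-type sums controlled through Lemma \ref{lem:Estim-u-e-zeta} after using \eqref{eq:Rapport-12} to trade $|\lambda-\lambda_{2,k}|$ for $|\lambda-\lambda_{1,k}|$, and let the finitely many head terms vanish as $|\lambda|\to\infty$. The only (immaterial) difference is that you invoke Cauchy--Schwarz where the paper uses Young's inequality $ab\le\tfrac12(a^2+b^2)$.
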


\begin{proof}
Indeed, setting $\delta_{0} := \delta_{0}(N) := \sup_{k \geq N}|\lambda_{1,k} - \lambda_{2,k}|$, using the very definition of $A_{*k}(\lambda)$, and recalling \eqref{eq:Estim-ecart-vp} and \eqref{eq:Rapport-12}, we observe that
\begin{eqnarray*}
& \displaystyle \sum_{k \geq N}|A_{*k}(\lambda)| &\leq \delta_{0}(N) \,
	\sum_{k \geq N} {|\langle \psi_{1,k},e_{\zeta_{0}}\rangle| \over |\lambda - \lambda_{1,k}|}
	\cdot 
	{|\langle e_{\zeta_{*1}},\psi_{1,k}\rangle| \over |\lambda - \lambda_{2,k}|}\\
	&& \leq {\delta_{0}(N) \over 2} \left(\sum_{k \geq N} {|\langle \psi_{1,k},e_{\zeta_{0}}\rangle|^2 \over |\lambda - \lambda_{1,k}|^2} + 
	\sum_{k \geq N} {|\langle e_{*\zeta_{1}},\psi_{1,k} \rangle|^2 \over |\lambda - \lambda_{2,k}|^2}\right)\\
	&& \leq {\delta_{0}(N) \over 2} \left(\sum_{k \geq N} 
	{|\langle \psi_{1,k},e_{\zeta_{0}}\rangle|^2 \over 
		|\lambda - \lambda_{1,k}|^2} + 
	c \, \sum_{k \geq N} 
	{|\langle e_{*\zeta_{1}},\psi_{1,k} \rangle|^2 \over 
		|\lambda - \lambda_{1,k}|^2}\right) \\
	&& \leq c\, \delta_{0}(N),
\end{eqnarray*}
for some positive constant $c$ independent of $\lambda$, where in the last step we use the fact that if $u$ solves equation \eqref{eq:u-e-zeta} with $\zeta=\zeta_{0}$, and $u_{*}$ solves \eqref{eq:u*-e-zeta} (both equations with $V=V_{1}$), by \eqref{eq:Series-u-lambda} and \eqref{eq:Estim-u-e-zeta}, we have
$$\sum_{k \geq 1} {|\langle \psi_{1,k},e_{\zeta_{0}}\rangle|^2 \over |\lambda - \lambda_{1,k}|^2} 
+ \sum_{k \geq 1} {|\langle e_{*\zeta_{1},\psi_{1,k}}\rangle|^2 \over |\lambda - \lambda_{1,k}|^2}
= \|u\|_{L^2(Y)}^2 + \|u_{*}\|_{L^2(Y)}^2 \leq c.$$
On the other hand, for $N \geq 1$ fixed, it is clear that for some positive constant $c_{1}$ depending on $N$ (but not on $\lambda$) we have
$$\forall\, k \in [1,N]\cap {\Bbb N},\qquad |A_{*k}(\lambda)| \leq {c_{1} \over |\lambda -\lambda_{1,k}|\cdot|\lambda - \lambda_{2,k}|},$$
so that 
$$\lim_{|\lambda|\to \infty}\sum_{k=1}^{N} |A_{*k}(\lambda)| = 0.$$
It follows that
$$\limsup_{|\lambda|\to\infty} \sum_{k\in {\Bbb K}_{0}}|A_{*k}(\lambda)| \leq c\, \delta_{0}(N),$$
and the proof of our lemma is complete.
\end{proof}

Now we turn our attention to the series defined by $B_{*k}(\lambda)$. 

\begin{lemma}\label{lem:Lim-lambda-B*k}
With the above assumptions on $\zeta_{0},\zeta_{1}$ and $V_{1},V_{2}$, assume that  ${\rm Im}(\lambda) \geq 1$ and let $B_{*k}$ be defined by \eqref{eq:Def-B*}.
Moreover assume that 
$$\sum_{k\geq 1}\|\psi_{1,k} - \psi_{2,k}\|_{L^2(\Gamma)}^2  < \infty.$$
Then 
\begin{equation}\label{eq:Limsup-sum-B*k}
\limsup_{|\lambda|\to\infty}\sum_{k \in {\Bbb K}_{0}}
\left|B_{*k}(\lambda)\right| = 0.
\end{equation}
\end{lemma}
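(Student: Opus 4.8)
The plan is to estimate the series $\sum_{k\in\mathbb{K}_0}|B_{*k}(\lambda)|$ directly from the definition of $B_{*k}(\lambda)$ in \eqref{eq:Def-B*}, using the decomposition $g(\psi_{1,k})-g(\psi_{2,k}) = \langle\psi_{1,k}-\psi_{2,k},e_{\zeta_0}\rangle\langle e_{*\zeta_1},\psi_{1,k}\rangle + \langle\psi_{2,k},e_{\zeta_0}\rangle\langle e_{*\zeta_1},\psi_{1,k}-\psi_{2,k}\rangle$ exactly as in Lemma \ref{lem:Estim-2}. This gives, for some constant $c$ depending only on $\omega$ and $M$ (using $\|e_{\zeta_0}\|_{L^2(\Gamma)}+\|e_{*\zeta_1}\|_{L^2(\Gamma)}\leq c$ from Lemma \ref{lem:Behaviour-zeta}),
\begin{equation*}
|B_{*k}(\lambda)| \leq c\,\frac{|\langle e_{*\zeta_1},\psi_{1,k}\rangle| + |\langle\psi_{2,k},e_{\zeta_0}\rangle|}{|\lambda-\lambda_{2,k}|}\,\|\psi_{1,k}-\psi_{2,k}\|_{L^2(\Gamma)}.
\end{equation*}
Then I would apply Cauchy–Schwarz over $k$ together with Young's inequality to split the factor in the numerator, obtaining
\begin{equation*}
\sum_{k\in\mathbb{K}_0}|B_{*k}(\lambda)| \leq c\left(\sum_{k\geq 1}\frac{|\langle e_{*\zeta_1},\psi_{1,k}\rangle|^2 + |\langle\psi_{2,k},e_{\zeta_0}\rangle|^2}{|\lambda-\lambda_{2,k}|^2}\right)^{1/2}\left(\sum_{k\geq 1}\|\psi_{1,k}-\psi_{2,k}\|_{L^2(\Gamma)}^2\right)^{1/2}.
\end{equation*}
The second factor is the finite quantity $\delta_1$ by hypothesis.

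The key point is then to show that the first factor tends to $0$ as $|\lambda|\to\infty$. For this I would use \eqref{eq:Series-u-lambda} of Lemma \ref{lem:Resolution} together with Lemma \ref{lem:Estim-u-e-zeta}: the sums $\sum_k |\langle\psi_{m,k},e_{\zeta_0}\rangle|^2/|\lambda-\lambda_{m,k}|^2$ and $\sum_k |\langle e_{*\zeta_1},\psi_{m,k}\rangle|^2/|\overline\lambda-\lambda_{m,k}|^2$ equal $\|u\|_{L^2(Y)}^2$ and $\|u_*\|_{L^2(Y)}^2$ respectively, where $u$ solves \eqref{eq:u-e-zeta} and $u_*$ solves \eqref{eq:u*-e-zeta}. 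Using $u-e_{\zeta_0} = -R_\lambda(Ve_{\zeta_0})$ (see \eqref{eq:Diff-u-e-zeta}) together with $\|R_\lambda\|\to 0$ and $\|e_{\zeta_0}\|_{L^2(Y)}\leq c$, we actually get that $u\to 0$ in $L^2(Y)$ as $|\lambda|\to\infty$; a symmetric argument applies to $u_*$. Hence both sums $\sum_k |\langle\psi_{1,k},e_{\zeta_0}\rangle|^2/|\lambda-\lambda_{1,k}|^2$ and $\sum_k |\langle e_{*\zeta_1},\psi_{1,k}\rangle|^2/|\lambda-\lambda_{2,k}|^2$ tend to $0$ (for the latter one also uses \eqref{eq:Rapport-12} to swap $\lambda_{2,k}$ for $\lambda_{1,k}$ in the denominator, and that $u_*$ solves an equation with $\overline\lambda$ but $\mathrm{Im}(\lambda)\to+\infty$ so $|\lambda-\lambda_{1,k}|=|\overline\lambda-\lambda_{1,k}|$). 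Replacing $V$ by $V_1$ throughout (it is enough to work with one of the two potentials since $|\langle\psi_{2,k},e_{\zeta_0}\rangle|$ appears with the $V_2$-eigenfunctions, but by the same reasoning applied to $V_2$ that sum also vanishes) yields that the first factor above converges to $0$.

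Combining these two facts gives $\limsup_{|\lambda|\to\infty}\sum_{k\in\mathbb{K}_0}|B_{*k}(\lambda)| \leq c\cdot 0\cdot\delta_1 = 0$, which is \eqref{eq:Limsup-sum-B*k}. I expect the only genuinely delicate point to be bookkeeping with the two potentials: the term $g(\psi_{1,k})-g(\psi_{2,k})$ mixes eigenfunctions of $A_{1,\theta}$ and $A_{2,\theta}$, so one must invoke \eqref{eq:Estim-ecart-vp} and \eqref{eq:Rapport-12} to pass freely between $|\lambda-\lambda_{1,k}|$ and $|\lambda-\lambda_{2,k}|$ in denominators, and make sure that after this the relevant weighted $\ell^2$ sum is one that has already been shown to be $O(1)$ via Lemma \ref{lem:Estim-u-e-zeta} and then, more sharply, $o(1)$ via the decay of $\|u\|_{L^2(Y)}$ and $\|u_*\|_{L^2(Y)}$. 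There is no need to isolate the first $N$ terms here, in contrast with Lemma \ref{lem:Lim-lambda-A*k}, since the whole bound already carries the factor $\delta_1$, which does not depend on $N$, and the weighted sum goes to zero without any truncation.
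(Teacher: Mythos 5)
Your bound on $|B_{*k}(\lambda)|$ via the decomposition of $g(\psi_{1,k})-g(\psi_{2,k})$ and the subsequent Cauchy--Schwarz step are exactly what the paper does. But the proof has a genuine gap at the key step: the weighted $\ell^2$ sum
$$\sum_{k\geq 1}\frac{|\langle e_{*\zeta_{1}},\psi_{1,k}\rangle|^2+|\langle\psi_{2,k},e_{\zeta_{0}}\rangle|^2}{|\lambda-\lambda_{2,k}|^2}$$
does \emph{not} tend to $0$ as $|\lambda|\to\infty$; it is only $O(1)$. The identity \eqref{eq:Diff-u-e-zeta} reads $u-e_{\zeta_{0}}=-R_{\lambda}(Ve_{\zeta_{0}})$, so what $\|R_{\lambda}\|\to 0$ gives you is $\|u-e_{\zeta_{0}}\|_{L^2(Y)}\to 0$, i.e.\ $u\to e_{\zeta_{0}}$, not $u\to 0$. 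Since $|e_{\zeta_{0}}(x)|=\exp(\mathrm{Re}(\zeta_{0})\cdot x)$ with $\mathrm{Re}(\zeta_{0})$ a unit vector (see Lemmas \ref{lem:Larger-than-zero}--\ref{lem:Equal-to-zero}), $\|e_{\zeta_{0}}\|_{L^2(Y)}$ is bounded away from zero, hence so is $\|u\|_{L^2(Y)}=\bigl(\sum_k|\langle\psi_{k},e_{\zeta_{0}}\rangle|^2/|\lambda-\lambda_k|^2\bigr)^{1/2}$ for large $|\lambda|$; the same holds for $u_*$. This is precisely why the paper's Lemma \ref{lem:Estim-u-e-zeta} asserts only a uniform bound $\leq c$ and not decay. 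Your argument therefore only yields $\limsup_{|\lambda|\to\infty}\sum_{k}|B_{*k}(\lambda)|\leq c\,\delta_{1}$, which is not zero.

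The truncation you explicitly dismiss in your last paragraph is in fact indispensable, and it is how the paper closes the argument: given $\epsilon>0$, choose $N_{\epsilon}$ with $\sum_{k\geq N_{\epsilon}+1}\|\psi_{1,k}-\psi_{2,k}\|_{L^2(\Gamma)}^2\leq\epsilon^2$. Your Cauchy--Schwarz estimate, applied only to the tail $k\geq N_{\epsilon}+1$, then gives $\sum_{k\geq N_{\epsilon}+1}|B_{*k}(\lambda)|\leq c\,\epsilon$ uniformly in $\lambda$ (using the $O(1)$ bound on the weighted sum), while the finitely many terms $k\leq N_{\epsilon}$ each satisfy $|B_{*k}(\lambda)|\to 0$ as $|\lambda|\to\infty$ because the denominator $|\lambda_{2,k}-\lambda|$ blows up. Hence $\limsup_{|\lambda|\to\infty}\sum_{k\in{\Bbb K}_{0}}|B_{*k}(\lambda)|\leq c\,\epsilon$ for every $\epsilon>0$, which gives \eqref{eq:Limsup-sum-B*k}. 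Note the contrast with Lemma \ref{lem:Lim-lambda-A*k} is the opposite of what you state: there the $\limsup$ is not zero but $c\,\delta_{0}(N)$, and here it is the smallness of the tail of the convergent series $\sum_k\|\psi_{1,k}-\psi_{2,k}\|^2$ (not any decay of the weighted $\ell^2$ sum) that forces the limit to vanish.
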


\begin{proof}
For a given $\epsilon > 0$, we may fix $N_{\epsilon} \geq 1$ such that
$$\sum_{k \geq N_{\epsilon} +1}\|\psi_{1,k} - \psi_{2,k}\|_{L^2(\Gamma)}^2 \leq \epsilon^2. $$
However, by Lemma \ref{lem:Estim-2} we have
\begin{eqnarray}
& |B_{*k}(\lambda)| &= {1 \over |\lambda_{2,k} - \lambda|}\, \left|g(\psi_{1,k}) - g(\psi_{2,k})\right| \nonumber\\
&& \leq\quad {|\langle e_{*\zeta_{1}},\psi_{1,k}\rangle | + 
| \langle \psi_{2,k},e_{\zeta_{0}}\rangle| \over |\lambda_{2,k} - \lambda|} \, \|\psi_{1,k} - \psi_{2,k}\|_{L^2(\Gamma)}.\label{eq:Estim-B*k-1-0}
\end{eqnarray}
Using the fact that 
$${\left(|\langle e_{*\zeta_{1}},\psi_{1,k}\rangle | + 
| \langle \psi_{2,k},e_{\zeta_{0}}\rangle|\right)^2 \over |\lambda_{2,k} - \lambda|^2} \leq
c \left( {|\langle e_{*\zeta_{1}},\psi_{1,k}\rangle |^2
\over |\lambda_{1,k} - \lambda|^2} + 
{|\langle \psi_{2,k},e_{\zeta_{0}}\rangle |^2
\over |\lambda_{2,k} - \lambda|^2}\right),$$
invoking once more Lemma \ref{lem:Estim-u-e-zeta} and \eqref{eq:Estim-u-e-zeta}, we infer that 
$$\sum_{k \geq 1} {\left(|\langle e_{*\zeta_{1}},\psi_{1,k}\rangle | + 
| \langle \psi_{2,k},e_{\zeta_{0}}\rangle|\right)^2 \over |\lambda_{2,k} - \lambda|^2} \leq c$$
for some positive constant independent of $\lambda$. Consequently, using \eqref{eq:Estim-B*k-1-0} and the Cauchy-Schwarz' inequality in $\ell^2$, we have
$$\sum_{k \geq N_{\epsilon}+1}|B_{*k}{\lambda}| \leq c\, \left(\sum_{k \geq N_{\epsilon} +1}\|\psi_{1,k} - \psi_{2,k}\|_{L^2(\Gamma)}^2\right)^{1/2} \leq c\, \epsilon.$$
On the other hand, as we argued above in the study of $\sum_{k=1}^N |A_{*k}(\lambda)|$, it is not difficult to see that
$$\lim_{|\lambda|\to\infty}\sum_{k=1}^{N_{\epsilon}} |B_{*k}(\lambda)| = 0,$$
so that finally we get
$$\limsup_{|\lambda|\to\infty} \sum_{k\in {\Bbb K}_{0}}|B_{*k}(\lambda)| \leq c\, \epsilon,$$
and $\epsilon >0$ being arbitrary, this yields
$\limsup_{|\lambda|\to\infty} \sum_{k\in {\Bbb K}_{0}}|B_{*k}(\lambda)| = 0$, so that the proof of the lemma is complete.
\end{proof}
\medskip

Now we are in a position to develop the proof of our main results. We begin with that of Theorem \ref{thm:Uniqueness-QP}.


\subsection{Proof of Theorems \ref{thm:Uniqueness-QP} and \ref{thm:Uniqueness}}\label{subsec:Dem-thm:Uniqueness-QP}

It is clear that Theorem \ref{thm:Uniqueness} is nothing but a rewriting of Theorem \ref{thm:Uniqueness-QP} in the context of an infinite waveguide. We can therefore develop only the proof of Theorem \ref{thm:Uniqueness-QP}. 

According to the assumptions of this theorem, the set ${\Bbb K}_{0}$ defined in \eqref{eq:Def-K-0} is finite (in fact ${\rm card}({\Bbb K}_{0}) \leq N$).
Thus, using \eqref{eq:Id-lambda}, the sum in \eqref{eq:Def-F-*} being finite in this case, we may take the limit as $|\lambda|\to \infty$ to obtain
$$\lim_{|\lambda|\to\infty} \left(S_{\theta,V_{1}}(\lambda,\zeta_{0},\zeta_{1}) - S_{\theta,V_{2}}(\lambda,\zeta_{0},\zeta_{1})\right) = 0,
$$
so that \eqref{eq:V-hat-and-S1-S2} implies that $V_{1} \equiv V_{2}$, and the proof of Theorem \ref{thm:Uniqueness-QP} is complete. \qed


\subsection{Proof of Theorem \ref{thm:Stability}}\label{subsec:Dem-thm:Stability}

To begin with, let us recall that if we assume that \eqref{eq:Ecart-ell-2-psi} is satisfied and moreover
$$\lim_{k\to\infty}|\lambda_{1,k} - \lambda_{2,k}| \to 0,$$
then as $N \to \infty$ we have have $\sup_{k\geq N}|\lambda_{1,k} - \lambda_{2,k}| \to 0$, and therefore  Theorem \ref{thm:Uniqueness-Asymptotic} is an easy consequence of Theorem \ref{thm:Stability}. We can therefore concentrate on the proof of this latter result.

\medskip

In order to explain the various steps we are going to take for the proof of Theorem \ref{thm:Stability}, we begin by noting that if $V := (V_{1} - V_{2})1_{Y}$, and if we set for a moment $y := (\xi',-j)$ and $z := {\rm i}(\zeta_{0} + \zeta_{1})$, then we have
$$|\widehat{V}(y) - \widehat{V}(z)| \leq |y - z| \, \sup_{0 \leq \tau \leq 1}|\nabla \widehat{V}((1-\tau)y + \tau z)|. $$
Now recall that by Lemma \ref{lem:Behaviour-zeta} we have $|y - z| = O(|\lambda|^{-1/2})$ as $|\lambda| \to +\infty$, and thus, if $R > 1$ is given, for $|\xi'|^2 + j^2 \leq R^2$, we have 
$$\sup_{0 \leq \tau \leq 1}|\nabla \widehat{V}((1-\tau)y + \tau z)| \leq 2R\, \|V\|_{L^1(Y)} \leq 8R\pi\, {\rm meas}(\omega)\, M , $$
we conclude that for some constant $c_{1} > 0$ depending on $\omega$, $M$ and $R$ we have
\begin{equation}\label{eq:Estim-V-hat}
|\widehat{V}(\xi',-j)| \leq |\widehat{V}({\rm i}(\zeta_{0}+\zeta_{1}))| + c_{1}\,|\lambda|^{-1/2}.
\end{equation}
Therefore, in order to prove the estimate \eqref{eq:Estim-stability} we need to analyze the behaviour of $|\widehat{V}({\rm i}(\zeta_{0}+\zeta_{1}))|$ as $|\lambda|\to \infty$. Now, thanks to the representation formula \eqref{eq:Id-Poinca-Steklov}, we have
\begin{equation}\label{eq:Estim-V-hat-1}
|\widehat{V}({\rm i}(\zeta_{0}+\zeta_{1}))| \leq |S_{\theta,V_{1}}(\lambda,e_{\zeta_{0}},e_{\zeta_{1}}) - S_{\theta,V_{2}}(\lambda,e_{\zeta_{0}},e_{\zeta_{1}})| + c_{2}\,|\lambda|^{-1/2},
\end{equation}
where we have used on the one hand the fact that when $\|V_{m}\|_{\infty} \leq M$, then for some positive constant $c_{3}$
$$\left|\int_{Y}R_{\lambda}(V_{m}e_{\zeta_{0}})(x)V_{m}(x) e_{\zeta_{1}}(x)\,dx \right| \leq c_{3}\, \|(A_{m} - \lambda I)^{-1}\|_{L^2 \to L^2} = O(|{\rm Im}(\lambda)|^{-1}) ,$$
and on the other hand the fact that by Lemma \ref{lem:Behaviour-zeta} we have $O(|{\rm Im}(\lambda)|^{-1}) = O(|\lambda|^{-1/2})$.

Reporting \eqref{eq:Estim-V-hat-1} into \eqref{eq:Estim-V-hat}, we see that
\begin{equation}\label{eq:Estim-V-hat-2}
|\widehat{V}(\xi',-j)| \leq |S_{\theta,V_{1}}(\lambda,e_{\zeta_{0}},e_{\zeta_{1}}) - S_{\theta,V_{2}}(\lambda,e_{\zeta_{0}},e_{\zeta_{1}})| + c_{4}\,|\lambda|^{-1/2},
\end{equation}
and again we have to analyze the behaviour of $|S_{\theta,V_{1}}(\lambda,e_{\zeta_{0}},e_{\zeta_{1}}) - S_{\theta,V_{2}}(\lambda,e_{\zeta_{0}},e_{\zeta_{1}})|$ as $|\lambda|\to +\infty$.

According to \eqref{eq:Id-lambda-mu}, with the choice $f:= e_{\zeta_{0}}$ we have
\begin{equation*}
\left\langle e_{*\zeta_{1}},{\partial u_{1,\lambda} \over \partial{\bf n}} - 
{\partial u_{2,\lambda} \over \partial{\bf n}}\right\rangle = 
\left\langle e_{*\zeta_{1}},{\partial z_{\mu} \over \partial{\bf n}} \right\rangle
+ \langle e_{*\zeta_{1}}, F_{1}(\lambda,\mu,e_{\zeta_{0}}) - F_{2}(\lambda,\mu,e_{\zeta_{0}}) \rangle,
\end{equation*}
and by Definition \ref{def:S-V} of $S_{\theta,V_{m}}(\lambda,e_{\zeta_{0}},e_{\zeta_{1}})$
\begin{equation*}
\left\langle e_{*\zeta_{1}},{\partial u_{1,\lambda} \over \partial{\bf n}} - 
{\partial u_{2,\lambda} \over \partial{\bf n}}\right\rangle =
S_{\theta,V_{1}}(\lambda,e_{\zeta_{0}},e_{\zeta_{1}}) - S_{\theta,V_{2}}(\lambda,e_{\zeta_{0}},e_{\zeta_{1}}).
\end{equation*}
Therefore, in order to deduce the result of Theorem \ref{thm:Stability} from \eqref{eq:Estim-V-hat-2}, we need to analyze the difference given by \eqref{eq:Diff-F1-F2}
as $\mu\to -\infty$ and $|\lambda|\to + \infty$. 

First, using Lemma \ref{lem:Lim-mu-F1-F2} together with \eqref{eq:Estim-V-hat-2} we may state the following

\begin{lemma}\label{lem:Lim-mu-F1-F2-bis}
With our assumption on $M,\lambda,\zeta_{0},\zeta_{1}$, as well as on $V_{1},V_{2}$,  assume that  ${\rm Im}(\lambda) \geq 1$ and that we have
$$\sum_{k\geq 1}\|\psi_{1,k} - \psi_{2,k}\|_{L^2(\Gamma)}^2 < \infty.$$
Then 
$$\sum_{k \in {\Bbb K}_{0}}
\left| 
{\langle \psi_{1,k}, e_{\zeta_{0}}\rangle\, \langle e_{*\zeta_{1}},\psi_{1,k} \rangle  \over \lambda - \lambda_{1,k}}
- {\langle \psi_{2,k},e_{\zeta_{0}}\rangle\, \langle e_{*\zeta_{1}},\psi_{2,k} \rangle  \over \lambda - \lambda_{2,k}}\right| < \infty\, ,$$
and for a constant $c_{*}$ depending on $\omega$, $M$, and $R$, for all $(\xi',j)$ such that $|\xi'|^2 + j^2 \leq R^2$ we have
\begin{equation}\label{eq:Estim-V-hat-3} 
|\widehat{V}(\xi',-j)| \leq \sum_{k\in {\Bbb K}_{0}}\left| 
{\langle \psi_{1,k}, e_{\zeta_{0}}\rangle\, \langle e_{*\zeta_{1}},\psi_{1,k} \rangle  \over \lambda_{1,k} - \lambda} 
- {\langle \psi_{2,k},e_{\zeta_{0}}\rangle\, \langle e_{*\zeta_{1}},\psi_{2,k} \rangle  \over \lambda_{2,k} - \lambda}\right| + c_{*}\,|\lambda|^{-1/2}.
\end{equation}

\end{lemma}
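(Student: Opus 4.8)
The plan is to assemble pieces that are already in place: the estimate \eqref{eq:Estim-V-hat-2}, the identity \eqref{eq:Id-lambda-mu}, and Lemmas~\ref{lem:z-mu} and~\ref{lem:Lim-mu-F1-F2}. Since \eqref{eq:Estim-V-hat-2} has already been established for $(\xi',j)$ with $|\xi'|^2+j^2\le R^2$, with a constant $c_4$ depending only on $\omega,M,R$, it remains to identify $S_{\theta,V_1}(\lambda,e_{\zeta_0},e_{\zeta_1})-S_{\theta,V_2}(\lambda,e_{\zeta_0},e_{\zeta_1})$ with the series over ${\Bbb K}_0$ and then take moduli.

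First I would fix $\lambda,\zeta_0,\zeta_1$ as in Theorem~\ref{thm:Isozaki} with ${\rm Im}(\lambda)\ge 1$, set $f:=e_{\zeta_0}$ in \eqref{eq:Id-lambda-mu}, and pair both sides with $e_{*\zeta_1}\in H^1_\theta(Y)$. Since $u_{m,\lambda}$ solves \eqref{eq:u-m} with boundary data $e_{\zeta_0}$, we have $\partial u_{m,\lambda}/\partial{\bf n}=\Lambda_{\theta,V_m-\lambda}(e_{\zeta_0})$, so Definition~\ref{def:S-V} gives $\langle e_{*\zeta_1},\partial u_{m,\lambda}/\partial{\bf n}\rangle = S_{\theta,V_m}(\lambda,e_{\zeta_0},e_{\zeta_1})$, and \eqref{eq:Id-lambda-mu} becomes, for every admissible $\mu$,
\[
S_{\theta,V_1}(\lambda,e_{\zeta_0},e_{\zeta_1})-S_{\theta,V_2}(\lambda,e_{\zeta_0},e_{\zeta_1})
=\left\langle e_{*\zeta_1},{\partial z_{\mu} \over \partial{\bf n}}\right\rangle
+\langle e_{*\zeta_1},F_1(\lambda,\mu,e_{\zeta_0})-F_2(\lambda,\mu,e_{\zeta_0})\rangle .
\]
The left-hand side is independent of $\mu$, so I may let $\mu\to-\infty$ on the right. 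By Lemma~\ref{lem:z-mu}, $\partial z_\mu/\partial{\bf n}\to 0$ in $L^2(\Gamma)$; since $\|e_{*\zeta_1}\|_{L^2(\Gamma)}=O(1)$ by Lemma~\ref{lem:Behaviour-zeta}, the first term on the right tends to $0$. The hypothesis $\sum_k\|\psi_{1,k}-\psi_{2,k}\|^2_{L^2(\Gamma)}<\infty$ is exactly what Lemma~\ref{lem:Lim-mu-F1-F2} requires, and that lemma supplies both the absolute convergence of the series in \eqref{eq:Diff-F1-F2} (which is the first displayed assertion of the present lemma) and its limit \eqref{eq:Lim-mu-F1-F2}. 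Passing to the limit therefore yields
\[
S_{\theta,V_1}(\lambda,e_{\zeta_0},e_{\zeta_1})-S_{\theta,V_2}(\lambda,e_{\zeta_0},e_{\zeta_1})
=\sum_{k\in{\Bbb K}_0}\left[\frac{\langle\psi_{1,k},e_{\zeta_0}\rangle\,\langle e_{*\zeta_1},\psi_{1,k}\rangle}{\lambda_{1,k}-\lambda}-\frac{\langle\psi_{2,k},e_{\zeta_0}\rangle\,\langle e_{*\zeta_1},\psi_{2,k}\rangle}{\lambda_{2,k}-\lambda}\right].
\]
Taking absolute values, bounding $|\sum_{k\in{\Bbb K}_0}[\cdots]|\le\sum_{k\in{\Bbb K}_0}|[\cdots]|$, and substituting into \eqref{eq:Estim-V-hat-2} produces \eqref{eq:Estim-V-hat-3} with $c_*=c_4$.

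I do not anticipate a genuine obstacle; the proof is an assembly of Lemmas~\ref{lem:z-mu}, \ref{lem:Behaviour-zeta}, \ref{lem:Lim-mu-F1-F2} together with the chain of inequalities leading to \eqref{eq:Estim-V-hat-2}. The single delicate point is the ordering of the two limiting regimes: the limit $\mu\to-\infty$ is taken with $\lambda$ (hence $\zeta_0,\zeta_1$ and $(\xi',j)$) held fixed — it is Lemma~\ref{lem:Lim-mu-F1-F2}, proved via dominated convergence, that justifies it — while the asymptotics as $|\lambda|\to\infty$ are invoked only afterwards, in Lemmas~\ref{lem:Lim-lambda-A*k}--\ref{lem:Lim-lambda-B*k}. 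One must also keep in mind that in the stability setting ${\Bbb K}_0$ need not be finite, so the expression over ${\Bbb K}_0$ is a genuine series, and it is precisely the absolute convergence supplied by Lemma~\ref{lem:Lim-mu-F1-F2} that makes the above manipulations and the final bound legitimate.
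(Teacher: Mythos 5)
Your proposal is correct and follows the paper's own route essentially verbatim: pair \eqref{eq:Id-lambda-mu} (with $f=e_{\zeta_0}$) against $e_{*\zeta_1}$, identify the left-hand side with $S_{\theta,V_1}-S_{\theta,V_2}$ via Definition \ref{def:S-V}, let $\mu\to-\infty$ using Lemma \ref{lem:z-mu} for the $z_\mu$ term and Lemma \ref{lem:Lim-mu-F1-F2} for the $F_1-F_2$ term, and then insert the resulting series bound into \eqref{eq:Estim-V-hat-2}. Your closing remarks on the ordering of the limits $\mu\to-\infty$ versus $|\lambda|\to\infty$ and on ${\Bbb K}_0$ possibly being infinite are exactly the points the paper relies on.
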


\smallskip
Now we are in a position to prove Theorem \ref{thm:Stability}:
\medskip

\noindent{\bf Proof of Theorem \ref{thm:Stability} concluded}

\smallskip
\noindent With the notations introduced in \eqref{eq:Def-A*} and \eqref{eq:Def-B*}, the estimate \eqref{eq:Estim-V-hat-3} may be rewritten as
\begin{eqnarray}
&|\widehat{V}(\xi',-j)| &\leq \sum_{k\in {\Bbb K}_{0}}\left| 
A_{*k}(\lambda) + B_{*k}(\lambda)\right| + c\, |\lambda|^{-1/2} \nonumber\\
&& \leq \sum_{k\in {\Bbb K}_{0}}\left| 
A_{*k}(\lambda)\right| + \left| B_{*k}(\lambda)\right| + c\,|\lambda|^{-1/2}. \label{eq:Estim-V-hat-4}
\end{eqnarray}
Now, thanks to Lemma \ref{lem:Lim-lambda-B*k} we know that
$$\lim_{|\lambda|\to\infty} \sum_{k\in{\Bbb K}_{0}} |B_{*k}(\lambda)| = 0,$$
and by Lemma \ref{lem:Lim-lambda-A*k} we have, with the constant $c > 0$ depending only on $\omega$ and $M$, 
$$\limsup_{|\lambda|\to\infty} \sum_{k\in{\Bbb K}_{0}} |A_{*k}(\lambda)| \leq c\, \sup_{k \geq N}|\lambda_{1,k} - \lambda_{2,k}|. $$
Therefore $|\widehat{V}(\xi',-j)|\leq c\, \sup_{k \geq N}|\lambda_{1,k} - \lambda_{2,k}|$ as claimed.
\qed


\bigskip

\end{document}